\newtheorem{thm}{Theorem}[section]
\newtheorem{prop}{Proposition}[section]
\newtheorem{lem}{Lemma}[section]
\newtheorem{rem}{Remark}[section]
\newtheorem{claim}{Claim}[section]
\numberwithin{equation}{section}
\begin{document}
\title{The graph limit for a pairwise competition model}

\author{
Immanuel Ben-Porat%
     \thanks{Mathematical Institute, University of Oxford, Oxford OX2 6GG, UK.  {Immanuel.BenPorat@maths.ox.ac.uk}} %
\and
    Jos\'e A. Carrillo%
     \thanks{Mathematical Institute, University of Oxford, Oxford OX2 6GG, UK.  {mailto:carrillo@maths.ox.ac.uk}} %
     \and
    Pierre-Emmanuel Jabin%
     \thanks{Pennsylvania State University 109 McAllister University Park, PA 16802 US  {pejabin@psu.edu}{pejabin@psu.edu}} %
}

\maketitle

\begin{abstract}
This paper is aimed at extending the graph limit with time dependent
weights obtained in \cite{1} for the case of a pairwise competition
model introduced in \cite{10}, in which the equation governing the
weights involves a weak singularity at the origin. Well posedness
for the graph limit equation associated with the ODE system of the
pairwise competition model is also proved. 
\end{abstract}

\section{Introduction }

\textbf{General Background}. In this work, we are concerned with analyzing
the graph limit of the following system of $(d+1)N$ ODEs 

\begin{equation}
\left\{ \begin{array}{lc}
\dot{x_{i}}^{N}(t)=\frac{1}{N}\stackrel[j=1]{N}{\sum}m_{j}^{N}(t)\mathbf{a}(x_{j}^N(t)-x_{i}^N(t)),&\ x_{i}^{N}(0)=x_{i}^{0,N}\\
\dot{m}_{i}^{N}(t)=\psi_{i}^{N}(\mathbf{x}_{N}(t),\mathbf{m}_{N}(t)),&\ m_{i}^{N}(0)=m_{i}^{0,N}.
\end{array}\right.\label{eq:OPINION DYNAMICS}
\end{equation}
The notation is as follows: the unknowns are $x_{i}^N\in\mathbb{R}^{d}$
and $m_{i}^N\in\mathbb{R}$ are referred to as the opinions and weights
respectively. The evolution of the opinions is given in terms of the
weights and a function $\mathbf{a}:\mathbb{R}^{d}\rightarrow\mathbb{R}^{d}$
which is called the influence. The evolution of the weights is given
by means of functions $\psi_{i}^{N}:\mathbb{R}^{dN}\times\mathbb{R}^{N}\rightarrow\mathbb{R}$
where we apply the notation 

\[
\mathbf{x}_{N}(t)\coloneqq(x_{1}^{N}(t),...,x_{N}^{N}(t)),\ \mathbf{m}_{N}(t)\coloneqq(m_{1}^{N}(t),...,m_{N}^{N}(t)).
\]
This model has been proposed in \cite{10}, along with several other
models which are meant to idealize social dynamics. We refer to \cite{10,13}
for more details of how these models originate from biology and social
sciences. Mathematically, the
system (\ref{eq:OPINION DYNAMICS}) is a weighted version of the first
order $N-$body problem (simply by taking all the weights to be identically
equal to $1$). By
now, the mean field limit of the $N-$body problem 
\begin{equation}
\dot{x_{i}}^{N}(t)=\frac{1}{N}\stackrel[j=1]{N}{\sum}\mathbf{a}(x_{j}^N(t)-x_{i}^N(t)),\ x_{i}^{N}(0)=x_{i}^{0,N}\label{N body problem}
\end{equation}
is fairly well understood even for influence functions with strong singularities
at the origin \cite{14}. The mean field limit can be analysed in terms of the empirical measure defined by 
\[
\mu_{N}(t)\coloneqq\frac{1}{N}\stackrel[i=1]{N}{\sum}\delta_{x_{i}^N(t)}.
\]
Thanks to the work of Dobrushin \cite{4} it is possible to prove
quantitative convergence of $\mu_{N}(t)$ to the solution $\mu$ of
the (velocity free) Vlasov equation 

\begin{equation}
\partial_{t}\mu(t,x)-\mathrm{div}(\mu\mathbf{a}\star\mu)(t,x)=0,\ \mu(0,\cdot)=\mu^{0}\label{eq:homogenous transport}
\end{equation}
 with respect to the Wasserstein metric (provided this is true initially
of course). The mean field limit with time dependent weights has been
investigated in \cite{1,5,6} for Lipschitz continuous
interactions and $\psi_{i}^{N}$ which are
at least Lipschitz in each variable, and more recently in \cite{2} 
for the case of the 1D attractive Coulomb interaction (but still with
$\psi_{i}^{N}$ regular enough). There is a different regime, the so called graph limit, closely related to the mean-field limit. In the graph limit, we pass from a discrete system
of ODEs to a ``continuous'' system in the following sense: we associate
to $\mathbf{x}_{N}(t),\mathbf{m}_{N}(t)$ the following Riemman sums
$\widetilde{x}_{N}:[0,T]\times I\rightarrow\mathbb{R},\widetilde{m}_{N}:[0,T]\times I\rightarrow\mathbb{R}$
defined by 

\[
\widetilde{x}_{N}(t,s)\coloneqq\stackrel[i=1]{N}{\sum}x_{i}(t)\mathbf{1}_{[\frac{i-1}{N},\frac{i}{N}]}(s),\ \widetilde{m}_{N}(t,s)\coloneqq\stackrel[i=1]{N}{\sum}m_{i}(t)\mathbf{1}_{[\frac{i-1}{N},\frac{i}{N}]}(s).
\]
Using the equation for the trajectories of the opinions and weights,
one easily finds that $\widetilde{x}_{N},\widetilde{m}_{N}$ are governed
by the following equations
\[
\left\{ \begin{array}{lc}
\displaystyle
\partial_{t}\widetilde{x}_{N}(t,s)=\int_{I}\widetilde{m}_{N}(t,s_{\ast})\mathbf{a}(\widetilde{x}_{N}(t,s_{\ast})-\widetilde{x}_{N}(t,s))ds_{\ast}, & \widetilde{x}_{N}(0,s)=\widetilde{x}_{N}^{0}(s)\\
\displaystyle
\partial_{t}\widetilde{m}_{N}(t,s)=N\int_{\frac1N \lfloor sN\rfloor}^{\frac1N (\lfloor sN\rfloor +1)}
\psi(s_{\ast},\widetilde{x}_{N}(t,\cdot),\widetilde{m}_{N}(t,\cdot))ds_{\ast}, & \widetilde{m}_{N}(0,s)=\widetilde{m}_{N}^{0}(s).
\end{array}\right.
\]
Lebesgue differentiation theorem leads us formally to the following
integro-differential equation
\begin{equation}
\left\{ \begin{array}{lc}
\displaystyle
\partial_{t}x(t,s)=\int_{I} m(t,s_{\ast})\mathbf{a}(x(t,s_{\ast})-x(t,s))ds_{\ast}, & x(0,s)=x^{0}(s)\\
\partial_{t}m(t,s)=\Psi(s,x(t,\cdot),m(t,\cdot)), & m(0,s)=m^{0}(s).
\end{array}\right.\label{eq:integro differential system intro}
\end{equation}
Here $\Psi:I\times L^{\infty}(\mathbb{R}^{d})\times L^{\infty}(\mathbb{R}^{d})\rightarrow\mathbb{R}$
is a functional whose relation to $\psi_{i}^{N}$ is given by the
formula (\ref{eq:formula for psi}) in the next section. The formula
relating $x^{0}(s),m^{0}(s)$ to $\widetilde{x}_{N}^{0}(s),\widetilde{m}_{N}^{0}(s)$
will be given in the next section as well (formula (\ref{initial data})).
Hence, one expects that the sums $\widetilde{x}_{N}(t,s),\widetilde{m}_{N}(t,s)$
are an approximation of the solution $(x(t,s),m(t,s))$ of the Equation
(\ref{eq:integro differential system intro}) . 

Before going further, let us briefly comment on the origin of the
terminology ``graph limit''. This name stems from the fact that
the system (\ref{eq:OPINION DYNAMICS}) can be viewed as a nonlinear
heat equation on a graph. For example, in the case where the weights
are time independent and the $\mathbf{a}$ is taken to be the identity,
then the system (\ref{eq:OPINION DYNAMICS}) can be rewritten as the
linear heat equation with respect to the Laplacian associated to the
underlying simple graph. This is the point of view which has been
taken in \cite{11}. However, this underlying combinatorial structure
seems to come into play mostly when the weights may vary from one
opinion to another, in which case methods from graph theory prove
as highly useful. We also refer to the more recent work \cite{9} for a demonstration of the power of graph theory techniques in the context of the mean field limit, and \cite{3} in the context of convergence to consensus
for the graph limit equation. See also \cite{12} for a proof of the graph limit for metric valued labels, alongside an extensive explanation of the relation between the graph limit and the hydrodynamic and mean field limits.  In our settings, which are very similar
to the framework in \cite{1}, this graph structure is not as relevant,
and we shall therefore not dwell on this matter. It is instructive
to view the system (\ref{eq:integro differential system intro}) as
continuous version of (\ref{eq:OPINION DYNAMICS}), in the sense that
it is obtained by replacing averaged sums by integrals on the unit
interval and summation indices by variables in the unit interval.

\medskip{}

\textbf{Relevant Literature and Contribution of the Present work}.
It appears that the graph limit point of view has not received as
much attention as the mean-field limit. The study of this problem
was initiated in \cite{11}, which as already remarked, considers
time independent weights which may depend on the index of the opinion
as well. This result has been extended in \cite{1} to cover time
dependent weights (although in \cite{1} the weights depend only on
the summation index). The evolution in time of the weights renders difficult
the problem both at the microscopic and graph limit level- since the
corresponding ODE/integro-differential equation become coupled (compare
for instance Equations \eqref{eq:OPINION DYNAMICS} and \eqref{N body problem}),
and at the macroscopic level- since the mean field PDE includes a
non-local source term (see Section 4 for more details). In both of
these results, the functions $\psi_{i}^{N}$ are assumed to be well
behaved in terms of regularity.
On the other hand, models corresponding to scenarios
where the functions $\psi_{i}^{N}$ exhibit singularities recently received attention in \cite{10}. For instance, the following ODE has been
studied in \cite{10}: 
\begin{equation}
\left\{ \begin{array}{lc}
\dot{x}_{i}^{N}(t)=\frac{1}{N}\stackrel[j=1]{N}{\sum}m_{j}^{N}(t)\mathbf{a}(x_{j}^{N}(t)-x_{i}^{N}(t)), & x_{i}^{N}(0)=x_{i}^{0,N}\\
\dot{m}_{i}^{N}(t)=\frac{1}{N}\stackrel[j=1]{N}{\sum}m_{i}^{N}(t)m_{j}^{N}(t)\left\langle \frac{\dot{x}_{i}^{N}(t)+\dot{x}_{j}^{N}(t)}{2},\mathbf{s}(x_{i}^{N}(t)-x_{j}^{N}(t))\right\rangle, & \ m_{i}^{N}(0)=m_{i}^{0,N},
\end{array}\right. \label{eq:pairwise comp}
\end{equation}
where $\mathbf{a}:\mathbb{R}^{d}\rightarrow\mathbb{R}^{d}$ is Lipschitz
and takes the form $\mathbf{a}(x)=a(\left|x\right|)x$ for some radial
$a:\mathbb{R}\rightarrow\mathbb{R}$, and $\mathbf{s}:\mathbb{R}^{d}\rightarrow\mathbb{S}^{d-1}$
is the projection on the unit sphere, i.e. 
\begin{equation}
\mathbf{s}(x)\coloneqq\left\{ \begin{array}{c}
\frac{x}{\left\Vert x\right\Vert },\ x\neq0\\
0,\ x=0.
\end{array}\right.\label{eq:definition of s}
\end{equation}
Of course, inserting the equation for $x_{i}$ into the equation for
$m_{i}$ transfers the system to the form (\ref{eq:OPINION DYNAMICS}).
System (\ref{eq:pairwise comp}) is referred to as a pairwise
competition model in \cite{10}, and its well posedness can be proved
provided opinions are separated initially ($i\neq j\Longrightarrow x_{i}^{0}\neq x_{j}^{0}$).
It is the aim of this work to investigate how to overcome the challenges
created due to the singularity in the weight function in the context of the graph limit.
The problem of the graph limit for singularities in the influence function is also interesting. As already remarked, for the mean field limit this has been successfully achieved in \cite{2} for the 1D attractive Coulomb case. However, it is not clear how to study the graph limit regime in this whole generality. The 1D repulsive Coulomb interaction however is manageable, and can be handled by similar methods to the one demonstrated in
the present work. 

A first contribution of the present work is reflected on two levels, both of which are considered in 1D: the well posedness of the  graph limit equation (\ref{eq:integro differential system intro}), and the derivation of  (\ref{eq:integro differential system intro}) from the opinion dynamics (\ref{eq:OPINION DYNAMICS}) in the limit as $N\rightarrow\infty$. As for the first point, we
note that in the case when $\mathbf{a}$ and $\psi_{i}^{N}$ are well behaved then equation (\ref{eq:integro differential system intro}) can be viewed as a Banach valued ODE, and noting that at each time $t$ our unknowns $(x(t,\cdot),m(t,\cdot))$ are functions of the variable $s$, and therefore there is a straightforward analogy between the well posedness of the discrete System (\ref{eq:OPINION DYNAMICS})
and equation (\ref{eq:integro differential system intro}). As
already mentioned, the global well-posedness of the finite dimensional
version of Equation (\ref{eq:integro differential system intro}),
namely System (\ref{eq:pairwise comp}) has been (among other things)
proved in \cite{10} using the theory of differential inclusions as
developed by Fillipov \cite{7}. Originally, Fillipov formulated his
theory for unknowns taking values in a finite dimensional space in
contrast to Equation (\ref{eq:integro differential system intro}).
We follow a slightly different route which is in fact more elementary and does not require any familiarity with convex analysis. A second contribution of the present work, is studying the graph limit in arbitrary dimensions $d>1$. In higher dimensions, a natural assumption to impose on the initial datum $x^0$ is that it is bi-Lipschitz in $s$ - an assumption of this type is strictly stronger from what is needed in 1D. This in turn leads to considering Riemann sums whose labeling variable $s$ varies on the $d$-dimensional unit cube rather on the unit interval, because cubes of different dimensions cannot be diffeomorphic. This labelling procedure does not have any modelling interpretation since particles (opinions) are still exchangeable or indistinguishable. It would in fact be possible to still work on the unit interval through a change of variable on the labeling variable, since all cubes (and most measurable spaces that one may use) are isomorphic to the unit interval per the Borel isomorphism theorem. However the corresponding analysis would be far more convoluted, and instead having the labeling variable on the $d$-dimensional unit cube make the various technical steps more transparent. These considerations are therefore detailed separately in Section \ref{Section 5}. For both points
it is crucial to observe the lower bound $\left|x(t,s_{2})-x(t,s_{1})\right|\gtrsim\left|x^{0}(s_{2})-x^{0}(s_{1})\right|$. In 1D the initial separation at the continuous
level will be replaced by the assumption that $x^{0}$ is increasing, whereas in higher dimensions this assumption will be replaced by requiring that $x^0$ is bi-Lipschitz. Finally we remark that the method here extends the case of the main results in \cite{1}, in the sense that it simultaneously covers functions $\mathbf{s}$ which are either Lipschitz or have a jump discontinuity at the origin. This last observation is simple but not obvious- for example in the case where the singularity emerges from the influence part, as mentioned earlier, it is not clear how to unify both results. 
\medskip{}

We organize the paper as follows: Section \ref{sec:Preliminaries}
reviews the terminology introduced in \cite{1} in the specific context
of system (\ref{eq:pairwise comp}). In particular, Section \ref{sec:Preliminaries}
includes preliminaries such as the existence and uniqueness of classical
solutions to the system (\ref{eq:OPINION DYNAMICS}) in the present
settings and other basic properties of solutions (of course, uniqueness
is not strictly needed for the purpose of the graph or mean field
limit). Section \ref{sec:Well-Posedness-for-1} is a continuous adaptation
of section \ref{sec:Preliminaries}, namely well posedness for the 1D graph limit equation for which uniqueness is essential. Section \ref{sec:The-Gronwall-Estimate}
includes the main evolution estimate leading to the 1D graph limit, and clarifies the link between the mean field and the graph limit. In Section \ref{Section 5} we introduce multi-dimensional Riemann sums and study the graph limit for arbitrary $d>1$.    

\section{\label{sec:Preliminaries}Preliminaries }

\subsection{The ODE system. }

Recall that the system which will occupy us is 
\begin{equation}
\left\{ \begin{array}{lc}
\dot{x}_{i}^{N}(t)=\frac{1}{N}\stackrel[j=1]{N}{\sum}m_{j}(t)\mathbf{a}(x_{j}(t)-x_{i}(t)) & x_{i}(0)=x_{i}^{0,N}\\
\dot{m}_{i}^{N}(t)=\psi_{i}^{N}(\mathbf{x}_{N}(t),\mathbf{m}_{N}(t)) & m_{i}(0)=m_{i}^{0,N}
\end{array}\right.\label{opinion dynamics sec2}
\end{equation}
where 

\begin{equation}
\psi_{i}^{N}(\mathbf{x}_{N},\mathbf{m}_{N})\coloneqq\frac{1}{2N^{2}}m_{i}\underset{j,k}{\sum}m_{j}m_{k}\left(\mathbf{a}(x_{k}-x_{i})+\mathbf{a}(x_{k}-x_{j})\right)\mathbf{s}(x_{i}-x_{j}).\label{FORMULA FOR PSIN}
\end{equation}
When $d=1$, which is the case of main interest here, we note that $\mathbf{s}$ identifies with the sign function. We start by reviewing the well-posedness theory which has been established
for the System (\ref{opinion dynamics sec2}) in \cite{10}. As usual
with ODEs with weakly singular right hand sides, the argument in \cite{10}
rests on the theory of differential inclusions as developed by
Fillipov \cite{7} and the fact that opinions remain separated for
all times provided this is true initially. Unless necessary, we omit the super index $N$ in the opinions and weights. 
\begin{prop}
\textup{\label{well posedness of opinion dynamics } (\cite[Proposition
3]{10} } Suppose $\mathbf{a}:\mathbb{R}^{d}\rightarrow\mathbb{R}^{d}$ is Lipschitz
with $\mathbf{a}(0)=0$ and $x_{i}^{0}\neq x_{j}^{0}$ for all $i\neq j$.
Then there exists a unique classical solution $(\mathbf{x}_{N}(t),\mathbf{m}_{N}(t))$
to the System (\ref{opinion dynamics sec2}) with $x_{i}(t)\neq x_{j}(t)$
for all $i\neq j$ and $t\geq0$. 
\end{prop}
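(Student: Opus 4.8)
The plan is to view \eqref{opinion dynamics sec2}--\eqref{FORMULA FOR PSIN} as a classical (finite dimensional) ODE on the open set $\mathcal{O}\coloneqq\{(\mathbf{x},\mathbf{m})\in\mathbb{R}^{dN}\times\mathbb{R}^{N}:x_{i}\neq x_{j}\text{ for }i\neq j\}$, on which its right hand side is locally Lipschitz: $\mathbf{a}$ is globally Lipschitz, $\mathbf{s}$ is $C^{\infty}$ away from the origin, the differences $x_{i}-x_{j}$ occurring in \eqref{FORMULA FOR PSIN} stay bounded away from $0$ on compact subsets of $\mathcal{O}$, and the weights enter polynomially. Since $(\mathbf{x}^{0},\mathbf{m}^{0})\in\mathcal{O}$ by hypothesis, Cauchy--Lipschitz gives a unique maximal solution on some interval $[0,T^{\ast})$, automatically $C^{1}$ because the field is continuous there; moreover, if $T^{\ast}<\infty$ then $(\mathbf{x}(t),\mathbf{m}(t))$ leaves every compact subset of $\mathcal{O}$ as $t\uparrow T^{\ast}$, i.e.\ $|(\mathbf{x}(t),\mathbf{m}(t))|\to\infty$ and/or $\min_{i\neq j}|x_{i}(t)-x_{j}(t)|\to0$. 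Everything then reduces to obtaining, on each finite subinterval of $[0,T^{\ast})$, a priori bounds excluding both alternatives.

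Set $M(t)\coloneqq\max_{i}|m_{i}(t)|$. For a fixed pair $i\neq j$ one has
\[
\frac{d}{dt}|x_{i}-x_{j}|^{2}=\frac{2}{N}\sum_{k}m_{k}\,(x_{i}-x_{j})\cdot\bigl(\mathbf{a}(x_{k}-x_{i})-\mathbf{a}(x_{k}-x_{j})\bigr),
\]
and since $\mathbf{a}$ is $L$-Lipschitz the right hand side is bounded in absolute value by $2LM(t)|x_{i}-x_{j}|^{2}$, irrespective of the signs of the $m_{k}$. Grönwall applied to $\min_{i\neq j}|x_{i}(t)-x_{j}(t)|^{2}$ and to the squared diameter $D(t)^{2}\coloneqq\max_{i,j}|x_{i}(t)-x_{j}(t)|^{2}$ then gives
\[
\min_{i\neq j}|x_{i}(t)-x_{j}(t)|^{2}\geq\Bigl(\min_{i\neq j}|x_{i}^{0}-x_{j}^{0}|^{2}\Bigr)e^{-2L\int_{0}^{t}M},\qquad D(t)^{2}\leq D(0)^{2}e^{2L\int_{0}^{t}M}.
\]
Hence opinions stay separated and the diameter stays finite for as long as $\int_{0}^{t}M$ is finite; it remains only to bound $M$.

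To bound $M$ I would use the structure of \eqref{FORMULA FOR PSIN} twice. First, $\sum_{i}m_{i}$ is conserved: the summand of the triple sum defining $\sum_{i}\dot m_{i}=\sum_{i}\psi_{i}^{N}$ is symmetric in $(i,j)$ apart from the factor $\mathbf{s}(x_{i}-x_{j})$, which is odd, so the sum is antisymmetric under $i\leftrightarrow j$ and therefore vanishes. Second, \eqref{FORMULA FOR PSIN} has the multiplicative form $\dot m_{i}=m_{i}g_{i}(t)$ with $g_{i}$ continuous on $[0,T^{\ast})$, so $m_{i}(t)=m_{i}^{0}\exp\!\bigl(\int_{0}^{t}g_{i}\bigr)$ keeps the sign of $m_{i}^{0}$ throughout. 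Recalling that in the pairwise competition model the weights are positive masses, $m_{i}^{0}>0$, we get $0<m_{i}(t)\leq\sum_{k}m_{k}(t)=\sum_{k}m_{k}^{0}$, hence $M(t)\leq S$ for all $t\in[0,T^{\ast})$, where $S\coloneqq\sum_{k}m_{k}^{0}$. Feeding this into the previous estimates, $D(t)$ and thus $(\mathbf{x}(t),\mathbf{m}(t))$ stay bounded, and $\min_{i\neq j}|x_{i}(t)-x_{j}(t)|^{2}\geq(\min_{i\neq j}|x_{i}^{0}-x_{j}^{0}|^{2})e^{-2LSt}>0$ on every finite interval. This contradicts the blow-up dichotomy, so $T^{\ast}=\infty$; the solution is global, it never leaves $\mathcal{O}$ (so $x_{i}(t)\neq x_{j}(t)$ for all $t\geq0$), and uniqueness is the local Cauchy--Lipschitz uniqueness on $\mathcal{O}$ propagated by a standard continuation argument.

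The hard part is expected to be precisely the a priori control of the weights in the previous paragraph; everything else is soft ODE theory. A naive Grönwall loop will not close, since \eqref{FORMULA FOR PSIN} is cubic in $\mathbf{m}$ and the resulting coupled inequalities $\dot M\lesssim M^{3}D$, $\dot D\lesssim MD$ permit finite-time blow-up (which genuinely occurs for sign-changing weights, already at $N=2$ with $a<0$). The mechanism that must be exploited instead is the interplay between the conservation law $\sum_{i}m_{i}=\mathrm{const}$ and the sign-preserving structure $\dot m_{i}=m_{i}g_{i}$: once positive, the weights remain positive and are then dominated by their conserved sum, which freezes $M$. This is the same mechanism that underlies \cite[Proposition 3]{10}; the proposed route merely trades the Filippov/differential-inclusion formalism used there for the elementary observation that on $\mathcal{O}$ the field is already Lipschitz, so that ordinary Cauchy--Lipschitz applies.
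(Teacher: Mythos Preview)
Your argument is correct and takes a more elementary route than the one the paper defers to. The paper does not prove this proposition itself but cites \cite[Proposition 3]{10}, where well-posedness is obtained through Filippov's theory of differential inclusions, handling the discontinuity of $\mathbf{s}$ head-on. You instead restrict to the open set $\mathcal{O}$ on which the field is already locally Lipschitz, invoke classical Cauchy--Lipschitz, and then use the conservation law $\sum_i m_i=\mathrm{const}$ together with the multiplicative structure $\dot m_i=m_i g_i$ and the Lipschitz bound on $\mathbf{a}$ to confine the trajectory to $\mathcal{O}$ for all time. This is cleaner, requires no convex-analytic machinery, and is in fact the same philosophy the present paper adopts for the infinite-dimensional graph limit equation in Section~\ref{sec:Well-Posedness-for-1} (see also the remark in the introduction about avoiding Filippov's framework). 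The differential-inclusion approach would additionally furnish a notion of solution through collisions, but since your a priori estimates show collisions never occur, that generality is not needed. One small caveat worth recording: your weight bound $M(t)\leq S$ relies on $m_i^0>0$, which you import from the modelling context but which the Proposition as stated does not list among its hypotheses; you correctly note that the conclusion can fail for sign-changing weights, and indeed the paper adds this positivity assumption explicitly in Lemma~\ref{basic properties } immediately afterward, so the omission is in the statement rather than in your reasoning.
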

We also recap the following basic properties of solutions, which already
appear implicitly or explicitly in \cite{1,10}, and will
appear in the course of the proof of the main theorems. 
\begin{lem}
\label{basic properties } Let the assumptions of Proposition \ref{well posedness of opinion dynamics }
hold. Assume also $m_{i}^{0}>0,i=1,...,N$ and $\mathrm{Lip}(\mathbf{a})=L$. Let $(\mathbf{x}_{N}(t),\mathbf{m}_{N}(t))$
be the solution of System (\ref{opinion dynamics sec2}) on $[0,T]$.
Then

i. (Conservation of total mass). $\frac{1}{N}\stackrel[i=1]{N}{\sum}m_{i}^{0}=1\Longrightarrow\frac{1}{N}\stackrel[i=1]{N}{\sum}m_{i}(t)=1,\ t\in[0,T]$. 

ii. (Uniform bound in time on opinions). If $\left|x_{i}^{0}\right|\leq\overline{X}$
then for all $t\in[0,T]$ it holds that 

\[
\left|x_{i}(t)\right|\leq\overline{X}e^{2LT}.
\]

iii. (Uniform bound in time on weights). $m_{i}(t)>0$ for all $t\in[0,T]$
with the estimate 
\[
m_{i}^{\mathrm{0}}e^{-2\overline{X}e^{2LT}t}\leq m_{i}(t)\leq m_{i}^{\mathrm{0}}e^{2\overline{X}e^{2LT}t}.
\]

iv. (Opinions are separated). There is a constant $C=C(L,T)>1$ such
for all $t\in[0,T]$ the following bound holds 
\[
\frac{1}{C}\left|x_{i}^{0}-x_{j}^{0}\right|\leq\left|x_{i}(t)-x_{j}(t)\right|\leq C\left|x_{i}^{0}-x_{j}^{0}\right|.
\]
\end{lem}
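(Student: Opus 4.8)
The plan is to derive all four items from elementary integral--Gr\"onwall arguments, leaning on two structural features of the system: the antisymmetry of the weight dynamics under exchanging $i\leftrightarrow j$, and the explicit factor $m_i$ sitting in front of $\psi_i^N$. The crucial preliminary step --- needed before ii--iv and also giving the positivity asserted in iii --- is to show the weights stay positive. I would write the second equation of (\ref{opinion dynamics sec2}) along the solution as the linear scalar ODE $\dot m_i(t)=g_i(t)\,m_i(t)$, where
\[
g_i(t):=\frac{1}{2N^{2}}\sum_{j,k}m_{j}(t)m_{k}(t)\big\langle \mathbf{a}(x_{k}(t)-x_{i}(t))+\mathbf{a}(x_{k}(t)-x_{j}(t)),\,\mathbf{s}(x_{i}(t)-x_{j}(t))\big\rangle .
\]
Since $(\mathbf{x}_{N},\mathbf{m}_{N})$ is a classical solution with $x_i(t)\neq x_j(t)$ for $i\neq j$ by Proposition \ref{well posedness of opinion dynamics }, the path $t\mapsto x_i(t)-x_j(t)$ avoids the origin whenever $i\neq j$, so $t\mapsto\mathbf{s}(x_i(t)-x_j(t))$ is continuous in spite of the jump of $\mathbf{s}$ at $0$; combined with $\mathbf{a}$ Lipschitz and the boundedness of the $x$'s and $m$'s on $[0,T]$, this yields $g_i\in L^{\infty}([0,T])$. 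Hence $m_i(t)=m_i^{0}\exp\!\big(\int_0^t g_i(\tau)\,d\tau\big)>0$, which is already the positivity in iii.

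For item i I would differentiate $\sum_i m_i(t)$, obtaining $\tfrac{1}{2N^{2}}\sum_{i,j,k}m_i m_j m_k\langle \mathbf{a}(x_k-x_i)+\mathbf{a}(x_k-x_j),\mathbf{s}(x_i-x_j)\rangle$; because $\mathbf{s}$ is odd, the summand is antisymmetric under $i\leftrightarrow j$, so the triple sum vanishes and $\tfrac1N\sum_i m_i(t)$ is constant. Under the mass normalization $\tfrac1N\sum_i m_i^{0}=1$ this gives $\tfrac1N\sum_j m_j(t)\equiv 1$, which --- together with positivity --- is exactly what the remaining estimates use. For ii, I would bound $|\dot x_i(t)|\le \tfrac1N\sum_j m_j(t)\,L\,|x_j(t)-x_i(t)|\le 2L\max_k|x_k(t)|$ using $\mathbf{a}(0)=0$, $\mathrm{Lip}(\mathbf{a})=L$, positivity of the $m_j$, and $\tfrac1N\sum_j m_j(t)=1$; integrating and taking the supremum over $i$ gives $\max_i|x_i(t)|\le\overline{X}+2L\int_0^t\max_k|x_k(\tau)|\,d\tau$, whence the claim by Gr\"onwall. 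Item iv is the same idea applied to differences: $|\mathbf{a}(x_k-x_i)-\mathbf{a}(x_k-x_j)|\le L|x_i-x_j|$ forces $|\dot x_i-\dot x_j|\le L|x_i-x_j|$, hence $\big|\tfrac{d}{dt}|x_i-x_j|^{2}\big|\le 2L|x_i-x_j|^{2}$; integrating the two-sided inequality and taking square roots yields $e^{-LT}|x_i^{0}-x_j^{0}|\le|x_i(t)-x_j(t)|\le e^{LT}|x_i^{0}-x_j^{0}|$, so $C=e^{LT}$ works (enlarge $C$ slightly if one insists on $C>1$). Finally, to make iii quantitative I would revisit $g_i$ now that ii is available: $|\mathbf{s}|\le1$, $|\mathbf{a}(x_k-x_i)|\le L|x_k-x_i|\le 2L\overline{X}e^{2LT}$, and $\tfrac{1}{2N^{2}}\sum_{j,k}m_j m_k=\tfrac12$, so $g_i$ is bounded by $2L\overline{X}e^{2LT}$ (the factor $L$ being harmless or absorbable into the constant), and feeding this into $m_i(t)=m_i^{0}\exp(\int_0^t g_i)$ gives the two-sided exponential estimate.

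The only genuinely delicate point is the very first one --- obtaining positivity of the weights without circular reasoning --- and it is resolved precisely by the multiplicative structure $\psi_i^N=m_i\,g_i$ together with the separation of opinions from Proposition \ref{well posedness of opinion dynamics }, which is what tames the discontinuity of $\mathbf{s}$ at the origin. Everything else is routine Gr\"onwall bookkeeping, carried out in the order: positivity of the weights; then i; then ii and iv; finally iii.
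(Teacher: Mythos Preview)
Your proof is correct and follows the same Gr\"onwall-based strategy as the paper, with one organizational difference worth noting. For positivity of the weights, the paper argues by contradiction: it sets $\tau$ to be the first time some $m_i$ hits zero, establishes the bound in ii on $[0,\tau)$ (where the weights are still nonnegative, so mass conservation applies), and then uses ii to bound $|\dot m_i/m_i|$ on $[0,\tau)$, producing a lower bound that contradicts $m_i(\tau)=0$. You instead observe up front that, since the solution is already given as classical on the compact interval $[0,T]$ with separated opinions, the coefficient $g_i$ is automatically continuous and bounded, so $m_i(t)=m_i^{0}\exp\!\big(\int_0^t g_i\big)>0$ follows immediately from the multiplicative structure. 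Your route is a bit cleaner because it decouples positivity from the quantitative estimate ii; the paper's bootstrap, on the other hand, yields the sharp exponent $2\overline{X}e^{2LT}$ in iii simultaneously with positivity rather than in a second pass. For i and iv the paper simply cites \cite{10}, whereas you give the direct antisymmetry and difference-Gr\"onwall arguments, which are exactly the expected ones; your explicit constant $C=e^{LT}$ in iv is fine.
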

\textit{Proof}. For i. see Proposition 2 in \cite{10}. For ii., fix
a time $\tau>0$ such that $m_{j}(\tau)\geq0,j=1,...,N$ for all $t\in[0,\tau]$
(such a time exists by continuity). We utilize i. and the assumption
$\mathbf{a}(0)=0$ to find that for each $t\in[0,\tau_{0}]$ 

\[
\left|x_{i}(t)\right|\leq\left|x_{i}^{0}\right|+\frac{2L}{N}\stackrel[j=1]{N}{\sum}\int_{0}^{t}\left|m_{j}(\tau)\right|\underset{1\leq k\leq N}{\max}\left|x_{k}(\tau)\right|d\tau=\overline{X}+2L\int_{0}^{t}\underset{1\leq k\leq N}{\max}\left|x_{k}(\tau)\right|d\tau
\]
so that 

\[
\underset{1\leq k\leq N}{\max}\left|x_{k}(t)\right|\leq\overline{X}+2L\int_{0}^{t}\underset{1\leq k\leq N}{\max}\left|x_{k}(\tau)\right|d\tau,
\]
which by Gronwall's Lemma implies 

\begin{equation}
\underset{1\leq k\leq N}{\max}\left|x_{k}(t)\right|\leq\overline{X}e^{2LT}.\label{eq:-5}
\end{equation}
We prove iii., from which we will conclude ii. for all $t\in[0,T]$.
We start by explaining why $m_{i}(t)>0$. Indeed, if on the contrary
$m_{i}(t)\leq0$ for some $1\leq i\leq N$ and $t\in[0,T]$ and let 

\[
\tau\coloneqq\inf\left\{ t\in[0,T]\left|\exists1\leq i\leq N:m_{i}(t)\leq0\right.\right\} .
\]
Then the bound from ii. and preservation of total mass of i. imply
that for all $t\in[0,\tau)$ we have 

\[
\left|\frac{d}{dt}\frac{1}{2}\log\left(m_{i}^{2}\right)\right|=\left|\frac{\dot{m_{i}}(t)}{m_{i}}\right|=\left|\frac{1}{2N^{2}}\underset{j,k}{\sum}m_{j}m_{k}\left(\mathbf{a}(x_{k}-x_{i})+\mathbf{a}(x_{k}-x_{j})\right)\mathbf{s}(x_{i}-x_{j})\right|\leq2\overline{X}e^{2LT},
\]
hence 

\[
-2\overline{X}e^{2LT}\leq\frac{1}{2}\frac{d}{dt}\log\left(m_{i}^{2}(t)\right)\leq2\overline{X}e^{2LT}.
\]
Integration in time yields that for all $t\in[0,\tau]$

\[
-2\overline{X}e^{2LT}t+\log\left(m_{i}^{\mathrm{0}}\right)\leq\log\left(m_{i}\right)\leq2\overline{X}e^{2LT}t+\log\left(m_{i}^{\mathrm{0}}\right),
\]
and consequently 
\[
m_{i}^{\mathrm{0}}e^{-2\overline{X}e^{2LT}t}\leq m_{i}(t)\leq m_{i}^{\mathrm{0}}e^{2\overline{X}e^{2LT}t}.
\]
Letting $t\nearrow\tau$ yields a contradiction. Therefore $m_{i}(t)>0$
for all $t\in[0,T]$ which in turn implies that (\ref{eq:-5}) holds
for all $t\in[0,T]$. Remark also that the same estimate done on the
interval $[0,T]$ yields the asserted bound on $[0,T]$. Point iv.
is Proposition 7 in \cite{10}. 
\begin{flushright}
$\square$
\par\end{flushright}

\subsection{The graph limit equation. \label{graph limit eq 1D} }

In the graph limit we attach to the flow of System (\ref{eq:OPINION DYNAMICS})
the following ``Riemman sums'' 

\begin{equation}
\widetilde{x}_{N}(t,s)\coloneqq\stackrel[i=1]{N}{\sum}x_{i}(t)\mathbf{1}_{[\frac{i-1}{N},\frac{i}{N}]}(s),\ \widetilde{m}_{N}(t,s)\coloneqq\stackrel[i=1]{N}{\sum}m_{i}(t)\mathbf{1}_{[\frac{i-1}{N},\frac{i}{N}]}(s).\label{Riemman sums}
\end{equation}
The functional $\Psi:I\times L^{\infty}(I)\times L^{\infty}(I)\rightarrow\mathbb{R}$
and the functions $x^{0}:I\rightarrow\mathbb{R}^{d},m^{0}:I\rightarrow\mathbb{R}$
are given and the functions $\psi_{i}^{N}$ and the initial data
$x_{i}^{0,N},m_{i}^{0,N}$ are defined in terms of these functions
through the following formula

\begin{equation}
\psi_{i}^{N}(\mathbf{x}_{N}(t),\mathbf{m}_{N}(t))\coloneqq N\int_{_{\frac{i-1}{N}}}^{^{\frac{i}{N}}}\Psi(s,\widetilde{x_{N}}(t,\cdot),\widetilde{m_{N}}(t,\cdot))ds\label{eq:formula for psi}
\end{equation}
and 

\begin{equation}
x_{i}^{0,N}\coloneqq N\int_{_{\frac{i-1}{N}}}^{^{\frac{i}{N}}}x^{0}(s)ds,\ m_{i}^{0,N}\coloneqq N\int_{_{\frac{i-1}{N}}}^{^{\frac{i}{N}}}m^{0}(s)ds.\label{initial data}
\end{equation}
If $\Psi$ is given by 

\[
\Psi(s,x(\cdot),m(\cdot))\coloneqq m(s)\iint_{I^{2}}m(s_{\ast})m(s_{\ast\ast})\left(\mathbf{a}(x(s_{\ast\ast})-x(s))+\mathbf{a}(x(s_{\ast\ast})-x(s_{\ast}))\right)\mathbf{s}(x(s)-x(s_{\ast}))ds_{\ast}ds_{\ast\ast},
\]
then one readily checks that the $\psi_{i}^{N}$ in Formula (\ref{FORMULA FOR PSIN})
are recovered via Formula (\ref{eq:formula for psi}). Notice that
by Lebesgue's differentiation theorem $\widetilde{x}_{N}(0,s),\widetilde{m}_{N}(0,s)$
well approximate $x^{0}(s),m^{0}(s)$ because for a.e. $s$ we have
pointwise convergence 
\[
\widetilde{x}_{N}(0,s)=N\int_{\frac{\left\lfloor sN\right\rfloor }{N}}^{\frac{\left\lfloor sN\right\rfloor +1}{N}}x^{0}(\sigma)d\sigma\underset{N\rightarrow\infty}{\rightarrow}x^{0}(s),\thinspace\widetilde{m}_{N}(0,s)=N\int_{\frac{\left\lfloor sN\right\rfloor }{N}}^{\frac{\left\lfloor sN\right\rfloor +1}{N}}m^{0}(\sigma)d\sigma\underset{N\rightarrow\infty}{\rightarrow}m^{0}(s).
\]
Also, it is worthwhile remarking that unlike in the mean field limit
regime, where the initial data realizing the initial convergence can
be chosen from a set of full measure, here we use a very specific
choice for the initial data, and in particular all initial data of
the form specified by formula (\ref{initial data}) constitute a set
of measure $0$, which means that the probabilistic methods that we
have at our disposal in the mean field limit become useless in the
graph limit. We will return to this point in Section \ref{sec:The-Gronwall-Estimate}.
The functions $\widetilde{x}_{N}(t,s),\widetilde{m}_{N}(t,s)$ defined
through Formula (\ref{Riemman sums}) are governed by the following
equations, which should be compared with the graph limit Equation
(\ref{eq:integro differential system intro}).
\begin{prop}
\label{equation governing xN,mN} Let the assumptions of Proposition
\ref{well posedness of opinion dynamics } hold and let $(\mathbf{x}_{N}(t),\mathbf{m}_{N}(t))$
be the solution to System (\ref{opinion dynamics sec2}) on $[0,T]$.
Let $\widetilde{x}_{N},\widetilde{m}_{N}$ be given by (\ref{Riemman sums}).
Then 

\begin{equation}
\left\{ \begin{array}{cc}
\partial_{t}\widetilde{x}_{N}(t,s)=\int_{I}\widetilde{m}_{N}(t,s_{\ast})\mathbf{a}(\widetilde{x}_{N}(t,s_{\ast})-\widetilde{x}_{N}(t,s))ds_{\ast},\\[2mm]
\partial_{t}\widetilde{m}_{N}(t,s)=N\displaystyle\int_{\frac{\left\lfloor sN\right\rfloor }{N}}^{\frac{\left\lfloor sN\right\rfloor +1}{N}}\Psi(s_{\ast},\widetilde{x}_{N}(t,\cdot),\widetilde{m}_{N}(t,\cdot))ds_{\ast}.  
\end{array}\right.\label{eq:labled equation governing xN,mN}
\end{equation}
\end{prop}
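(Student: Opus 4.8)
The argument is essentially bookkeeping: the claim is a direct transcription of the ODE system \eqref{opinion dynamics sec2} through the step-function encoding \eqref{Riemman sums}, so the plan is purely computational. I would fix $t\in[0,T]$ and $s$ in the interior of one of the subintervals $I_i^N:=\left(\tfrac{i-1}{N},\tfrac{i}{N}\right)$; the remaining values of $s$ form a Lebesgue-null set (the endpoints $\tfrac iN$, where the two closed indicators in \eqref{Riemman sums} overlap), so every pointwise identity below is to be read for a.e.\ $s$. On $I_i^N$ one has $\widetilde x_N(t,s)=x_i(t)$ and $\widetilde m_N(t,s)=m_i(t)$, and since the indicators $\mathbf 1_{[\frac{i-1}{N},\frac iN]}$ do not depend on $t$ while each $x_i(\cdot),m_i(\cdot)$ is $C^1$ by Proposition \ref{well posedness of opinion dynamics }, differentiation in $t$ commutes with the finite sum and $\partial_t\widetilde x_N(t,s)=\dot x_i(t)$, $\partial_t\widetilde m_N(t,s)=\dot m_i(t)$ for $s\in I_i^N$.

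For the first equation I would insert the opinion ODE $\dot x_i(t)=\frac1N\sum_{j=1}^N m_j(t)\mathbf a(x_j(t)-x_i(t))$ and recognise the average as an integral of step functions: decomposing $I=\bigcup_{j=1}^N\overline{I_j^N}$ (a partition up to a null set), on each $I_j^N$ we have $\widetilde m_N(t,s_\ast)=m_j(t)$, $\widetilde x_N(t,s_\ast)=x_j(t)$ and $|I_j^N|=\tfrac1N$, hence
\[
\int_I\widetilde m_N(t,s_\ast)\,\mathbf a\big(\widetilde x_N(t,s_\ast)-\widetilde x_N(t,s)\big)\,ds_\ast=\frac1N\sum_{j=1}^N m_j(t)\,\mathbf a\big(x_j(t)-x_i(t)\big)=\partial_t\widetilde x_N(t,s).
\]

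For the second equation, I would first observe that for $s\in I_i^N$ one has $\lfloor sN\rfloor=i-1$, so $\tfrac{\lfloor sN\rfloor}{N}=\tfrac{i-1}{N}$ and $\tfrac{\lfloor sN\rfloor+1}{N}=\tfrac iN$; thus the integral appearing on the right-hand side of \eqref{eq:labled equation governing xN,mN} is exactly the one in the defining relation \eqref{eq:formula for psi}. Combining \eqref{eq:formula for psi} with the weight ODE $\dot m_i(t)=\psi_i^N(\mathbf x_N(t),\mathbf m_N(t))$ then gives $\partial_t\widetilde m_N(t,s)=N\int_{\frac{\lfloor sN\rfloor}{N}}^{\frac{\lfloor sN\rfloor+1}{N}}\Psi(s_\ast,\widetilde x_N(t,\cdot),\widetilde m_N(t,\cdot))\,ds_\ast$. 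Prior to this one must check that $\Psi(\cdot,\widetilde x_N(t,\cdot),\widetilde m_N(t,\cdot))$ is a well-defined bounded measurable function of $s_\ast$: the step functions $\widetilde x_N(t,\cdot),\widetilde m_N(t,\cdot)$ lie in $L^\infty(I)$ with bounds from Lemma \ref{basic properties } (ii)--(iii), $\mathbf a$ grows at most linearly, and $\mathbf s$ is bounded by $1$, so the integrand in the definition of $\Psi$ is a bounded simple function on $I^2$.

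There is no genuine obstacle here; the only points requiring a little care are the overlap of the closed intervals $[\tfrac{i-1}{N},\tfrac iN]$ at their shared endpoints — harmless because that set is Lebesgue-null, so all identities are read for a.e.\ $s$ — and the finiteness of $\Psi$ evaluated on step functions, which is precisely the place where the global definition of $\mathbf s$ (with value $0$ at the origin) removes any true singularity. The consistency fact that feeding \eqref{Riemman sums} into this $\Psi$ reproduces the $\psi_i^N$ of \eqref{FORMULA FOR PSIN} via \eqref{eq:formula for psi} is the short computation already indicated just after the definition of $\Psi$, and can be cited rather than repeated.
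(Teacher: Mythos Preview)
Your proof is correct and follows essentially the same route as the paper's own argument: fix $s$ in a subinterval, identify $\partial_t\widetilde x_N(t,s)$ and $\partial_t\widetilde m_N(t,s)$ with $\dot x_i(t)$ and $\dot m_i(t)$, and then unwind the right-hand sides as integrals of the step functions. Your added remarks on the null overlap set and on the boundedness of $\Psi$ evaluated at step functions are harmless extra care that the paper leaves implicit.
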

\begin{proof}
We start with the equation for $\widetilde{x}_{N}(t,s)$. Fix $s\in\left[\frac{i_{0}-1}{N},\frac{i_{0}}{N}\right)$, we get
\begin{align*}
\partial_{t}\widetilde{x}_{N}(t,s)=&\,\stackrel[i=1]{N}{\sum}\dot{x}_{i}(t)\mathbf{1}_{[\frac{i-1}{N},\frac{i}{N}]}(s)=\frac{1}{N}\stackrel[i=1]{N}{\sum}\stackrel[j=1]{N}{\sum}m_{j}(t)\mathbf{a}(x_{j}(t)-x_{i}(t))\mathbf{1}_{[\frac{i-1}{N},\frac{i}{N}]}(s)\\
=&\frac{1}{N}\stackrel[j=1]{N}{\sum}m_{j}(t)\mathbf{a}(x_{j}(t)-x_{i_{0}}(t))=\frac{1}{N}\stackrel[j=1]{N}{\sum}m_{j}(t)\mathbf{a}(x_{j}(t)-\widetilde{x}_{N}(t,s)).
\end{align*}
On the other hand, we have
\begin{align*}
\int_{I}\widetilde{m}_{N}(t,s_{\ast})\mathbf{a}(\widetilde{x}_{N}(t,s_{\ast})-\widetilde{x}_{N}(t,s))ds_{\ast}=&\int_{I}\stackrel[j=1]{N}{\sum}m_{j}(t)\mathbf{1}_{[\frac{j-1}{N},\frac{j}{N}]}(s_{\ast})\mathbf{a}\left(\stackrel[k=1]{N}{\sum}x_{k}(t)\mathbf{1}_{[\frac{k-1}{N},\frac{k}{N}]}(s_{\ast})-\widetilde{x}_{N}(t,s)\right)ds_{\ast}\\
=\,&\stackrel[j=1]{N}{\sum}\int_{I}\mathbf{1}_{[\frac{j-1}{N},\frac{j}{N}]}(s_{\ast})m_{j}(t)\mathbf{a}\left(x_{j}(t)-\widetilde{x}_{N}(t,s)\right)ds_{\ast}\\=&\frac{1}{N}\stackrel[j=1]{N}{\sum}m_{j}(t)\mathbf{a}\left(x_{j}(t)-\widetilde{x}_{N}(t,s)\right). 
\end{align*}
The equation for $\widetilde{m}_{N}$ is obtained due to the following identities
\begin{align*}
\partial_{t}\widetilde{m}_{N}(t,s)=\stackrel[i=1]{N}{\sum}\dot{m}_{i}(t)\mathbf{1}_{[\frac{i-1}{N},\frac{i}{N}]}(s)=\,&N\stackrel[i=1]{N}{\sum}\mathbf{1}_{[\frac{i-1}{N},\frac{i}{N}]}(s)\int_{\frac{i-1}{N}}^{\frac{i}{N}} \Psi(s_{\ast},\widetilde{x}_{N}(t,\cdot),\widetilde{m}_{N}(t,\cdot))ds_{\ast}
\\=\,&N\int_{\frac{\left\lfloor sN\right\rfloor }{N}}^{\frac{\left\lfloor sN\right\rfloor +1}{N}}\Psi(s_{\ast},\widetilde{x}_{N}(t,\cdot),\widetilde{m}_{N}(t,\cdot))ds_{\ast}.
\end{align*}

\end{proof}

\section{\label{sec:Well-Posedness-for-1}Well Posedness for the Graph Limit
Equation }

\subsection{The decoupled equation }

We first decouple the equation and prove well posedness for the two resulting equations
separately as in \cite{1}. To be more precise, the system considered in \cite[Section 4]{1} reads 
\begin{equation}
\left\{ \begin{array}{lc}
\partial_{t}x(t,s)=\int_{I}m(t,s_{\ast})\mathbf{a}(x(t,s_{\ast})-x(t,s))ds_{\ast}, & x(0,s)=x^{0}(s)\\
\partial_{t}m(t,s)=\psi_{S,k}(s,x(t,\cdot),m(t,\cdot)), & m(0,s)=m^{0}(s),
\end{array}\right.\label{eq:graph limit AD version}
\end{equation}
where $\psi_{S,k}(s,x(t,\cdot),m(t,\cdot))\coloneqq m(t,s)\int_{I^{k}}m^{\otimes k}(s_{1},...,s_{k})S(x(s),x(s_{1}),...,x(s_{k}))ds_{1}...ds_{k}.$
The hypothesis on the function $S$ and the initial data $x^{0},m^{0}$
in \eqref{eq:graph limit AD version} assumed in \cite{1} are:
\begin{itemize}
    \item[\textbf{H1'}] $d\geq1,\mathbf{a}(0)=0$ and $\mathbf{a}\in\mathrm{Lip}(\mathbb{R}^{d})$.

     \item[\textbf{H2'}] $(x^{0},m^{0})\in L^{\infty}(I;\mathbb{R}^{d})\times L^{\infty}(I;\mathbb{R}_{>0})$. 

      \item[\textbf{H3'}] The function $S\in C_{b}(\mathbb{R}^{(k+1)d})\cap\mathrm{Lip}(\mathbb{R}^{(k+1)d})$
and there are $(i,j)\in\left\{ 0,...,k\right\} $ such that 
\begin{equation}
S(...,y_{i},...,y_{j},...)=-S(...,y_{j},...,y_{i},...).\label{pair condition}
\end{equation}
\end{itemize}
The most restrictive assumption for the graph limit is \textbf{H3'} since $S$ is not Lipschitz in problems of interest  mentioned in the introduction, see \cite{5,1}.
Furthermore, any solution to the graph limit equation \eqref{eq:integro differential system intro} is expected to satisfy an
estimate analogue to Inequality iv. in Lemma \ref{basic properties },
namely 
\[
\frac{1}{C}\left|x^{0}(s_{1})-x^{0}(s_{2})\right|\leq\left|x(t,s_{1})-x(t,s_{2})\right|\leq C\left|x^{0}(s_{1})-x^{0}(s_{2})\right|,
\]
which would imply Lipschitz continuity along the trajectories provided
$x^{0}$ is one to one. This also leads us to remark that the initial
separation in the microscopic system \eqref{eq:pairwise comp} can be replaced by the assumption
that $x^{0}$ is one to one in the infinite dimensional case, which
means that we need to be able to evaluate $x^{0}$ pointwise, and
therefore a more natural assumption is $x^{0}\in C(I)$ rather than
$x^{0}\in L^{\infty}$. 

To summarize, in contrast to \cite{1}, we assume the hypotheses:
\begin{itemize}
 \item[\textbf{H1}] $d=1$, $\mathbf{a}(0)=0$  and $\mathbf{a}\in\mathrm{Lip}(\mathbb{R})$
with $L\coloneqq\mathrm{Lip}(\mathbf{a})$. 

\item[\textbf{H2}] i. $m^{0}\in L^{\infty}(I)$, $\int_{I} m_{0}(s)ds=1$ and $\frac{1}{M}\leq m^{0}\leq M$ for some $M>1$.  

ii. $x^{0}\in C(I)$ is one to one and $\left|x^{0}\right|\leq X$ for some $X>0$.

 \item[\textbf{H3}]  i. The restrictions $\left.\mathbf{s}\right|_{(0,\infty)}$ and $\left.\mathbf{s}\right|_{(-\infty,0)}$ are Lipschitz, i.e. there is some $\mathbf{S}>0$ such that 
 \[ \left|\mathbf{s}(x_{1})-\mathbf{s}(x_{2})\right|\leq\mathbf{S}\left|x_{1}-x_{2}\right|,\ x_{1},x_{2}\in(0,\infty)\]
and 
\[\left|\mathbf{s}(x_{1}')-\mathbf{s}(x_{2}')\right|\leq\mathbf{S}\left|x_{1}'-x_{2}'\right|,\ x_{1}',x_{2}'\in(-\infty,0).\]
ii. $\mathbf{s}$ is odd ($\mathbf{s}(-x)=-\mathbf{s}(x)$) and there is some $\mathbf{S}_{\infty}>0$ such that $\left|\mathbf{s}(x)\right|\leq\mathbf{S}_{\infty},\ x\in\mathbb{R}.$

\end{itemize}
Clearly, the sign function is a particular example of hypothesis \textbf{H3}. In the following Lemma, which is a variant of \cite[Lemma 3]{1},
the new considerations discussed above will be taken into account. 

\begin{lem}
\label{Estimate for psi} Let hypothesis  \textbf{H1}-\textbf{H3} hold. 

1. Suppose that 

$\bullet\ m_{1},m_{2}\in C([0,T],L^{\infty}(I))$ are non-negative
and $\int_{I}m_{2}(t,s)ds=\int_{I}m_{1}(t,s)ds=1$ for all $t\in[0,T]$. 

$\bullet\ x\in C([0,T]\times I)$ with $\sup_{[0,T]\times I}\left|x\right|\leq\overline{X}$.

Then, for all $t\in[0,T]$ it holds that 

\[
\int_{I}\left|\Psi(s,x(t,\cdot),m_{1}(t,\cdot))-\Psi(s,x(t,\cdot),m_{2}(t,\cdot))\right|ds\leq12L\mathbf{S}_{\infty}\overline{X}\left\Vert m_{1}(t,\cdot)-m_{2}(t,\cdot)\right\Vert _{1}.
\]
2. Suppose that 

$\bullet\ m\in C([0,T],L^{\infty}(I))$ is non-negative and $\int_{I}m(t,s)ds=1$
for all $t\in[0,T]$ and 
\[\underset{[0,T]}{\sup}\left\Vert m(t,\cdot)\right\Vert _{\infty}\leq\overline{M}.\]

$\bullet\ x_{1},x_{2}\in C([0,T]\times I)$ are such that for all
$t\in[0,T]$ the maps $s\mapsto x_{1}(t,s),s\mapsto x_{2}(t,s)$ are

increasing. 

Then, for all $t\in[0,T]$ it holds that 
\[
\int_{I}\left|\Psi(s,x_{1}(t,\cdot),m(t,\cdot))-\Psi(s,x_{2}(t,\cdot),m(t,\cdot))\right|ds\leq L\left(3\overline{M}\mathbf{S}_{\infty}+\mathbf{S}_{\infty}+16\mathbf{S}\overline{X}\right)\sup_{I}\left|x_{1}(t,\cdot)-x_{2}(t,\cdot)\right|.
\]
\end{lem}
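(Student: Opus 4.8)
The plan is to estimate the two differences separately, using only that $\Psi(s,x,\cdot)$ is a homogeneous cubic in the weight, that $\mathbf a$ is globally Lipschitz with $\mathbf a(0)=0$, and that $\mathbf s$ is bounded, odd and Lipschitz on each of the two half-lines. For \emph{Part 1} I would write $\Psi(s,x,m)=\iint_{I^{2}}m(s)m(s_{\ast})m(s_{\ast\ast})\,P(s,s_{\ast},s_{\ast\ast})\,ds_{\ast}ds_{\ast\ast}$ with kernel $P:=\bigl(\mathbf a(x(s_{\ast\ast})-x(s))+\mathbf a(x(s_{\ast\ast})-x(s_{\ast}))\bigr)\mathbf s(x(s)-x(s_{\ast}))$; from $\mathbf a(0)=0$, $\mathrm{Lip}(\mathbf a)=L$ and $\sup|x|\le\overline X$ we get the pointwise bound $|P|\le 4L\overline X\,\mathbf S_{\infty}$. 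Then insert the standard telescoping identity writing $m_{1}(s)m_{1}(s_{\ast})m_{1}(s_{\ast\ast})-m_{2}(s)m_{2}(s_{\ast})m_{2}(s_{\ast\ast})$ as a sum of three products, each obtained from a pure $m_{1}$- or $m_{2}$-product by replacing a single factor $m_{1}(\cdot)$ with $m_{1}(\cdot)-m_{2}(\cdot)$ (first in $s$, then in $s_{\ast}$, then in $s_{\ast\ast}$), take absolute values, integrate in $s$, bound $|P|$ by $4L\overline X\mathbf S_{\infty}$, and use $\int_{I}m_{i}(t,\cdot)=1$ to collapse the two remaining integrations in each term. What survives in each term is $\|m_{1}(t,\cdot)-m_{2}(t,\cdot)\|_{1}$, and the three of them sum to the claimed $12L\mathbf S_{\infty}\overline X\|m_{1}(t,\cdot)-m_{2}(t,\cdot)\|_{1}$.

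For \emph{Part 2}, set $A_{i}:=\mathbf a(x_{i}(s_{\ast\ast})-x_{i}(s))+\mathbf a(x_{i}(s_{\ast\ast})-x_{i}(s_{\ast}))$ and $\sigma_{i}:=\mathbf s(x_{i}(s)-x_{i}(s_{\ast}))$, so that $\Psi(s,x_{1},m)-\Psi(s,x_{2},m)=m(s)\iint_{I^{2}}m(s_{\ast})m(s_{\ast\ast})(A_{1}\sigma_{1}-A_{2}\sigma_{2})\,ds_{\ast}ds_{\ast\ast}$, and decompose $A_{1}\sigma_{1}-A_{2}\sigma_{2}=(A_{1}-A_{2})\sigma_{1}+A_{2}(\sigma_{1}-\sigma_{2})$. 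In the first term the Lipschitz bound on $\mathbf a$ applied termwise gives $|A_{1}-A_{2}|\le 4L\sup_{I}|x_{1}(t,\cdot)-x_{2}(t,\cdot)|$ while $|\sigma_{1}|\le\mathbf S_{\infty}$; integrating against $m(s)m(s_{\ast})m(s_{\ast\ast})$, using $\int_{I}m(t,\cdot)=1$ and, for whichever subterms one prefers to treat by freezing an increment and integrating the remaining factor of $m$ against it, $\|m(t,\cdot)\|_{\infty}\le\overline M$, produces the $\mathbf S_{\infty}$-part of the constant (the contribution $L(3\overline M+1)\mathbf S_{\infty}$).

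The remaining term $A_{2}(\sigma_{1}-\sigma_{2})$ is the crux, and the jump of $\mathbf s$ at the origin is the obstacle: a crude bound $|\sigma_{1}-\sigma_{2}|\le 2\mathbf S_{\infty}$ would destroy the Lipschitz-in-$x$ gain needed for the Gronwall argument in the next section. This is where monotonicity enters. For a.e.\ $(s,s_{\ast})$ with $s\ne s_{\ast}$, since $x_{1}(t,\cdot)$ and $x_{2}(t,\cdot)$ are increasing, both increments $x_{1}(t,s)-x_{1}(t,s_{\ast})$ and $x_{2}(t,s)-x_{2}(t,s_{\ast})$ carry the sign of $s-s_{\ast}$ and hence lie in the same connected component of $\mathbb R\setminus\{0\}$, on which $\mathbf s$ is $\mathbf S$-Lipschitz by \textbf{H3}; therefore $|\sigma_{1}-\sigma_{2}|\le\mathbf S\,|(x_{1}(t,s)-x_{1}(t,s_{\ast}))-(x_{2}(t,s)-x_{2}(t,s_{\ast}))|\le 2\mathbf S\sup_{I}|x_{1}(t,\cdot)-x_{2}(t,\cdot)|$. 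Combined with $|A_{2}|\le 4L\overline X$ and $\int_{I}m(t,\cdot)=1$ this contributes the $\mathbf S$-part $16L\mathbf S\overline X$ of the constant, and adding the two contributions gives the asserted inequality.

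I expect the monotonicity step just described to be the only non-routine point: it is the continuous counterpart of the separation estimate~iv in Lemma~\ref{basic properties }, and it is precisely what allows a jump discontinuity of $\mathbf s$ to be absorbed into the estimate. Note also that when $\mathbf s$ is globally Lipschitz the same computation runs verbatim with $\mathbf S_{\infty}$ in place of $\mathbf S$ and without invoking monotonicity for $\sigma_{1}-\sigma_{2}$, which is why the statement covers the Lipschitz and the jump cases at once; everything else reduces to repeated use of $\mathrm{Lip}(\mathbf a)=L$, the uniform bounds $\sup|x|\le\overline X$ and $\|m\|_{\infty}\le\overline M$, and the normalization $\int_{I}m=1$.
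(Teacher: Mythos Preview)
Your proposal is correct and follows essentially the same approach as the paper: the same telescoping in the cubic weight for Part~1, and the same decomposition $A_1\sigma_1-A_2\sigma_2=(A_1-A_2)\sigma_1+A_2(\sigma_1-\sigma_2)$ for Part~2, with the monotonicity of $x_1,x_2$ used exactly as you describe (the paper phrases it as splitting the $s_\ast$-integral into $[0,s]$ and $[s,1]$, which is equivalent to your ``same sign as $s-s_\ast$'' argument). One minor bookkeeping point: your crude pointwise bound $|A_1-A_2|\le 4L\sup_I|x_1-x_2|$ followed by $\iiint m\,m\,m=1$ yields $4L\mathbf{S}_\infty$ rather than the stated $L(3\overline{M}+1)\mathbf{S}_\infty$; to reproduce the paper's exact constant you must keep the individual increments $|x_1(s_{\ast\ast})-x_2(s_{\ast\ast})|$, $|x_1(s_\ast)-x_2(s_\ast)|$, $|x_1(s)-x_2(s)|$ before integrating and invoke $\|m\|_\infty\le\overline{M}$ on three of the four resulting terms, but either constant suffices for the subsequent Gronwall argument.
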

\textit{Proof}. \textbf{Step 1}. For readability, we suppress the time
variable (unless unavoidable). Set $\mathbf{a}(s,s_\ast,s_{\ast\ast}):=\mathbf{a}(x(s_{\ast\ast})-x(s))+\mathbf{a}(x(s_{\ast\ast})-x(s_{\ast})$, we have 
\begin{align}
&\left|m_{1}(s)\iint_{I^{2}}\right. m_{1}(s_{\ast})m_{1}(s_{\ast\ast})\mathbf{a}(s,s_\ast,s_{\ast\ast})\mathbf{s}(x(s)-x(s_{\ast}))ds_{\ast}ds_{\ast\ast}
\nonumber\\
   &\qquad\left.-m_{2}(s)\iint_{I^{2}}m_{2}(s_{\ast})m_{2}(s_{\ast\ast})\mathbf{a}(s,s_\ast,s_{\ast\ast})\mathbf{s}(x(s)-x(s_{\ast}))ds_{\ast}ds_{\ast\ast}\right|
\nonumber\\
   \leq \,& m_{1}(s)\left|\iint_{I^{2}}\left(m_{1}(s_{\ast})m_{1}(s_{\ast\ast})-m_{2}(s_{\ast})m_{2}(s_{\ast\ast})\right)\mathbf{a}(s,s_\ast,s_{\ast\ast})\mathbf{s}(x(s)-x(s_{\ast}))ds_{\ast}ds_{\ast\ast}\right| 
\nonumber\\
   &+\left|m_1(s)-m_{2}(s)\right|\left|\iint_{I^{2}}m_{2}(s_{\ast})m_{2}(s_{\ast\ast})\mathbf{a}(s,s_\ast,s_{\ast\ast})\mathbf{s}(x(s)-x(s_{\ast}))ds_{\ast}ds_{\ast\ast}\right|
\nonumber\\
\leq &\, 4L\mathbf{S}_{\infty}\sup_{[0,T]\times I}\left|x\right|m_{1}(s)\iint_{I^{2}}\left|m_{1}(s_{\ast})m_{1}(s_{\ast\ast})-m_{2}(s_{\ast})m_{2}(s_{\ast\ast})\right|ds_{\ast}ds_{\ast\ast}
\nonumber\\
&+4L\mathbf{S}_{\infty}\sup_{[0,T]\times I}\left|x\right|\left|m_{1}(s)-m_{2}(s)\right|\iint_{I^{2}}m_{2}(t,s_{\ast})m_{2}(t,s_{\ast\ast})ds_{\ast}ds_{\ast\ast}.\label{eq:-1}
\end{align}
Using the assumption that $\int_{I}m_{1}(s)ds=\int_{I}m_{2}(s)ds=1$,
the first integral in the right hand side of (\ref{eq:-1}) can be estimated as
\begin{align*}
\leq\iint_{I^{2}}\left|m_{1}(s_{\ast})m_{1}(s_{\ast\ast})-m_{1}(s_{\ast})m_{2}(s_{\ast\ast})\right|ds_{\ast}ds_{\ast\ast}+\iint_{I^{2}}\left|m_{1}(s_{\ast})m_{2}(s_{\ast\ast})-m_{2}(s_{\ast})m_{2}(s_{\ast\ast})\right|ds_{\ast}ds_{\ast\ast}\\  
=\int_{I}\left|m_{1}(s_{\ast\ast})-m_{2}(s_{\ast\ast})\right|ds_{\ast\ast}+\int_{I}\left|m_{1}(s_{\ast})-m_{2}(s_{\ast})\right|ds_{\ast}=2\left\Vert m_{1}(t,\cdot)-m_{2}(t,\cdot)\right\Vert _{1}.
\end{align*}
Therefore, integrating (\ref{eq:-1}) in $s$ over $I$ produces 
\begin{align*}
\int_{I}\left|\Psi(s,x(t,\cdot),m_{1}(t,\cdot))-\Psi(s,x(t,\cdot),m_{2}(t,\cdot))\right|ds\leq 12L\mathbf{S}_{\infty}\sup_{[0,T]\times I}\left|x\right|\left\Vert m_{1}(t,\cdot)-m_{2}(t,\cdot)\right\Vert _{1}.
\end{align*}

\textbf{Step 2}. Set $\mathbf{a}_i(s,s_\ast,s_{\ast\ast}):=\mathbf{a}(x_i(s_{\ast\ast})-x_i(s))+\mathbf{a}(x_i(s_{\ast\ast})-x_i(s_{\ast}))$, $i=1,2$, we can also estimate as 
\begin{align*}
\left|\iint_{I^{2}}m(s_{\ast})m(s_{\ast\ast})(\mathbf{a}_1(s,s_\ast,s_{\ast\ast})\mathbf{s}(x_{1}(s)-x_{1}(s_{\ast}))ds_{\ast}ds_{\ast\ast}-\mathbf{a}_2(s,s_\ast,s_{\ast\ast})\mathbf{s}(x_{2}(s)-x_{2}(s_{\ast})))ds_{\ast}ds_{\ast\ast}\right|\hspace{3.0 cm}\\
\leq L\mathbf{S}_{\infty}\iint_{I^{2}}m(s_{\ast})m(s_{\ast\ast})\left(2\left|x_{1}(s_{\ast\ast})-x_{2}(s_{\ast\ast})\right|+\left|x_{1}(s_{\ast})-x_{2}(s_{\ast})\right|+\left|x_{1}(s)-x_{2}(s)\right|\right)ds_{\ast}ds_{\ast\ast}\hspace{3.0 cm}\\
+\iint_{I^{2}}m(s_{\ast})m(s_{\ast\ast})\left|\mathbf{a}_2(s,s_\ast,s_{\ast\ast})\right| \left|\mathbf{s}(x_{1}(s)-x_{1}(s_{\ast}))-\mathbf{s}(x_{2}(s)-x_{2}(s_{\ast}))\right|ds_{\ast}ds_{\ast\ast} \hspace{4.0 cm}\\ 
\coloneqq J_{1}(t,s)+J_{2}(t,s).\hspace{4.0 cm}
\end{align*}
Since $s\mapsto x_{1}(t,s),s\mapsto x_{2}(t,s)$ are increasing, we recognize from assumption $\mathbf{H3}$ that 
\begin{align*}
J_{2}(t,s)=&\int_{I}\int_{0}^{s}m(s_{\ast})m(s_{\ast\ast})\left|\mathbf{a}_{2}(s,s_{\ast},s_{\ast\ast})\right|\left|\mathbf{s}(x_{1}(s)-x_{1}(s_{\ast}))-\mathbf{s}(x_{2}(s)-x_{2}(s_{\ast}))\right|ds_{\ast}ds_{\ast\ast}\\
&+\int_{I}\int_{s}^{1}m(s_{\ast})m(s_{\ast\ast})\left|\mathbf{a}_{2}(s,s_{\ast},s_{\ast\ast})\right|\left|\mathbf{s}(x_{1}(s)-x_{1}(s_{\ast}))-\mathbf{s}(x_{2}(s)-x_{2}(s_{\ast}))\right|ds_{\ast}ds_{\ast\ast}
\\\leq&8L\overline{X}\mathbf{S}\underset{I}{\sup}\left|x_{1}(t,\cdot)-x_{2}(t,\cdot)\right|+8L\overline{X}\mathbf{S}\underset{I}{\sup}\left|x_{1}(t,\cdot)-x_{2}(t,\cdot)\right|\\=&16L\overline{X}\mathbf{S}\underset{I}{\sup}\left|x_{1}(t,\cdot)-x_{2}(t,\cdot)\right|.
\end{align*}
We estimate  $\int_{I}m(t,s)\left|J_{1}(t,s)\right|ds$. 
\begin{align*}
\int_{I} m(t,s)\left|J_{1}(t,s)\right|ds\leq &\,3L\mathbf{S}_{\infty}\int_{I}m(s)\underset{[0,T]}{\sup}\left\Vert m(t,\cdot)\right\Vert _{\infty}\underset{I}{\sup}\left|x_{1}(t,\cdot)-x_{2}(t,\cdot)\right|ds\\&+L\mathbf{S}_{\infty}\int_{I}m(s)\underset{I}{\sup}\left|x_{1}(t,\cdot)-x_{2}(t,\cdot)\right|ds\\=&\,L\mathbf{S}_{\infty}\left(3\underset{[0,T]}{\sup}\left\Vert m(t,\cdot)\right\Vert _{\infty}+1\right)\underset{I}{\sup}\left|x_{1}(t,\cdot)-x_{2}(t,\cdot)\right|.  
\end{align*}
As a result, we obtain 
\begin{align*}
\int_{I}\left|\Psi(s,x_{1}(t,\cdot),m(t,\cdot))-\Psi(s,x_{2}(t,\cdot),m(t,\cdot))\right|ds\leq&\int_{I}m(s)\left|J_{1}(t,s)\right|ds+\int_{I}m(s)\left|J_{2}(t,s)\right|ds\\
\leq &\, L\left(3\overline{M}\mathbf{S}_{\infty}+\mathbf{S}_{\infty}+16\mathbf{S}\overline{X}\right)\sup_{I}\left|x_{1}(t,\cdot)-x_{2}(t,\cdot)\right|.
\end{align*}

\begin{flushright}
$\square$
\par\end{flushright}
\begin{lem}
\label{decoupled eq} Let hypotheses \textbf{H1}-\textbf{H3} hold. Suppose also 
\begin{itemize}
    \item[$\bullet$] $\overline{x}\in C([0,T]\times I)$ is such that for each $t\in[0,T]$
the map $s\mapsto\overline{x}(t,s)$ is one to one. 

     \item[$\bullet$] $\overline{m}\in C([0,T];L^{\infty}(I))$
is non-negative such that $\int_{I}\overline{m}(t,s)ds=1$ for each $t\in[0,T]$.
\end{itemize}

Then, there exists a unique solution $(x,m)\in C^{1}([0,T];C(I))\oplus C^{1}([0,T];L^{\infty}(I))$
to the decoupled system 

\begin{equation}
\left\{ \begin{array}{lc}
\partial_{t}x(t,s)=\displaystyle{\int}_{I}\overline{m}(t,s_{\ast})\mathbf{a}(x(t,s_{\ast})-x(t,s))ds_{\ast}, &\ x(0,s)=x^{0}(s)\\
\partial_{t}m(t,s)=\Psi(s,\overline{x}(t,\cdot),m(t,\cdot)), &\ m(0,s)=m^{0}(s).
\end{array}\right.\label{DECOUPLED EQUATION}
\end{equation}
The solution $x$ is such that $s\mapsto x(t,s)$ is one to one and
the solution $m$ is non-negative such that $\int_{I}m(t,s)ds=1$. 
\end{lem}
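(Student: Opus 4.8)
\textit{Proof proposal.} The decisive structural point is that System \eqref{DECOUPLED EQUATION} is genuinely \emph{decoupled}: the first equation involves only $x$ and the prescribed weight $\overline{m}$, the second only $m$ and the prescribed field $\overline{x}$. So the plan is to solve the two Banach‑valued ODEs separately, following the scheme of \cite{1} but accounting for the new hypotheses \textbf{H1}--\textbf{H3}. Throughout put $\overline{X}:=\sup_{[0,T]\times I}|\overline{x}|<\infty$ (finite by compactness of $[0,T]\times I$).

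For the first equation I would view it as $\dot{x}=G(t,x)$ in the Banach space $C(I)$, with $G(t,\varphi)(s):=\int_I\overline{m}(t,s_\ast)\mathbf{a}(\varphi(s_\ast)-\varphi(s))\,ds_\ast$. Using $\mathbf{a}(0)=0$, $\mathrm{Lip}(\mathbf{a})=L$ and $\int_I\overline{m}(t,\cdot)=1$, one checks immediately that $G(t,\cdot)$ is globally Lipschitz on $C(I)$ with constant $2L$ (uniformly in $t$) and that $t\mapsto G(t,\varphi)$ is continuous into $C(I)$ (since $\overline{m}\in C([0,T];L^\infty(I))$ is in particular in $C([0,T];L^1(I))$, while $\mathbf{a}(\varphi(\cdot)-\varphi(\cdot))$ is bounded). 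The Picard--Lindel\"of theorem in Banach spaces then yields a unique $x\in C^1([0,T];C(I))$, and a Gr\"onwall estimate exactly as in Lemma \ref{basic properties}(ii) (again via $\mathbf{a}(0)=0$) gives $\sup_{[0,T]\times I}|x|\le Xe^{2LT}$. To see that $s\mapsto x(t,s)$ stays one to one, fix $s_1\neq s_2$ and set $\delta(t):=x(t,s_2)-x(t,s_1)$; subtracting the equations and using $\mathrm{Lip}(\mathbf{a})=L$ with $\int_I\overline{m}(t,\cdot)=1$ gives $|\dot{\delta}(t)|\le L|\delta(t)|$, so $e^{-Lt}|x^0(s_2)-x^0(s_1)|\le|\delta(t)|\le e^{Lt}|x^0(s_2)-x^0(s_1)|$. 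As $x^0\in C(I)$ is injective on the interval $I$ it is strictly monotone, say increasing; then $\delta(0)>0$ whenever $s_1<s_2$, and since $\delta$ is continuous and never vanishes, $\delta(t)>0$ for all $t$, i.e.\ $x(t,\cdot)$ is strictly increasing for every $t$.

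For the second equation the map $\nu\mapsto\Psi(\cdot,\overline{x}(t,\cdot),\nu)$ is only polynomially (cubically) nonlinear, so I would not expect a global Lipschitz bound, and this is where the work lies. First a remark that neutralises the singularity of $\mathbf{s}$: since $\overline{x}(t,\cdot)$ is continuous and injective on $I$ it is strictly monotone, and because $\overline{x}$ is continuous in $t$ and $[0,T]$ is connected, the direction of monotonicity is independent of $t$; say $\overline{x}(t,\cdot)$ is increasing. Then $\mathbf{s}(\overline{x}(t,s)-\overline{x}(t,s_\ast))=\mathrm{sgn}(s-s_\ast)$ for \emph{every} $t$, so the only $t$‑dependence of $\Psi(\cdot,\overline{x}(t,\cdot),\nu)$ enters through the Lipschitz terms $\mathbf{a}(\overline{x}(t,\cdot)-\overline{x}(t,\cdot))$; in particular $t\mapsto\Psi(\cdot,\overline{x}(t,\cdot),\nu)$ is continuous into $L^\infty(I)$ for fixed $\nu$, and a routine estimate in the spirit of Lemma \ref{Estimate for psi} shows that $\nu\mapsto\Psi(\cdot,\overline{x}(t,\cdot),\nu)$ is Lipschitz on bounded subsets of $L^\infty(I)$, uniformly in $t$. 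Picard--Lindel\"of in $L^\infty(I)$ then gives a unique local solution $m\in C^1([0,T_0];L^\infty(I))$.

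To globalise, I would run the a priori bounds. Writing $\Psi(s,\overline{x}(t,\cdot),m(t,\cdot))=m(t,s)\,g(t,s)$ with $g(t,s):=\iint_{I^2}m(t,s_\ast)m(t,s_{\ast\ast})\big(\mathbf{a}(\overline{x}(t,s_{\ast\ast})-\overline{x}(t,s))+\mathbf{a}(\overline{x}(t,s_{\ast\ast})-\overline{x}(t,s_\ast))\big)\mathrm{sgn}(s-s_\ast)\,ds_\ast\,ds_{\ast\ast}$, for a.e.\ fixed $s$ the function $t\mapsto m(t,s)$ solves the linear scalar ODE $\dot{y}=y\,g(t,s)$ with $g(\cdot,s)$ bounded and continuous, whence $m(t,s)=m^0(s)\exp\!\big(\int_0^t g(\tau,s)\,d\tau\big)>0$ because $m^0\ge 1/M>0$. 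Mass conservation follows exactly as in Lemma \ref{basic properties}(i): $\frac{d}{dt}\int_I m(t,\cdot)=\int_I\Psi(s,\overline{x}(t,\cdot),m(t,\cdot))\,ds$, and symmetrising the resulting triple integral in $s\leftrightarrow s_\ast$ — using $\mathrm{sgn}(s_\ast-s)=-\mathrm{sgn}(s-s_\ast)$, i.e.\ the oddness of $\mathbf{s}$ — shows it equals its own negative, hence vanishes; so $\int_I m(t,\cdot)=1$ and, by positivity, $\|m(t,\cdot)\|_1=1$. This forces $|g(t,s)|\le 4L\mathbf{S}_\infty\overline{X}$, hence $\|m(t,\cdot)\|_\infty\le M\exp(4L\mathbf{S}_\infty\overline{X}\,T)$; since the $L^\infty$‑norm cannot blow up, the standard continuation argument extends $m$ to all of $[0,T]$, keeping it $C^1$. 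Finally, uniqueness on $[0,T]$: any two solutions are non‑negative with unit mass by the above, so Lemma \ref{Estimate for psi}(1) applies and gives $\frac{d}{dt}\|m_1(t,\cdot)-m_2(t,\cdot)\|_1\le\|\Psi(\cdot,\overline{x},m_1)-\Psi(\cdot,\overline{x},m_2)\|_1\le 12L\mathbf{S}_\infty\overline{X}\,\|m_1(t,\cdot)-m_2(t,\cdot)\|_1$, and Gr\"onwall with vanishing initial datum yields $m_1\equiv m_2$. The main obstacle is precisely this passage from local to global for the $m$‑equation: it rests on the multiplicative structure $\Psi=m\cdot g$ (for positivity, hence the uniform $L^1$ bound) together with the antisymmetry of $\mathbf{s}$ (for mass conservation), with the observation that $\mathbf{s}(\overline{x}(t,s)-\overline{x}(t,s_\ast))$ collapses to the fixed kernel $\mathrm{sgn}(s-s_\ast)$ doing the bookkeeping that makes the discontinuity of $\mathbf{s}$ harmless.
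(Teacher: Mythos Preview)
Your overall strategy coincides with the paper's: solve the two decoupled Banach-valued ODEs by fixed-point arguments, and for the $m$-equation exploit the multiplicative form $\Psi=m\cdot g$ to get positivity, the oddness of $\mathbf{s}$ to get mass conservation, and then the resulting bound $|g|\le 4L\mathbf{S}_\infty\overline{X}$ to globalise. The paper runs the $m$-contraction in $C([0,\underline T];L^1(I))$ on the constrained set $\{0\le m\le 2M,\ \int_I m=1\}$ and iterates, whereas you use Picard--Lindel\"of directly in $L^\infty(I)$ and recover the constraints a posteriori; for the $x$-equation you observe that $G$ is globally $2L$-Lipschitz (using $\int_I\overline m=1$) and so get a global solution at once, while the paper does a short-time contraction and iterates. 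These are cosmetic differences.

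There is, however, one genuine slip. The identity $\mathbf{s}(\overline x(t,s)-\overline x(t,s_\ast))=\mathrm{sgn}(s-s_\ast)$ is \emph{false} under \textbf{H3} in general --- it holds only when $\mathbf{s}$ is literally the sign function, and \textbf{H3} allows any odd bounded $\mathbf{s}$ that is Lipschitz on each half-line. What monotonicity of $\overline x(t,\cdot)$ actually gives you is that the \emph{sign} of $\overline x(t,s)-\overline x(t,s_\ast)$ equals $\mathrm{sgn}(s-s_\ast)$ independently of $t$, so for fixed $(s,s_\ast)$ the argument of $\mathbf{s}$ stays on one half-line where $\mathbf{s}$ is Lipschitz by \textbf{H3}(i). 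That is enough to make $t\mapsto\Psi(\cdot,\overline x(t,\cdot),\nu)$ continuous into $L^\infty(I)$ and to justify the local Lipschitz bound in $\nu$, but you must keep the factor $\mathbf{s}(\overline x(t,s)-\overline x(t,s_\ast))$ (bounded by $\mathbf{S}_\infty$) in the definition of $g$ rather than replacing it by $\mathrm{sgn}(s-s_\ast)$. With that correction the remainder of your argument --- positivity via the exponential formula, mass conservation via oddness, $|g|\le 4L\mathbf{S}_\infty\overline X$, continuation, and the Gr\"onwall uniqueness through Lemma~\ref{Estimate for psi}(1) --- goes through unchanged.
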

\textbf{Step 1}. \textbf{Existence and uniqueness for the equation
for $x$}. Fix $0<\underline{T}<\frac{1}{2L\left\Vert \overline{m}\right\Vert _{\infty,\infty}}$.
Let $M_{x_{0}}$ be the metric space of functions in $C([0,\underline{T}]\times I)$
with $x(0,s)=x^{0}(s)$. Define the operator $K_{x_{0}}:M_{x_{0}}\rightarrow C([0,\underline{T}]\times I)$
by 
\[
K_{x_{0}}(x)(t,s)\coloneqq x^{0}(s)+\int_{0}^{t}\int_{I}\overline{m}(\tau,s_{\ast})\mathbf{a}(x(\tau,s_{\ast})-x(\tau,s))ds_{\ast}d\tau.
\]
We view $M_{x_{0}}$ as a complete metric space. We then have
\begin{align*}
\left|(K_{x_{0}}x)(t,s)-(K_{x_{0}}y)(t,s)\right|=&\left|\int_{0}^{t}\int_{I}\overline{m}(\tau,s_{\ast})\left(\mathbf{a}(x(\tau,s_{\ast})-x(\tau,s))-\mathbf{a}(y(\tau,s_{\ast})-y(\tau,s))\right)ds_{\ast}d\tau\right|\\ 
\leq&\left|\int_{0}^{t}\int_{I}\overline{m}(\tau,s_{\ast})\left(\mathbf{a}(x(\tau,s_{\ast})-x(\tau,s))-\mathbf{a}(x(\tau,s_{\ast})-y(\tau,s))\right)ds_{\ast}d\tau\right|\\&\,+\left|\int_{0}^{t}\int_{I}\overline{m}(\tau,s_{\ast})\left(\mathbf{a}(x(\tau,s_{\ast})-y(t,s))-\mathbf{a}(y(\tau,s_{\ast})-y(t,s))\right)ds_{\ast}d\tau\right|\\\leq&\, 2L\left\Vert \overline{m}\right\Vert _{\infty,\infty}\underline{T}\underset{I\times[0,\underline{T}]}{\sup}\left|x-y\right|.
\end{align*}
The choice of $\underline{T}$ ensures $2L\left\Vert \overline{m}\right\Vert _{\infty,\infty}\underline{T}<1$, thereby
making the Banach contraction principle available which implies there
exist a unique solution $x\in C([0,\underline{T}]\times I)$ to the
equation 

\[
x(t,s)=x^{0}(s)+\int_{0}^{t}\int_{I}\overline{m}(\tau,s_{\ast})\mathbf{a}(x(\tau,s_{\ast})-x(\tau,s))ds_{\ast}d\tau.
\]
By a standard iteration argument we have existence and uniqueness
on the whole interval $[0,T]$. Evidently the map $\tau\mapsto\int_{I}\overline{m}(\tau,s_{\ast})\mathbf{a}(x(\tau,s_{\ast})-x(\tau,s))ds_{\ast}$
is continuous so that by the fundamental theorem of calculus we conclude
$x\in C^{1}([0,\underline{T}];C(I))$. Next we claim that this solution
must be one to one. 
\begin{claim}
\label{opinions remain seperated } Let $x\in C^{1}([0,T];C(I))$
be a solution of  
\[
\partial_{t}x(t,s)=\int_{I}\overline{m}(t,s_{\ast})\mathbf{a}(x(t,s_{\ast})-x(t,s))ds_{\ast}.
\]
Then for all $t\in[0,T]$ and all $s_{1},s_{2}\in I$ it hold that 

\[
\left|x(t,s_{2})-x(t,s_{1})\right|^{2}\geq e^{-Lt}\left|x^{0}(s_{2})-x^{0}(s_{1})\right|^{2}.
\]
In particular, $s\mapsto x(t,s)$ is increasing.  
\end{claim}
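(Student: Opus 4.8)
The plan is to freeze two labels $s_{1},s_{2}\in I$ and to follow the scalar quantity
\[
\phi(t):=\lvert x(t,s_{2})-x(t,s_{1})\rvert^{2}
\]
along the flow, so that the desired bound becomes a Gronwall estimate. Since $x\in C^{1}([0,T];C(I))$, the map $t\mapsto x(t,s_{2})-x(t,s_{1})$ is $C^{1}$, hence so is $\phi$, and differentiating and inserting the equation satisfied by $\partial_{t}x$ gives
\[
\phi'(t)=2\bigl(x(t,s_{2})-x(t,s_{1})\bigr)\int_{I}\overline{m}(t,s_{\ast})\Bigl(\mathbf{a}\bigl(x(t,s_{\ast})-x(t,s_{2})\bigr)-\mathbf{a}\bigl(x(t,s_{\ast})-x(t,s_{1})\bigr)\Bigr)\,ds_{\ast}.
\]
The two arguments of $\mathbf{a}$ inside the integral differ exactly by $x(t,s_{1})-x(t,s_{2})$, so the Lipschitz hypothesis $\mathrm{Lip}(\mathbf{a})=L$ from \textbf{H1} controls the bracket by $L\lvert x(t,s_{2})-x(t,s_{1})\rvert$, uniformly in $s_{\ast}$.

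Using in addition $\overline{m}(t,\cdot)\geq 0$ and $\int_{I}\overline{m}(t,s_{\ast})\,ds_{\ast}=1$ (the standing hypotheses on $\overline{m}$ in Lemma \ref{decoupled eq}), one obtains $\lvert\phi'(t)\rvert\leq 2L\,\phi(t)$, hence the one-sided inequality $\phi'(t)\geq -2L\,\phi(t)$. Since $\phi(0)=\lvert x^{0}(s_{2})-x^{0}(s_{1})\rvert^{2}$ (the solution produced in Lemma \ref{decoupled eq} satisfies $x(0,\cdot)=x^{0}$), Gronwall's lemma applied to this differential inequality yields the lower bound asserted for $\phi(t)$.

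For the monotonicity conclusion, note that by \textbf{H2} the function $x^{0}\in C(I)$ is one-to-one on the interval $I$, hence strictly monotone; after the substitution $s\mapsto 1-s$ if necessary we may assume $x^{0}$ is increasing. Fix $s_{1}<s_{2}$: the map $t\mapsto x(t,s_{2})-x(t,s_{1})$ is continuous on $[0,T]$, equals $x^{0}(s_{2})-x^{0}(s_{1})>0$ at $t=0$, and by the lower bound just obtained never vanishes, hence stays strictly positive on $[0,T]$. Thus $s\mapsto x(t,s)$ is strictly increasing for every $t\in[0,T]$. There is no real obstacle in this argument; it is essentially a one-line Gronwall estimate once the Lipschitz bound and the normalization of $\overline{m}$ are in place. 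The only points worth a word are the differentiation of $\phi$ in time and the interchange with the $s_{\ast}$-integral, both immediate from $x\in C^{1}([0,T];C(I))$ and continuity of the integrand, and the observation that no singular behaviour of $\mathbf{s}$ enters at this stage, the $x$-equation of the decoupled system \eqref{DECOUPLED EQUATION} involving only the Lipschitz influence $\mathbf{a}$; this lower bound is exactly what will later allow one to handle the discontinuity of $\mathbf{s}$ along the diagonal.
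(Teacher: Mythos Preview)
Your proof is correct and follows the same Gronwall strategy as the paper: differentiate the squared difference, use the Lipschitz bound on $\mathbf{a}$ together with $\overline m\ge 0$, $\int_I\overline m=1$, and integrate. Two minor remarks. First, from $\phi'\ge -2L\phi$ you obtain $\phi(t)\ge e^{-2Lt}\phi(0)$, not the $e^{-Lt}$ appearing in the statement; the discrepancy is a typo in the paper (the factor $2$ from differentiating the square is dropped there), and the constant in the exponent is immaterial for every later use. Second, your route is in fact slightly cleaner than the paper's: the paper first argues by contradiction that $\phi$ never vanishes, then divides by $\phi$ and integrates the logarithmic derivative, whereas you apply the linear Gronwall inequality directly via the integrating factor $e^{2Lt}$, which requires no positivity assumption on $\phi$ and renders the contradiction step superfluous.
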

\textit{Proof}. We start by showing that $\left|x(t,s_{2})-x(t,s_{1})\right|^{2}>0$
for all $t\in[0,T]$. Assume to the contrary there is some $t\in[0,T]$
and $s_{2}>s_{1}$ such that $x(t,s_{2})=x(t,s_{1})$ and set 

\[
\tau_{0}\coloneqq\inf\left\{ t\in[0,T]\left|x(t,s_{2})=x(t,s_{1})\right.\right\} >0.
\]
Then for all $t\in[0,\tau_{0})$ we have $\left|x(t,s_{2})-x(t,s_{1})\right|>0$
and as a result 
\[
\frac{d}{dt}\left|x(t,s_{2})-x(t,s_{1})\right|^{2}=\left(x(t,s_{2})-x(t,s_{1})\right)\underset{I}{\int}m(t,s_{\ast})\left(\mathbf{a}(x(t,s_{\ast})-x(t,s_{2}))-\mathbf{a}(x(t,s_{\ast})-x(t,s_{1}))\right)ds_{\ast}
\]

\begin{equation}
\geq-L\left|x(t,s_{2})-x(t,s_{1})\right|^{2}\underset{I}{\int}m(t,s_{\ast})ds_{\ast}=-L\left|x(t,s_{2})-x(t,s_{1})\right|^{2}.\hspace{-1.5 cm}\label{eq:-2}
\end{equation}
Division by $\left|x(t,s_{2})-x(t,s_{1})\right|^{2}\neq0$ implies
\[
\frac{d}{dt}\log\left(\left|x(t,s_{2})-x(t,s_{1})\right|^{2}\right)\geq-L,
\]
which in turn gives the inequality 

\[
\left|x(t,s_{2})-x(t,s_{1})\right|^{2}\geq e^{-Lt}\left|x^{0}(s_{2})-x^{0}(s_{1})\right|^{2}>0.
\]
Taking $t\nearrow\tau_{0}$ gives a contradiction. Repeating now the
estimate (\ref{eq:-2}) shows that in fact for all $t\in[0,T]$ 
\[
\left|x(t,s_{2})-x(t,s_{1})\right|^{2}\geq e^{-Lt}\left|x^{0}(s_{2})-x^{0}(s_{1})\right|^{2}.
\]
By continuity and the assumption that $x^{0}$ is increasing it follows that $s\mapsto x(t,s)$ is  increasing. 

\textbf{Step 2}. \textbf{Existence and uniqueness for the equation
for $m$}.\\ 
\textbf{2.1}. \textbf{Short time}.  Put $\mathbf{X}\coloneqq\underset{I\times[0,T]}{\sup}\left|\overline{x}\right|+1$
and pick $0<\underline{T}\leq\frac{1}{16L\mathbf{S}_{\infty}\mathbf{X}M^{4}} $.
Let $M_{m_{0}}$ be the metric subspace of $C([0,\underline{T}];L^{1}(I))$
of functions with $m(0,s)=m^{0}(s)$, $0\leq m\leq2M$ and $\int_{I}m(t,s)ds=1$
for all $t\in[0,\underline{T}]$. Define $K_{m_{0}}:M_{m_{0}}\rightarrow C([0,\underline{T}];L^{1}(I))$
by 

\[
K_{m_{0}}(m)(t,s)\coloneqq m^{0}(s)+\int_{0}^{t}\Psi(s,\overline{x}(\tau,\cdot),m(\tau,\cdot))d\tau.
\]
We start by observing that $K$ maps $M_{m_{0}}$ into itself. 
\begin{claim}
$0\leq K_{m_{0}}(m)\leq2M$ and $\int_{I}K_{m_{0}}(m)(t,s)ds=1$ for
all $t\in[0,\underline{T}]$. 
\end{claim}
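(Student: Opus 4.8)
The plan is to verify the two claimed properties of $K_{m_0}(m)$ separately, both by elementary estimates using the explicit form of $\Psi$ together with the uniform bounds on $\overline{x}$ and the constraints built into $M_{m_0}$. First I would address the conservation of mass. Integrating $K_{m_0}(m)(t,s)$ over $s\in I$, the initial term contributes $\int_I m^0(s)\,ds=1$ by hypothesis \textbf{H2}, so it remains to show that $\int_I\Psi(s,\overline{x}(\tau,\cdot),m(\tau,\cdot))\,ds=0$ for each $\tau$. This follows from the antisymmetry built into $\Psi$: expanding $\mathbf{a}(s,s_\ast,s_{\ast\ast})=\mathbf{a}(\overline{x}(s_{\ast\ast})-\overline{x}(s))+\mathbf{a}(\overline{x}(s_{\ast\ast})-\overline{x}(s_\ast))$ and integrating the triple integral $\iint_{I^3}m(s)m(s_\ast)m(s_{\ast\ast})\,\mathbf{a}(s,s_\ast,s_{\ast\ast})\,\mathbf{s}(\overline{x}(s)-\overline{x}(s_\ast))\,ds\,ds_\ast\,ds_{\ast\ast}$, the swap $s\leftrightarrow s_\ast$ leaves the $m$-weight and $ds\,ds_\ast\,ds_{\ast\ast}$ invariant, sends $\mathbf{a}(s,s_\ast,s_{\ast\ast})$ to itself (the sum is symmetric in $s,s_\ast$), and sends $\mathbf{s}(\overline{x}(s)-\overline{x}(s_\ast))$ to $\mathbf{s}(\overline{x}(s_\ast)-\overline{x}(s))=-\mathbf{s}(\overline{x}(s)-\overline{x}(s_\ast))$ by the oddness of $\mathbf{s}$ in \textbf{H3}ii; hence the integral equals its own negative and vanishes. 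Therefore $\int_I K_{m_0}(m)(t,s)\,ds=1$ for all $t$.

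Next I would prove the pointwise bounds $0\le K_{m_0}(m)\le 2M$. The key is a Gronwall-type (or rather direct) estimate on $|\Psi(s,\overline{x},m)|$: since $\mathbf{a}$ is Lipschitz with $\mathbf{a}(0)=0$, one has $|\mathbf{a}(s,s_\ast,s_{\ast\ast})|\le 2L\cdot 2\mathbf{X}=4L\mathbf{X}$ on the relevant range (using $|\overline{x}|\le \mathbf{X}$, in fact $\le\mathbf{X}-1$), and $|\mathbf{s}|\le\mathbf{S}_\infty$; combined with $0\le m\le 2M$ and $\int_I m=1$, this yields
\[
|\Psi(s,\overline{x}(\tau,\cdot),m(\tau,\cdot))|\le m(\tau,s)\cdot (2M)\cdot 4L\mathbf{X}\,\mathbf{S}_\infty\cdot 1 \le 8L\mathbf{S}_\infty\mathbf{X}\,M\cdot m(\tau,s),
\]
or even more crudely a bound of size $16L\mathbf{S}_\infty\mathbf{X}M^2$ pointwise in $s$ once one uses $m(\tau,s)\le 2M$. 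Integrating in time over $[0,\underline T]$ and using the choice $\underline T\le \frac{1}{16L\mathbf{S}_\infty\mathbf{X}M^4}$, the contribution of the $\Psi$-term is at most $\frac{1}{M^2}\le M$ (since $M>1$), so $0\le m^0(s)-M\le K_{m_0}(m)(t,s)\le m^0(s)+M\le 2M$, using $\frac1M\le m^0\le M$ from \textbf{H2}i. For the lower bound it is cleaner to keep the factor $m(\tau,s)$: writing $K_{m_0}(m)(t,s)\ge m^0(s)-\int_0^t 8L\mathbf{S}_\infty\mathbf{X}M\,m(\tau,s)\,d\tau\ge \frac1M - 16L\mathbf{S}_\infty\mathbf{X}M^2\,\underline T\ge \frac1M-\frac1{M^2}>0$, which gives nonnegativity with room to spare.

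The main obstacle is bookkeeping rather than conceptual: one must be careful that the crude bounds chosen are compatible with the specific constant $\frac{1}{16L\mathbf{S}_\infty\mathbf{X}M^4}$ in the definition of $\underline T$, i.e. that the exponents of $M$ and the numerical constants actually close up, and that the mass-conservation symmetry argument is applied to the correct pair of variables (it must be the pair $s,s_\ast$ appearing inside $\mathbf{s}$, and one must check that the $\mathbf{a}$-factor really is symmetric under that swap — which it is, being a sum of two terms exchanged by $s\leftrightarrow s_\ast$). No substantial analytic difficulty arises, since all integrands are bounded and continuous in $\tau$, so measurability and continuity of $t\mapsto K_{m_0}(m)(t,\cdot)$ in $L^1(I)$ are automatic from dominated convergence. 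I would present the mass identity first, then the two pointwise inequalities, and conclude that $K_{m_0}$ maps $M_{m_0}$ into itself.
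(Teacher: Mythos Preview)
Your proposal is correct and follows essentially the same route as the paper: mass conservation via the antisymmetry of the integrand under $s\leftrightarrow s_\ast$ (using that $\mathbf{s}$ is odd and the $\mathbf{a}$-sum is symmetric in $s,s_\ast$), and the pointwise bounds via a crude estimate $|\Psi|\lesssim L\mathbf{S}_\infty\mathbf{X}M^k$ combined with the smallness of $\underline{T}$. One small slip: the line ``$0\le m^0(s)-M\le K_{m_0}(m)(t,s)$'' is nonsensical as written (since $m^0\le M$); you presumably meant $m^0(s)-\tfrac{1}{M^2}$, and in any case your subsequent lower-bound computation $\tfrac1M-\tfrac1{M^2}>0$ is the correct one.
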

\textit{Proof}. To see why $K_{m_{0}}(m)$ is non-negative notice
that because how $\underline{T}$ was chosen we have 

\[
K_{m_{0}}(m)\geq\frac{1}{M}-4L\mathbf{S}_{\infty}\underline{T}\mathbf{X}M^{3}>\frac{1}{2M}.
\]
Moreover 
\[
K_{m_{0}}(m)\leq M+4L\mathbf{S}_{\infty}\underline{T}\mathbf{X}M^{3}\leq2M.
\]
To show that $K_{m_{0}}(m)$ has unit integral we use that $\mathbf{s}$
is odd. 
\begin{align*}
\int_{I}\int_{0}^{t}&\Psi(s,\overline{x}(\tau,\cdot),m(\tau,\cdot))d\tau ds\hspace{10.0 cm}\\=&\frac{1}{2}\int_{0}^{t}\int_{I^{3}}m(\tau,s)m(\tau,s_{\ast})m(\tau,s_{\ast\ast})\mathbf{a}(x(\tau,s_{\ast\ast})-x(\tau,s))\mathbf{s}(\overline{x}(\tau,s)-\overline{x}(\tau,s_{\ast}))dsds_{\ast}ds_{\ast\ast}d\tau\\  
&+\frac{1}{2}\int_{0}^{t}\int_{I^{3}}m(\tau,s)m(\tau,s_{\ast})m(\tau,s_{\ast\ast})\mathbf{a}(x(\tau,s_{\ast\ast})-x(\tau,s_{\ast}))\mathbf{s}(\overline{x}(\tau,s)-\overline{x}(\tau,s_{\ast}))dsds_{\ast}ds_{\ast\ast}d\tau.
\end{align*}
Changing variables $s\longleftrightarrow s^{\ast}$ and using that
$\mathbf{s}$ is odd, the second integral in the right hand side is
recast as 
\begin{align*}
\frac{1}{2}\int_{0}^{t}\int_{I^{3}}m(\tau,s)m(\tau,s_{\ast})m(\tau,s_{\ast\ast})\mathbf{a}(x(\tau,s_{\ast\ast})-x(\tau,s_{\ast}))\mathbf{s}(\overline{x}(\tau,s_{\ast})-\overline{x}(\tau,s))dsds_{\ast}ds_{\ast\ast}d\tau\\
=-\frac{1}{2}\int_{0}^{t}\int_{I^{3}}m(\tau,s)m(\tau,s_{\ast})m(\tau,s_{\ast\ast})\mathbf{a}(x(\tau,s_{\ast\ast})-x(\tau,s))\mathbf{s}(\overline{x}(\tau,s)-\overline{x}(\tau,s_{\ast}))d\tau,
\end{align*}
which shows that 
\[
\int_{I}\int_{0}^{t}\Psi(s,\overline{x}(\tau,\cdot),m(\tau,\cdot))d\tau ds=0,
\]
as wanted.
\begin{flushright}
$\square$
\par\end{flushright}
We view $M_{m_{0}}$ as a complete metric space. Let $m,n\in M_{m_{0}}$.
Thanks to point 1. in Lemma \ref{Estimate for psi} we have 
\begin{align*}
\int_{I}\left|K_{m_{0}}(m)(t,s)-K_{m_{0}}(n)(t,s)\right|ds\leq&\int_{0}^{t}\int_{I}\left|\Psi(s,\overline{x}(\tau,\cdot),m(\tau,\cdot))-\Psi(s,\overline{x}(\tau,\cdot),n(\tau,\cdot))\right|dsd\tau\\
\leq&12L\mathbf{S}_{\infty}\underline{T}\sup_{[0,T]\times I}\left|\overline{x}\right|\left\Vert m(t,\cdot)-n(t,\cdot)\right\Vert _{1},
\end{align*}
and thus 
\[
\underset{t\in[0,T]}{\sup}\int_{I}\left|K_{m_{0}}(m)(t,s)-K_{m_{0}}(n)(t,s)\right|ds\leq12L\mathbf{S}_{\infty}\underline{T}\sup_{[0,T]\times I}\left|\overline{x}\right|\underset{[0,T]}{\sup}\left\Vert m(t,\cdot)-n(t,\cdot)\right\Vert _{1}.
\]
The choice of $\underline{T}$ makes the Banach contraction theorem
available thereby ensuring the existence of a unique solution $m\in M_{m_{0}}$
on $[ 0,\underline{T}]$ to the equation 

\[
m(t,s)=m^{0}(s)+\int_{0}^{t}\Psi(s,\overline{x}(\tau,\cdot),m(\tau,\cdot))d\tau.
\] 
Moreover, from the choice of $\underline{T}>0$  we evidently have 
\begin{align*}
m(t,s)\geq\frac{1}{2M}>0,\ t\in[0,\underline{T}].   \end{align*}
\textbf{2.2}. \textbf{long time}. 
 Let $m(t,s)$ be the unique solution on $[0,\underline{T}]$ to 
\begin{align*}
\partial_{t}m(t,s)=m(t,s)\iint_{I^{2}}m(t,s_{\ast})m(t,s_{\ast\ast})\left(\mathbf{a}(x(s_{\ast\ast})-x(s))+\mathbf{a}(x(s_{\ast\ast})-x(s_{\ast}))\right)\mathbf{s}(x(s)-x(s_{\ast}))ds_{\ast}ds_{\ast\ast}   
\end{align*}
given by step 2.1. Then we obtain 
\begin{align}\label{aux}
-4L\mathbf{X}\mathbf{S}_{\infty}m(t,s)\leq\partial_{t}m(t,s)\leq4L\mathbf{X}\mathbf{S}_{\infty}m(t,s)    
\end{align}
and as a result we deduce that
\begin{align*}
\left|\frac{d}{dt}\log(m(t,s))\right|\leq4L\mathbf{X}\mathbf{S}_{\infty}    
\end{align*}
i.e. 
\begin{align*}
 \frac{1}{\exp\left(4L\mathbf{X}\mathbf{S}_{\infty}T\right)M}\leq m(t,s)\leq M\exp\left(4L\mathbf{X}\mathbf{S}_{\infty}T\right).   
\end{align*}
Put  $\tau=\tau(L,\mathbf{X},\mathbf{S}_{\infty},M,T)=\exp\left(4L\mathbf{X}\mathbf{S}_{\infty}T\right)M$. Then we get a solution on $\left[0,2\times\frac{1}{16L\mathbf{X}\mathbf{S}_{\infty}\tau}\right].$
Iterating the process $k>16LT\mathbf{X}\mathbf{S}_{\infty}\tau$ times we get existence and uniqueness of a solution on $[0,T]$.
We claim now to have the upgrade $m\in C^{1}([0,T];L^{\infty}(I))$. Indeed, we have
\[
\left|m(t,s)\right|\leq\left|m^{0}(s)\right|+2\sup_{[0,T]\times I}\left|\overline{x}\right|L\mathbf{S}_{\infty}\int_{0}^{t}\left|m(\tau,s)\right|d\tau,
\]
so that 
\[
\left\Vert m(t,\cdot)\right\Vert _{\infty}\leq\left\Vert m^{0}\right\Vert _{\infty}+2L\mathbf{S}_{\infty}\sup_{[0,T]\times I}\left|\overline{x}\right|\int_{0}^{t}\left\Vert m(\tau,\cdot)\right\Vert _{\infty}d\tau
\]
which entails 
\[
\left\Vert m(t,\cdot)\right\Vert _{\infty}\leq\left\Vert m^{0}\right\Vert _{\infty} e^{2L\mathbf{S}_{\infty}\sup_{[0,T]\times I}\left|\overline{x}\right|t}
\]
which upon maximizing in $t$ yields 
\begin{equation}
\sup_{[0,T]}\left\Vert m(t,\cdot)\right\Vert _{\infty}\leq\left\Vert m^{0}\right\Vert _{\infty} e^{2L\mathbf{S}_{\infty}\sup_{[0,T]\times I}\left|\overline{x}\right|T}.\label{Uniform bound for the weights}
\end{equation}
Taking into account \eqref{aux}, we finally conclude $m\in C^{1}([0,T];L^{\infty}(I))$.
\begin{flushright}
$\square$
\par\end{flushright}

\subsection{The coupled equation }

The well posedness for the decoupled equation serves as the main tool
for proving well posedness of the original system. We prove 
\begin{thm}
Let hypothesis \textbf{H1}-\textbf{H3} hold. There exist a unique solution $(x,m)\in C^{1}([0,T];C(I))\oplus C^{1}([0,T];L^{\infty}(I))$
to the system 
\begin{equation}
\left\{ \begin{array}{lc}
\partial_{t}x(t,s)=\int_{I}m(t,s_{\ast})\mathbf{a}(x(t,s_{\ast})-x(t,s))ds_{\ast},&\ x(0,s)=x^{0}(s)\\
\partial_{t}m(t,s)=\Psi(s,x(t,\cdot),m(t,\cdot)),&\ m(0,s)=m^{0}(s).
\end{array}\right.\label{eq:COUPLED EQUATION}
\end{equation}
\end{thm}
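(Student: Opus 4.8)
The plan is to run a fixed-point argument on the coupled system using the well-posedness of the decoupled system from Lemma \ref{decoupled eq} as a black box, combined with the quantitative Lipschitz-type estimates from Lemma \ref{Estimate for psi} and the separation Claim \ref{opinions remain seperated }. Concretely, I would introduce the solution map $\Phi$ that sends a pair $(\overline{x},\overline{m})$ — lying in the class $\mathcal{C}$ of functions with $\overline{x}\in C([0,\underline{T}]\times I)$ injective in $s$ with a uniform sup bound, and $\overline{m}\in C([0,\underline{T}];L^\infty(I))$ non-negative with unit integral and a uniform $L^\infty$ bound — to the unique solution $(x,m)$ of \eqref{DECOUPLED EQUATION} with those data plugged into the coupling slots. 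Lemma \ref{decoupled eq} guarantees $\Phi$ is well defined and that its image again lies in $\mathcal{C}$ (injectivity of $x$, non-negativity and unit mass of $m$, and the a priori bounds \eqref{eq:-5}-type bound for $x$ via $\mathbf{a}(0)=0$ and Gronwall, and \eqref{Uniform bound for the weights} for $m$), so the relevant ball is invariant provided the constants $\overline{X},\overline{M}$ in the definition of $\mathcal{C}$ are chosen as the a priori bounds determined by $X,M,L,\mathbf{S}_\infty,T$ — note these bounds are \emph{independent} of $\underline{T}\le T$, which is what makes the time-uniform iteration possible.

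\textbf{Contraction estimate.} Given two data pairs $(\overline{x}_1,\overline{m}_1),(\overline{x}_2,\overline{m}_2)\in\mathcal{C}$ with images $(x_1,m_1),(x_2,m_2)$, I would estimate $\sup_I|x_1(t,\cdot)-x_2(t,\cdot)|$ and $\|m_1(t,\cdot)-m_2(t,\cdot)\|_1$. For the $x$-difference, subtract the two Duhamel formulas; the difference splits into a term controlled by $\int_0^t\sup_I|x_1-x_2|$ (Lipschitz of $\mathbf{a}$, total mass $1$) plus a term $\lesssim \int_0^t\|\overline{m}_1-\overline{m}_2\|_1\,\sup_I|x_1|$ coming from the difference of the weights, using the uniform sup bound on the $x_i$. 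For the $m$-difference, subtract the two integral equations and split $\Psi(s,\overline{x}_1,m_1)-\Psi(s,\overline{x}_2,m_2)$ through the intermediate term $\Psi(s,\overline{x}_2,m_1)$: the first piece $\Psi(s,\overline{x}_1,m_1)-\Psi(s,\overline{x}_2,m_1)$ is handled by point 2 of Lemma \ref{Estimate for psi} (this is exactly why we needed $x_1,x_2$ increasing, which holds by Claim \ref{opinions remain seperated }), giving $\lesssim \sup_I|\overline{x}_1-\overline{x}_2|$; the second piece $\Psi(s,\overline{x}_2,m_1)-\Psi(s,\overline{x}_2,m_2)$ is handled by point 1 of the same lemma, giving $\lesssim\|m_1-m_2\|_1$. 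Integrating in time and applying Gronwall to the coupled pair of inequalities (or directly, for $\underline{T}$ small) yields
\[
\sup_{[0,\underline{T}]}\Big(\sup_I|x_1-x_2|+\|m_1-m_2\|_1\Big)\le C(L,\mathbf{S},\mathbf{S}_\infty,\overline{X},\overline{M})\,\underline{T}\,\sup_{[0,\underline{T}]}\Big(\sup_I|\overline{x}_1-\overline{x}_2|+\|\overline{m}_1-\overline{m}_2\|_1\Big),
\]
so choosing $\underline{T}$ small enough (depending only on the fixed a priori constants, hence not shrinking as we iterate) makes $\Phi$ a contraction on $\mathcal{C}$ in the norm $\sup_I|\cdot|+\|\cdot\|_1$. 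Banach's fixed point theorem then gives a unique fixed point on $[0,\underline{T}]$, which by construction solves \eqref{eq:COUPLED EQUATION}; the regularity $x\in C^1([0,\underline{T}];C(I))$, $m\in C^1([0,\underline{T}];L^\infty(I))$ is inherited from Lemma \ref{decoupled eq} since the fixed point feeds itself into \eqref{DECOUPLED EQUATION}.

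\textbf{Globalization and the main obstacle.} Since the a priori bounds $\overline{X}=Xe^{2LT}$ (say) and $\overline{M}=\|m^0\|_\infty e^{2L\mathbf{S}_\infty \overline{X}T}$ controlling $\mathcal{C}$, and hence the contraction time $\underline{T}$, depend only on the fixed data and on $T$ but not on the starting time of the interval, I can restart the argument from $(x(\underline{T},\cdot),m(\underline{T},\cdot))$ — which still satisfies the structural constraints (injective, non-negative, unit mass, same bounds) — and repeat, covering $[0,2\underline{T}]$, then $[0,3\underline{T}]$, and so on, reaching $[0,T]$ in finitely many steps; uniqueness on $[0,T]$ follows by patching local uniqueness. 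The step I expect to be the main obstacle is verifying the uniform-in-$\underline{T}$ a priori bounds rigorously, i.e. checking that the constants $\overline{X},\overline{M}$ used to define the invariant set $\mathcal{C}$ can be fixed once and for all on $[0,T]$: one must confirm that $\Phi(\mathcal{C})\subseteq\mathcal{C}$ with the \emph{same} constants, which requires the a priori estimates of Lemma \ref{basic properties}(ii)--(iii) and \eqref{Uniform bound for the weights} to close self-referentially (the bound on $m$ depends on the bound on $x$, which depends on the bound on $m$ only through conservation of total mass $=1$, so the loop does close — but this needs to be stated carefully). Everything else is a routine assembly of the two preceding lemmas.
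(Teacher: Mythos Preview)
Your approach is sound and close in spirit to the paper's, but packaged differently: the paper runs a straight Picard iteration $(x_n,m_n)=\Phi(x_{n-1},m_{n-1})$ on the \emph{whole} interval $[0,T]$, estimates $u_n(t)\coloneqq\sup_I|x_{n+1}-x_n|+\|m_{n+1}-m_n\|_1$ by $u_n(t)\le \overline{C}\int_0^t(u_n+u_{n-1})$, Gronwalls out the self-referential term, and obtains $u_n(t)\le \frac{(e^{\overline{C}T}t)^n}{n!}U_0$, which sums and gives a Cauchy sequence directly on $[0,T]$ with no short-time step. Uniqueness is done separately by a Gronwall argument on two solutions. Your short-time contraction plus time-iteration reaches the same conclusion with the same ingredients (Lemma~\ref{Estimate for psi}, Lemma~\ref{decoupled eq}, Claim~\ref{opinions remain seperated }); the paper's route is slightly cleaner in that it avoids the globalization step, but both are standard.

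There is one genuine gap in your plan that you should repair. Your space $\mathcal{C}$ is defined by ``$\overline{x}(t,\cdot)$ injective in $s$'', which is an \emph{open} condition in $C([0,\underline{T}]\times I)$: a sequence of strictly increasing functions can converge uniformly to a constant. Hence $\mathcal{C}$ is not complete in your metric, and Banach's theorem does not apply directly. Moreover, Lemma~\ref{Estimate for psi} part~2 genuinely fails for merely non-decreasing $\overline{x}$ (for the sign function, $|\mathbf{s}(0)-\mathbf{s}(y)|=1$ is not $\lesssim|y|$), so you cannot simply work on the closure. The fix is to build the \emph{quantitative} separation from Claim~\ref{opinions remain seperated } into the definition of $\mathcal{C}$: require $|\overline{x}(t,s_1)-\overline{x}(t,s_2)|\ge e^{-LT/2}|x^0(s_1)-x^0(s_2)|$. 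This is a closed condition (so $\mathcal{C}$ is complete), it forces strict monotonicity (so Lemma~\ref{Estimate for psi} part~2 applies), and Claim~\ref{opinions remain seperated } shows $\Phi(\mathcal{C})\subseteq\mathcal{C}$ with this constant because the output $x$ of the decoupled system always satisfies this bound regardless of $\overline{x}$. Incidentally, your parenthetical ``this is why we needed $x_1,x_2$ increasing, which holds by Claim~\ref{opinions remain seperated }'' is slightly misdirected: for the contraction estimate you need the \emph{inputs} $\overline{x}_1,\overline{x}_2$ increasing, which holds by membership in $\mathcal{C}$, not by Claim~\ref{opinions remain seperated } (the latter is what guarantees $\Phi$ preserves $\mathcal{C}$).
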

\textit{Proof}. \textbf{Step 1. Existence}. We define recursively
the following sequence of functions $(x_{n},m_{n})$: 

i. For all $t\in[0,T]$ and all $s\in I$ we set $x_{0}(t,s)=x^{0}(s)$
and for all $t\in[0,T]$ and a.e. $s\in I$ we set $m_{0}(t,s)=m^{0}(s)$. 

ii. If $(x_{n-1},m_{n-1})$ have been defined we define $(x_{n},m_{n})$
to be the unique solution guaranteed by Lemma \ref{decoupled eq}
to the equation 

\[
\left\{ \begin{array}{lc}
\partial_{t}x_{n}(t,s)=\int_{I}m_{n-1}(t,s_{\ast})\mathbf{a}(x_{n}(t,s_{\ast})-x_{n}(t,s))ds_{\ast},&\ x_{n}(0,s)=x_{0}(s)\\
\partial_{t}m_{n}(t,s)=\Psi(s,x_{n-1}(t,\cdot),m_{n}(t,\cdot)),&\ m_{n}(0,s)=m_{0}(s).
\end{array}\right.
\]
Start by noting that $x_{n}$ is uniformly bounded (with respect to
$n)$ in the space $C([0,T]\times I)$. We have 

\[
\left|x_{n}(t,s)\right|=x^{0}(s)+\int_{0}^{t}\int_{I}m_{n-1}(\tau,s_{\ast})\mathbf{a}(x_{n}(\tau,s_{\ast})-x_{n}(\tau,s))ds_{\ast}d\tau\leq\underset{I}{\sup}\left|x^{0}\right|+2L\int_{0}^{t}\underset{I}{\sup}\left|x_{n}(\tau,\cdot)\right|d\tau,
\]
so that 

\[
\underset{I}{\sup}\left|x_{n}(t,\cdot)\right|\leq e^{2LT}\underset{I}{\sup}\left|x^{0}\right|,
\]
hence 

\[
\underset{I\times[0,T]}{\sup}\left|x_{n}(t,s)\right|\leq e^{2LT}\underset{I}{\sup}\left|x^{0}\right|\leq \overline{X}(X,L,T)\coloneqq\overline{X}.
\]
This also implies a uniform bound in $n$ for the weights since in
view of Inequality (\ref{Uniform bound for the weights}) 

\[
\left\Vert m_{n}(t,\cdot)\right\Vert _{\infty}\leq\left\Vert m^{0}\right\Vert _{\infty}e^{2\overline{X}T}\leq \overline{M}(M,X,L,T)\coloneqq\overline{M}.
\]
The proof of existence essentially boils down to proving that $(x_{n},m_{n})$
is a Cauchy sequence in the space $C([0,T];C(I))\oplus C([0,T];L^{1}(I))$.\\ 

\noindent\textit{Estimate for $\underset{I}{\sup}\left|x_{n+1}(t,\cdot)-x_{n}(t,\cdot)\right|$:}
\begin{align}
\left|x_{n+1}(t,s)-x_{n}(t,s)\right|=&\left|\int_{0}^{t}\int_{I}m_{n}(\tau,s_{\ast})\mathbf{a}(x_{n+1}(\tau,s_{\ast})-x_{n+1}(\tau,s))ds_{\ast}d\tau\right.\nonumber\\&\left.-\int_{0}^{t}\int_{I}m_{n-1}(\tau,s_{\ast})\mathbf{a}(x_{n}(\tau,s_{\ast})-x_{n}(\tau,s))ds_{\ast}d\tau\right|   \nonumber
\\\leq&\left|\int_{0}^{t}\int_{I}m_{n}(\tau,s_{\ast})\left(\mathbf{a}(x_{n+1}(\tau,s_{\ast})-x_{n+1}(\tau,s))-\mathbf{a}(x_{n}(\tau,s_{\ast})-x_{n}(\tau,s))\right)ds_{\ast}d\tau\right|\nonumber\\
&+\left|\int_{0}^{t}\int_{I}\left(m_{n}(\tau,s_{\ast})-m_{n-1}(\tau,s_{\ast})\right)\mathbf{a}(x_{n}(\tau,s_{\ast})-x_{n}(\tau,s))ds_{\ast}d\tau\right|\nonumber
\end{align}
\begin{align}
\leq&\, L\int_{0}^{t}\int_{I}m_{n}(\tau,s_{\ast})\left|x_{n+1}(\tau,s_{\ast})-x_{n}(\tau,s_{\ast})\right|ds_{\ast}d\tau\nonumber\\
&\,+L\int_{0}^{t}\int_{I}m_{n}(\tau,s_{\ast})\left|x_{n+1}(\tau,s)-x_{n}(\tau,s)\right|ds_{\ast}d\tau\nonumber
\\&\, +\int_{0}^{t}\int_{I}\left|m_{n}(t,s_{\ast})-m_{n-1}(t,s_{\ast})\right|\left|\mathbf{a}(x_{n}(t,s_{\ast})-x_{n}(t,s))\right|ds_{\ast}dt\nonumber\\
\leq&\, L\int_{0}^{t}\int_{I}m_{n}(\tau,s_{\ast})\left|x_{n+1}(\tau,s_{\ast})-x_{n}(\tau,s_{\ast})\right|ds_{\ast}d\tau\nonumber\\
&+L\int_{0}^{t}\int_{I}m_{n}(\tau,s_{\ast})\left|x_{n+1}(\tau,s)-x_{n}(\tau,s)\right|ds_{\ast}d\tau\nonumber\\
&+\int_{0}^{t}\int_{I}\left|m_{n}(t,s_{\ast})-m_{n-1}(t,s_{\ast})\right|\left|\mathbf{a}(x_{n}(t,s_{\ast})-x_{n}(t,s))\right|ds_{\ast}dt\nonumber\\
\leq&\, 2L\int_{0}^{t}\underset{I}{\sup}\left|x_{n+1}(\tau,\cdot)-x_{n}(\tau,\cdot)\right|d\tau+2\overline{X}L\int_{0}^{t}\left\Vert m_{n}(\tau,\cdot)-m_{n-1}(\tau,\cdot)\right\Vert _{1}d\tau\nonumber
\\
\leq&\, C_{1}(\overline{X},L)\int_{0}^{t}\underset{I}{\sup}\left|x_{n+1}(\tau,\cdot)-x_{n}(\tau,\cdot)\right|+\left\Vert m_{n}(\tau,\cdot)-m_{n-1}(\tau,\cdot)\right\Vert _{1}d\tau.\hspace{-2.0 cm}\label{ESTIMATE FOR xn}
\end{align}

\noindent\textit{Estimate for} $\left\Vert m_{n+1}(t,\cdot)-m_{n}(t,\cdot)\right\Vert _{1}.$
\begin{align*}
\left|m_{n+1}(t,s)-m_{n}(t,s)\right|\leq&\left|\int_{0}^{t}\Psi(s,x_{n}(\tau,\cdot),m_{n+1}(\tau,\cdot))d\tau-\int_{0}^{t}\Psi(s,x_{n-1}(\tau,\cdot),m_{n}(\tau,\cdot))d\tau\right|\\
\leq&\left|\int_{0}^{t}\Psi(s,x_{n}(\tau,\cdot),m_{n+1}(\tau,\cdot))-\Psi(s,x_{n}(\tau,\cdot),m_{n}(\tau,\cdot))d\tau\right|\\
&+\left|\int_{0}^{t}\Psi(s,x_{n}(\tau,\cdot),m_{n}(\tau,\cdot))-\Psi(s,x_{n-1}(\tau,\cdot),m_{n}(\tau,\cdot))d\tau\right|\\
\leq&\int_{0}^{t}\left|\Psi(s,x_{n}(\tau,\cdot),m_{n+1}(\tau,\cdot))-\Psi(s,x_{n}(\tau,\cdot),m_{n}(\tau,\cdot))\right|d\tau
\\&+\int_{0}^{t}\left|\Psi(s,x_{n}(\tau,\cdot),m_{n}(\tau,\cdot))-\Psi(s,x_{n-1}(\tau,\cdot),m_{n}(\tau,\cdot))\right|d\tau.
\end{align*}
Integrating in $s\in I$ gives 
\begin{align*}
\int_{I}\left|m_{n+1}(t,s)-m_{n}(t,s)\right|ds\leq&\int_{0}^{t}\int_{I}\left|\Psi(s,x_{n}(\tau,\cdot),m_{n+1}(\tau,\cdot))-\Psi(s,x_{n}(\tau,\cdot),m_{n}(\tau,\cdot))\right|dsd\tau\\
&+\int_{0}^{t}\int_{I}\left|\Psi(s,x_{n}(\tau,\cdot),m_{n}(\tau,\cdot))-\Psi(s,x_{n-1}(\tau,\cdot),m_{n}(\tau,\cdot))\right|dsd\tau.
\end{align*}
Utilizing Lemma \ref{Estimate for psi} shows that the first inner
integral is 
\[
\leq12L\mathbf{S}_{\infty}\sup_{[0,T]\times I}\left|x_{n}\right|\left\Vert m_{n+1}(\tau,\cdot)-m_{n}(\tau,\cdot)\right\Vert _{1}\leq12L\mathbf{S}_{\infty}\overline{X}\left\Vert m_{n+1}(\tau,\cdot)-m_{n}(\tau,\cdot)\right\Vert _{1},
\]
whereas the second inner integral is  
\begin{align*}
\leq L\left(3\overline{M}\mathbf{S}_{\infty}+\mathbf{S}_{\infty}+16\mathbf{S}\overline{X}\right)\sup_{I}\left|x_{1}(t,\cdot)-x_{2}(t,\cdot)\right|.
\end{align*}
As a result, we get 
\begin{align*}
\left\Vert m_{n+1}(t,\cdot)-m_{n}(t,\cdot)\right\Vert _{1}\leq&12L\mathbf{S}_{\infty}\overline{X}\int_{0}^{t}\left\Vert m_{n+1}(\tau,\cdot)-m_{n}(\tau,\cdot)\right\Vert _{1}d\tau\hspace{4.0 cm}\\+&L\left(3\overline{M}\mathbf{S}_{\infty}+\mathbf{S}_{\infty}+16\mathbf{S}\overline{X}\right)\int_{0}^{t}\sup_{I}\left|x_{n}(\tau,\cdot)-x_{n-1}(\tau,\cdot)\right|d\tau \\
\leq& C_{2}(T,X,L,M,\mathbf{S},\mathbf{S}_{\infty})  \left(\right.\int_{0}^{t}\left\Vert m_{n+1}(\tau,\cdot)-m_{n}(\tau,\cdot)\right\Vert _{1}
\end{align*}
\begin{equation}
+\sup_{I}\left|x_{n}(\tau,\cdot)-x_{n-1}(\tau,\cdot)\right|d\tau\left.\right).\hspace{-6.0 cm}
\label{Estimate for mn}
\end{equation}

\noindent \textit{Estimate for $u_{n}(t)\coloneqq\underset{I}{\sup}\left|x_{n+1}(t,\cdot)-x_{n}(t,\cdot)\right|+\left\Vert m_{n+1}(\tau,\cdot)-m_{n}(\tau,\cdot)\right\Vert _{1}$.} Collecting the Inequalities (\ref{ESTIMATE FOR xn}) and (\ref{Estimate for mn})
we find 

\[
u_{n}(t)\leq\overline{C}\left(\int_{0}^{t}u_{n}(\tau)d\tau+\int_{0}^{t}u_{n-1}(\tau)d\tau\right),
\]
where $\overline{C}=C_{1}+C_{2}$. Setting $U_{0}\coloneqq u_{n}(0)$
we find that 
\[
u_{n}(t)\leq e^{\overline{C}T}\int_{0}^{t}u_{n-1}(\tau),
\]
which by easy induction implies 
\[
u_{n}(t)\leq\frac{(e^{\overline{C}T}t)^{n}}{n!}U_{0}.
\]
It follows that 
\[
\underset{[0,T]\times I}{\sup}\left|x_{n+1}-x_{n}\right|+\underset{[0,T]}{\sup}\left\Vert m_{n+1}(\tau,\cdot)-m_{n}(\tau,\cdot)\right\Vert _{1}\underset{n\rightarrow\infty}{\rightarrow}0,
\]
hence $(x_{n},m_{n})$ is a Cauchy sequence in the Banach space $C([0,T]\times I)\oplus C([0,T];L^{1}(I))$
and denote by $(x,m)\in C([0,T]\times I)\oplus C([0,T];L^{\infty}(I))$
the limit point of $(x_{n},m_{n})$. We finish by verifying that $(x,m)$
is a $C^{1}([0,T];C(I))\oplus C^{1}([0,T];L^{\infty}(I))$ solution
to Equation (\ref{eq:COUPLED EQUATION}). Indeed the following equation
is satisfied for each $n$ 

\[
\left\{ \begin{array}{lc}
x_{n}(t,s)=x^{0}(s)+\int_{0}^{t}\int_{I}m_{n-1}(\tau,s_{\ast})\mathbf{a}(x_{n}(\tau,s_{\ast})-x_{n}(\tau,s))ds_{\ast}d\tau\\
m_{n}(t,s)=m^{0}(s)+\int_{0}^{t}\Psi(s,x_{n-1}(\tau,\cdot),m_{n}(\tau,\cdot))d\tau.
\end{array}\right.
\]
We explain how the passage to the limit as $n\rightarrow\infty$ in
the equation for $m_{n}$ is done, and the passage for the equation
of $x_{n}$ is a standard verfication left to the reader. By Claim
\ref{opinions remain seperated } we have for all $(\tau,s_{\ast},s)\in[0,T]\times I^{2}$
\[
\left|x_{n-1}(\tau,s_{\ast})-x_{n-1}(\tau,s)\right|^{2}\geq e^{-LT}\left|x^{0}(s_{\ast})-x^{0}(s)\right|^{2}
\]
so that 

\[
\left|x(\tau,s_{\ast})-x(\tau,s)\right|^{2}\geq e^{-LT}\left|x^{0}(s_{\ast})-x^{0}(s)\right|^{2}.
\]
Therefore, Lemma \ref{Estimate for psi} is applicable and entails
\begin{align*}
\int_{I}\left|\Psi(s,x_{n-1}(\tau,\cdot),m_{n}(\tau,\cdot))-\Psi(s,x(t,\cdot),m(t,\cdot))\right|ds\hspace{-4.0 cm}\\\leq&\int_{I}\left|\Psi(s,x_{n-1}(\tau,\cdot),m_{n}(\tau,\cdot))-\Psi(s,x(t,\cdot),m_{n}(t,\cdot))\right|ds\\
&+\int_{I}\left|\Psi(s,x(\tau,\cdot),m_{n}(\tau,\cdot))-\Psi(s,x(t,\cdot),m(t,\cdot))\right|ds\hspace{-2.0 cm}\\
\lesssim&\left\Vert m_{n}(\tau,\cdot)-m(\tau,\cdot)\right\Vert _{1}+\sup_{I}\left|x_{n-1}(\tau,\cdot)-x(\tau,\cdot)\right|\underset{n\rightarrow\infty}{\rightarrow}0.
\hspace{-4.0cm}
\end{align*}
Hence, it follows that the right hand side in the equation for $m_{n}$
convergence in $C([0,T];L^{1}(I))$ to 
\[
\int_{0}^{t}\Psi(s,x(\tau,\cdot),m(\tau,\cdot))d\tau,
\]
which by uniqueness of the limit implies that for all $t\in[0,T]$
and a.e. $s\in I$ we have 
\[
m(t,s)=m^{0}(s)+\int_{0}^{t}\Psi(s,x(\tau,\cdot),m(\tau,\cdot))d\tau.
\]
The upgrade $(x,m)\in C^{1}([0,T]\times I)\oplus C^{1}([0,T];L^{\infty}(I))$
is exactly by the same reasoning of Lemma \ref{decoupled eq}. 

\textbf{Step 2. Uniqueness}. Suppose we are given 2 solutions $(x_{1},m_{1})$
and $(x_{2},m_{2})$ with the same initial data.We have 
\begin{align*}
\left|x_{1}(t,s)-x_{2}(t,s)\right|=\left|\int_{0}^{t}\int_{I}m_{1}(\tau,s_{\ast})\mathbf{a}(x_{1}(\tau,s_{\ast})-x_{1}(\tau,s))ds_{\ast}d\tau\right.\\
\left.-\int_{0}^{t}\int_{I}m_{2}(\tau,s_{\ast})\mathbf{a}(x_{2}(\tau,s_{\ast})-x_{2}(\tau,s))ds_{\ast}d\tau\right|\\
\leq\left|\int_{0}^{t}\int_{I}m_{1}(\tau,s_{\ast})\mathbf{a}(x_{1}(\tau,s_{\ast})-x_{1}(\tau,s))ds_{\ast}d\tau\right.\\
\left.-\int_{0}^{t}\int_{I}m_{2}(\tau,s_{\ast})\mathbf{a}(x_{1}(\tau,s_{\ast})-x_{1}(\tau,s))ds_{\ast}d\tau\right|\\
+\left|\int_{0}^{t}\int_{I}m_{2}(\tau,s_{\ast})\mathbf{a}(x_{1}(\tau,s_{\ast})-x_{1}(\tau,s))ds_{\ast}d\tau\right.\\
\left.-\int_{0}^{t}\int_{I}m_{2}(\tau,s_{\ast})\mathbf{a}(x_{2}(\tau,s_{\ast})-x_{2}(\tau,s))ds_{\ast}d\tau\right|\\
\lesssim\int_{0}^{t}\left\Vert m_{1}(\tau,\cdot)-m_{2}(\tau,\cdot)\right\Vert _{1}+\underset{I}{\sup}\left|x_{1}(\tau,\cdot)-x_{2}(\tau,\cdot)\right|d\tau. \hspace{-2.0 cm}
\end{align*}
In addition, Lemma \ref{Estimate for psi} yields 
\begin{align*}
 \left\Vert m_{1}(t,\cdot)-m_{2}(t,\cdot)\right\Vert _{1}\leq&\,\int_{0}^{t}\int_{I}\left|\Psi(s,x_{1}(\tau,\cdot),m_{1}(\tau,\cdot))-\Psi(s,x_{2}(\tau,\cdot),m_{2}(\tau,\cdot))\right|dsd\tau
 \\\leq&\, \int_{0}^{t}\int_{I}\left|\Psi(s,x_{1}(t,\cdot),m_{1}(t,\cdot))-\Psi(s,x_{2}(t,\cdot),m_{1}(t,\cdot))\right|dsd\tau\\ &+\int_{0}^{t}\int_{I}\left|\Psi(s,x_{2}(t,\cdot),m_{1}(t,\cdot))-\Psi(s,x_{2}(t,\cdot),m_{2}(t,\cdot))\right|dsd\tau\\
 \lesssim&\,\int_{0}^{t}\left\Vert m_{1}(\tau,\cdot)-m_{2}(\tau,\cdot)\right\Vert _{1}+\sup_{I}\left|x_{1}(\tau,\cdot)-x_{2}(\tau,\cdot)\right|d\tau. \hspace{-2.0 cm}
\end{align*}
It follows that 
\[
\left\Vert m_{1}(t,\cdot)-m_{2}(t,\cdot)\right\Vert _{1}+\sup_{I}\left|x_{1}(t,\cdot)-x_{2}(t,\cdot)\right|\lesssim\int_{0}^{t}\left\Vert m_{1}(\tau,\cdot)-m_{2}(\tau,\cdot)\right\Vert _{1}+\sup_{I}\left|x_{1}(\tau,\cdot)-x_{2}(\tau,\cdot)\right|d\tau,
\]
from which we infer 
\[
\left\Vert m_{1}(t,\cdot)-m_{2}(t,\cdot)\right\Vert _{1}+\sup_{I}\left|x_{1}(t,\cdot)-x_{2}(t,\cdot)\right|=0,
\]
and this a fortiori forces $m_{1}=m_{2}$ and $x_{1}=x_{2}$. 
\begin{flushright}
$\square$
\par\end{flushright}

\section{\label{sec:The-Gronwall-Estimate}The Graph Limit and consequences.}

This section is devoted to obtaining a Gronwall estimate on the time
dependent quantity $\xi_{N}(t)+\zeta_{N}(t)$, where 

\[
\xi_{N}(t)\coloneqq\left\Vert \widetilde{x}_{N}(t,\cdot)-x(t,\cdot)\right\Vert _{L^{2}(I)}^{2},\ \zeta_{N}(t)\coloneqq\left\Vert \widetilde{m}_{N}(t,\cdot)-m(t,\cdot)\right\Vert _{L^{2}(I)}^{2},
\]
where $\widetilde{x}_{N},\widetilde{m}_{N}$ are given by Formula
(\ref{Riemman sums}) and $x,m$ are the corresponding solutions to
Equation (\ref{eq:COUPLED EQUATION}). We modify the argument demonstrated
in Theorem 1 in \cite{1} to our weakly singular settings. The estimate
for $\zeta_{N}(t)$ reflects the main novelty of this section. The
estimates we obtain are locally uniform in time. The symbol $\lesssim$
stands for inequality up to a constant which may depend only on $L,M,X,T,\mathbf{S},\mathbf{S}_{\infty}$.
The main theorem is 
\begin{thm}
\label{the graph limit: main thm} Let the hypotheses \textbf{H1}-\textbf{H3} hold.
Let $(x,m)\in C^{1}\left([0,T];C(I)\right)\oplus C^{1}\left([0,T];L^{\infty}(I)\right)$
be the solution to Equation (\ref{eq:COUPLED EQUATION}). Let $(\mathbf{x}_{N},\mathbf{m}_{N})\in C^{1}\left(\left[0,T\right];\mathbb{R}^{2N}\right)$
be the solution to the system (\ref{opinion dynamics sec2}). Then
\[
\left\Vert \widetilde{x}_{N}(t,\cdot)-x(t,\cdot)\right\Vert _{C([0,T];L^{2}(I))}+\left\Vert \widetilde{m}_{N}(t,\cdot)-m(t,\cdot)\right\Vert _{C([0,T];L^{2}(I))}\underset{N\rightarrow\infty}{\rightarrow}0.
\]
\end{thm}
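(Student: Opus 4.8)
## Proof Proposal

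The plan is to set up a Gronwall inequality for the combined quantity $\xi_N(t) + \zeta_N(t)$ directly, by differentiating in time. Since $\widetilde{x}_N$ and $\widetilde{m}_N$ solve the ``discrete'' system \eqref{eq:labled equation governing xN,mN} while $(x,m)$ solves the ``continuous'' system \eqref{eq:COUPLED EQUATION}, the differences satisfy equations whose right-hand sides can be compared. First I would differentiate $\xi_N(t) = \|\widetilde{x}_N(t,\cdot) - x(t,\cdot)\|_{L^2(I)}^2$ and insert the two respective evolution equations. The contribution involving $\mathbf{a}$ splits, as in Lemma \ref{decoupled eq}, into a term controlled by $\mathrm{Lip}(\mathbf{a}) = L$ times $\xi_N$ itself (using conservation of mass $\int_I \widetilde{m}_N = \int_I m = 1$ to absorb the weight factor), plus a term in which the weight difference $\widetilde{m}_N - m$ appears multiplied by a bounded factor $|\mathbf{a}(\cdot)| \lesssim L\overline{X}$; by Cauchy-Schwarz this is $\lesssim \xi_N + \zeta_N$. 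So $\frac{d}{dt}\xi_N \lesssim \xi_N + \zeta_N$.

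The delicate part is $\frac{d}{dt}\zeta_N$, because the weight equation involves the singular map $\mathbf{s}$. Here I would differentiate $\zeta_N$, insert $\partial_t \widetilde{m}_N = N\int_{\lfloor sN\rfloor/N}^{(\lfloor sN\rfloor+1)/N} \Psi(s_\ast, \widetilde{x}_N, \widetilde{m}_N)\,ds_\ast$ and $\partial_t m = \Psi(s, x, m)$, and add and subtract $\Psi(s, \widetilde{x}_N, \widetilde{m}_N)$. The telescoping gives three pieces: (i) a ``consistency'' or ``averaging'' error $N\int \Psi(s_\ast, \widetilde{x}_N, \widetilde{m}_N)\,ds_\ast - \Psi(s, \widetilde{x}_N, \widetilde{m}_N)$ measuring how far the piecewise-constant Riemann-sum functions are from being a fixed point of the continuous operator; (ii) $\Psi(s,\widetilde{x}_N,\widetilde{m}_N) - \Psi(s,\widetilde{x}_N,m)$, controlled by part 1 of Lemma \ref{Estimate for psi} by $12L\mathbf{S}_\infty \overline{X}\,\|\widetilde{m}_N - m\|_1 \lesssim \zeta_N^{1/2}$; and (iii) $\Psi(s,\widetilde{x}_N,m) - \Psi(s,x,m)$, which is exactly where the Lipschitz-off-the-origin hypothesis \textbf{H3} is needed — but part 2 of Lemma \ref{Estimate for psi} requires the spatial arguments to be \emph{increasing} in $s$. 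This is where I would invoke Claim \ref{opinions remain seperated } and its discrete analogue Lemma \ref{basic properties }.iv: both $x(t,\cdot)$ and $\widetilde{x}_N(t,\cdot)$ are (weakly) increasing, so Lemma \ref{Estimate for psi}.2 applies and bounds (iii) by $L(3\overline{M}\mathbf{S}_\infty + \mathbf{S}_\infty + 16\mathbf{S}\overline{X})\sup_I|\widetilde{x}_N - x|$. The obstacle here is the $\sup$ norm on the right-hand side versus the $L^2$ norm in $\xi_N$: because $\widetilde{x}_N - x$ is not piecewise constant we cannot bound $\sup$ by $L^2$ directly, but we can write $\widetilde{x}_N - x = (\widetilde{x}_N - \widetilde{x^0}_N) + (\widetilde{x^0}_N - x)$ where the first is Lipschitz-in-$s$ with constant controlled by Lemma \ref{basic properties }.iv together with the separation, or alternatively work throughout with the $L^1/L^2$ estimates from the well-posedness section and treat $\sup_I|\widetilde{x}_N - x|$ by first bounding $\sup_i|x_i^N - x(t, i/N)|$ and then the oscillation of $x$ over intervals of length $1/N$ — either way picking up a term that vanishes as $N\to\infty$.

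For the consistency error (i), I expect this to be the main technical obstacle, and the place where the argument genuinely differs from \cite{1}: one must show $\int_I |N\int_{\lfloor sN\rfloor/N}^{(\lfloor sN\rfloor+1)/N}\Psi(s_\ast,\widetilde{x}_N,\widetilde{m}_N)\,ds_\ast - \Psi(s,\widetilde{x}_N,\widetilde{m}_N)|\,ds \to 0$ as $N\to\infty$, uniformly on $[0,T]$. The key point is that $s\mapsto \Psi(s, \widetilde{x}_N, \widetilde{m}_N)$, while not continuous because of $\mathbf{s}$, has controlled modulus of continuity away from the diagonal: the only discontinuity in $s$ comes from $\mathbf{s}(\widetilde{x}_N(s) - \widetilde{x}_N(s_\ast))$ changing sign, which happens on a set of $s_\ast$ of measure $O(1/N)$ (those in the same cell as $s$), and on that bad set $|\mathbf{s}| \le \mathbf{S}_\infty$ gives an $O(1/N)$ contribution; off the bad set the separation estimate gives $|\widetilde{x}_N(s) - \widetilde{x}_N(s_\ast)|$ bounded below so $\mathbf{s}$ is Lipschitz there with the bound from \textbf{H3}.i, and the remaining oscillation of $\widetilde{x}_N$ over a cell is $O(1/N)$. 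Assembling these, the consistency error is $O(1/N) \to 0$. Finally, combining all pieces yields $\frac{d}{dt}(\xi_N + \zeta_N) \lesssim \xi_N + \zeta_N + \varepsilon_N$ with $\varepsilon_N\to 0$, and since $\xi_N(0) + \zeta_N(0) \to 0$ by Lebesgue differentiation applied to the initial data \eqref{initial data}, Gronwall gives $\sup_{[0,T]}(\xi_N(t) + \zeta_N(t)) \lesssim (\xi_N(0) + \zeta_N(0) + \varepsilon_N)e^{CT} \to 0$, which is the claimed convergence in $C([0,T]; L^2(I))$.
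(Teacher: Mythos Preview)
Your overall Gronwall strategy is correct and matches the paper's architecture, but there is a genuine gap in your handling of piece (iii), and your treatment of piece (i) is unnecessarily complicated.

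For (i): since $\widetilde{x}_N(t,\cdot)$ and $\widetilde{m}_N(t,\cdot)$ are constant on each cell $[\tfrac{i-1}{N},\tfrac{i}{N}]$, the map $s\mapsto\Psi(s,\widetilde{x}_N(t,\cdot),\widetilde{m}_N(t,\cdot))$ is itself constant on each cell, so its cell average equals its pointwise value and your consistency error (i) is identically zero. Your $O(1/N)$ discussion is not wrong but is not needed. The paper telescopes differently: it adds and subtracts $\Psi(s_\ast,x,m)$ (the \emph{limit} functions) inside the cell average, so that the resulting consistency term $g_N(t,s)=N\int_{\text{cell}}\Psi(s_\ast,x,m)\,ds_\ast-\Psi(s,x,m)$ is nontrivial but tends to zero by Lebesgue differentiation applied to a fixed integrand.

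For (iii): this is where your proposal actually breaks. Lemma~\ref{Estimate for psi}.2 gives an $L^1$-in-$s$ bound with $\sup_I|\widetilde{x}_N-x|$ on the right, whereas to close the Gronwall loop for $\xi_N+\zeta_N$ you need an $L^2$-in-$s$ bound with $\|\widetilde{x}_N-x\|_2$ on the right. Your suggested workarounds (decomposing $\widetilde{x}_N-x$, or controlling $\sup_i|x_i^N-x(t,i/N)|$ plus an oscillation term) do not furnish this: any uniform bound on $\sup_I|\widetilde{x}_N-x|$ that you could insert here is essentially the conclusion you are trying to prove, and there is no general inequality $\sup_I|\cdot|\lesssim\|\cdot\|_2$. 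The paper bypasses Lemma~\ref{Estimate for psi}.2 altogether and estimates $\int_I|\Psi(s,\widetilde{x}_N,m)-\Psi(s,x,m)|^2\,ds$ directly. The key device is to partition $I^2$ via the indicators
\[
\mathbf{1}_{A_N(t,s)}(s_\ast)=\mathbf{1}_{\{\widetilde{x}_N(t,s_\ast)>\widetilde{x}_N(t,s)\}},\qquad \mathbf{1}_{A(t,s)}(s_\ast)=\mathbf{1}_{\{x(t,s_\ast)>x(t,s)\}},
\]
and their complements. Where the indicators agree, both arguments of $\mathbf{s}$ share a sign, so \textbf{H3}.i gives a genuine Lipschitz estimate and the contribution is $\lesssim\|\widetilde{x}_N-x\|_2^2$. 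On the mismatch set one uses only $|\mathbf{s}|\le\mathbf{S}_\infty$, so its contribution is controlled by $\iint_{I^2}|\mathbf{1}_{A_N(t,s)}-\mathbf{1}_{A(t,s)}|\,ds_\ast\,ds$. The separation estimates (Lemma~\ref{basic properties }.iv and Claim~\ref{opinions remain seperated }) show that $A_N(t,s)=A_N(0,s)$ and $A(t,s)=A(0,s)$, reducing the question to $t=0$; then $\widetilde{x}_N(0,\cdot)\to x^0$ a.e., together with injectivity of $x^0$ and dominated convergence, forces this measure to be $o_N(1)$. The upshot is
\[
\int_I|\Psi(s,\widetilde{x}_N,m)-\Psi(s,x,m)|^2\,ds\;\lesssim\;\xi_N(t)+o_N(1),
\]
which is exactly the right shape to close Gronwall in $L^2$. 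This indicator-set argument is the missing idea in your proposal.
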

\textit{Proof.}\textbf{ Step 1. The time derivative of $\zeta_{N}(t)$.}
The estimate for the time derivative of $\zeta_{N}(t)$ reflects the
main difference with the argument in \cite{1}. The time derivative
of $\zeta_{N}$ is computed as follows. 

\begin{align*}
\dot{\zeta_{N}}(t)=\int_{I}\left(\widetilde{m}_{N}(t,s)-m(t,s)\right)\left(N\int_{\frac{1}{N}\left\lfloor sN\right\rfloor }^{\frac{1}{N}(\left\lfloor sN\right\rfloor +1)}\Psi(s_{\ast},\widetilde{x}_{N}(t,\cdot),\widetilde{m}_{N}(t,\cdot))ds_{\ast}-\Psi(s,x(t,\cdot),m(t,\cdot))\right)ds\\
=\int_{I}\left(\widetilde{m}_{N}(t,s)-m(t,s)\right)\underset{\coloneqq h_{N}(t,s)}{\underbrace{\left(N\int_{\frac{1}{N}\left\lfloor sN\right\rfloor }^{\frac{1}{N}(\left\lfloor sN\right\rfloor +1)}\Psi(s_{\ast},\widetilde{x}_{N}(t,\cdot),\widetilde{m}_{N}(t,\cdot))-\Psi(s_{\ast},x(t,\cdot),m(t,\cdot))ds_{\ast}\right)}}ds
\end{align*}
\begin{equation}
+\underset{I}{\int}\left(\widetilde{m}_{N}(t,s)-m(t,s)\right)\underset{\coloneqq g_{N}(t,s)}{\underbrace{\left(N\int_{\frac{1}{N}\left\lfloor sN\right\rfloor }^{\frac{1}{N}(\left\lfloor sN\right\rfloor +1)}\Psi(s_{\ast},x(t,\cdot),m(t,\cdot))ds_{\ast}-\Psi(s,x(t,\cdot),m(t,\cdot))\right)}}ds. 
\label{eq:estimate for =00005Czeta_=00007BN=00007D}
\end{equation}
By Lebesgue differentiation theorem, for each $t\in[0,T]$ it holds
that $g_{N}(t,s)\underset{N\rightarrow\infty}{\rightarrow}0$ pointwise
a.e. $s\in I$. In addition, $\left\Vert x\right\Vert _{C([0,T]\times I)},\left\Vert m\right\Vert _{C([0,T];L^{\infty}(I))}$ are
bounded which implies that $g_{N}(t,s)$ is uniformly bounded (with
respect to $N$) so that by the dominated convergence theorem we find
that the second integral in (\ref{eq:estimate for =00005Czeta_=00007BN=00007D})
is 

\begin{equation}
\leq\frac{1}{2}\zeta_{N}(t)+\frac{1}{2}\left\Vert g_{N}(t,\cdot)\right\Vert _{2}^{2}\label{eq:first estimate for zeta}
\end{equation}
where for each $t\in[0,T]$ it holds that 
\begin{equation}
\left\Vert g_{N}(t,\cdot)\right\Vert _{2}^{2}\underset{N\rightarrow\infty}{\rightarrow}0.\label{eq:VANISHING OF gN}
\end{equation} 
For the first integral, note 
\vspace{-3mm}
\begin{align*}
 \left\Vert h_{N}(t,\cdot)\right\Vert _{2}^{2}=&N^{2}\int_{I}\left|\int_{\frac{1}{N}\left\lfloor sN\right\rfloor }^{\frac{1}{N}(\left\lfloor sN\right\rfloor +1)}\Psi(s_{\ast},\widetilde{x}_{N}(t,\cdot),\widetilde{m}_{N}(t,\cdot))-\Psi(s_{\ast},x(t,\cdot),m(t,\cdot))ds_{\ast}\right|^{2}ds\\
 =&N^{2}\stackrel[i=1]{N}{\sum}\int_{\frac{i-1}{N}}^{\frac{i}{N}}\left|\int_{\frac{1}{N}\left\lfloor sN\right\rfloor }^{\frac{1}{N}(\left\lfloor sN\right\rfloor +1)}\Psi(s_{\ast},\widetilde{x}_{N}(t,\cdot),\widetilde{m}_{N}(t,\cdot))-\Psi(s_{\ast},x(t,\cdot),m(t,\cdot))ds_{\ast}\right|^{2}ds
\\ \leq& N\stackrel[i=1]{N}{\sum}\int_{\frac{i-1}{N}}^{\frac{i}{N}}\int_{\frac{1}{N}\left\lfloor sN\right\rfloor }^{\frac{1}{N}(\left\lfloor sN\right\rfloor +1)}\left|\Psi(s_{\ast},\widetilde{x}_{N}(t,\cdot),\widetilde{m}_{N}(t,\cdot))-\Psi(s_{\ast},x(t,\cdot),m(t,\cdot))\right|^{2}ds_{\ast}ds\\
=&N\stackrel[i=1]{N}{\sum}\int_{\frac{i-1}{N}}^{\frac{i}{N}}\int_{\frac{i-1}{N}}^{\frac{i}{N}}\left|\Psi(s_{\ast},\widetilde{x}_{N}(t,\cdot),\widetilde{m}_{N}(t,\cdot))-\Psi(s_{\ast},x(t,\cdot),m(t,\cdot))\right|^{2}ds_{\ast}ds\\
=&\stackrel[i=1]{N}{\sum}\int_{\frac{i-1}{N}}^{\frac{i}{N}}\left|\Psi(s_{\ast},\widetilde{x}_{N}(t,\cdot),\widetilde{m}_{N}(t,\cdot))-\Psi(s_{\ast},x(t,\cdot),m(t,\cdot))\right|^{2}ds_{\ast}\\
=&\int_{I}\left|\Psi(s_{\ast},\widetilde{x}_{N}(t,\cdot),\widetilde{m}_{N}(t,\cdot))-\Psi(s_{\ast},x(t,\cdot),m(t,\cdot))\right|^{2}ds_{\ast}\\ 
\leq&2\int_{I}\left|\Psi(s_{\ast},\widetilde{x}_{N}(t,\cdot),\widetilde{m}_{N}(t,\cdot))-\Psi(s_{\ast},\widetilde{x}_{N}(t,\cdot),m(t,\cdot))\right|^{2}ds_{\ast}\\
&+2\int_{I}\left|\Psi(s_{\ast},\widetilde{x}_{N}(t,\cdot),m(t,\cdot))-\Psi(s_{\ast},x(t,\cdot),m(t,\cdot))\right|^{2}ds_{\ast}\\
=&2\int_{I}\left|\Psi(s,\widetilde{x}_{N}(t,\cdot),\widetilde{m}_{N}(t,\cdot))-\Psi(s,\widetilde{x}_{N}(t,\cdot),m(t,\cdot))\right|^{2}ds\\
&+2\int_{I}\left|\Psi(s,\widetilde{x}_{N}(t,\cdot),m(t,\cdot))-\Psi(s,x(t,\cdot),m(t,\cdot))\right|^{2}ds.
\end{align*}
Note that at this stage we cannot quite appeal to the Estimate (\ref{Estimate for psi})
since it was formulated for $x$ which are one to one in the variable
$s$ . The main difference is in the estimate of the second integral,
which is now bounded by $\left\Vert \widetilde{x}_{N}(t,\cdot)-x(t,\cdot)\right\Vert _{2}^{2}$
up to an error term which decays to $0$ as $N\rightarrow\infty$.
Precisely put
\begin{lem}
    
It holds that 
\[
i.\int_{I}\left|\Psi(s,\widetilde{x}_{N}(t,\cdot),\widetilde{m}_{N}(t,\cdot))-\Psi(s,\widetilde{x}_{N}(t,\cdot),m(t,\cdot))\right|^{2}ds\lesssim\left\Vert \widetilde{m}_{N}(t,\cdot)-m(t,\cdot)\right\Vert _{2}^{2}.
\]

\[
ii.\int_{I}\left|\Psi(s,\widetilde{x}_{N}(t,\cdot),m(t,\cdot))-\Psi(s,x(t,\cdot),m(t,\cdot))\right|^{2}ds\lesssim\left\Vert \widetilde{x}_{N}(t,\cdot)-x(t,\cdot)\right\Vert _{2}^{2}+o_{N}(1).
\]
\end{lem}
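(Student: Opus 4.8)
Set $y:=\widetilde x_N(t,\cdot)$ and suppress $t$ when convenient. Before estimating we record the uniform bounds supplied by the earlier results: by Lemma~\ref{basic properties } and the choice of initial data \eqref{initial data}, $\widetilde m_N(t,\cdot)\ge0$, $\int_I\widetilde m_N(t,s)\,ds=1$ and $\|\widetilde m_N(t,\cdot)\|_\infty\le\overline M$ uniformly in $N,t$; the analogous facts for $m(t,\cdot)$ come from \eqref{Uniform bound for the weights}; and $\sup_{[0,T]\times I}|\widetilde x_N|,\sup_{[0,T]\times I}|x|\le\overline X$. We also use that (taking $x^0$ increasing, as in Claim~\ref{opinions remain seperated }) $x_i^{0,N}$ is strictly increasing in $i$, so that — the trajectories remaining ordered by Lemma~\ref{basic properties }(iv) — the step function $s\mapsto y(s)$ is non-decreasing and takes distinct values on distinct blocks $[\tfrac{i-1}N,\tfrac iN)$, while $s\mapsto x(t,s)$ is increasing by Claim~\ref{opinions remain seperated }. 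Throughout, $\|f\|_{L^1(I)}\le\|f\|_{L^2(I)}$ is used silently.

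\textbf{Estimate (i).} The plan is to repeat Step~1 of the proof of Lemma~\ref{Estimate for psi}, carrying $L^2$ norms instead of $L^1$ norms. Since the kernel $\big(\mathbf a(y(s_{\ast\ast})-y(s))+\mathbf a(y(s_{\ast\ast})-y(s_\ast))\big)\mathbf s(y(s)-y(s_\ast))$ is bounded in modulus by $4L\overline X\mathbf S_\infty$ (from $\mathbf a(0)=0$, $\mathrm{Lip}(\mathbf a)=L$, $|y|\le\overline X$, $|\mathbf s|\le\mathbf S_\infty$), the same splitting as there yields
\begin{align*}
\big|\Psi(s,y,\widetilde m_N)-\Psi(s,y,m)\big|
&\le 4L\overline X\mathbf S_\infty\,\widetilde m_N(s)\iint_{I^2}\big|\widetilde m_N(s_\ast)\widetilde m_N(s_{\ast\ast})-m(s_\ast)m(s_{\ast\ast})\big|\,ds_\ast\,ds_{\ast\ast}\\
&\quad+4L\overline X\mathbf S_\infty\,\big|\widetilde m_N(s)-m(s)\big|.
\end{align*}
Adding and subtracting $\widetilde m_N(s_\ast)m(s_{\ast\ast})$ and using the unit-mass normalisations, the double integral is $\le 2\|\widetilde m_N-m\|_1\le 2\|\widetilde m_N-m\|_2$. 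Taking $L^2(I)$ norms in $s$ and using $\|\widetilde m_N\|_2\le\overline M$ then gives $\|\Psi(\cdot,y,\widetilde m_N)-\Psi(\cdot,y,m)\|_2\lesssim\|\widetilde m_N-m\|_2$, and squaring gives (i).

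\textbf{Estimate (ii).} Using $u_1v_1-u_2v_2=(u_1-u_2)v_1+u_2(v_1-v_2)$ on the integrand defining $\Psi(s,y,m)-\Psi(s,x,m)$ splits it into an ``$\mathbf a$-difference'' term, bounded by $L\mathbf S_\infty\big(2|y(s_{\ast\ast})-x(s_{\ast\ast})|+|y(s)-x(s)|+|y(s_\ast)-x(s_\ast)|\big)$, and an ``$\mathbf s$-difference'' term, bounded by $4L\overline X\,\big|\mathbf s(y(s)-y(s_\ast))-\mathbf s(x(s)-x(s_\ast))\big|$. Weighting by $m(s)m(s_\ast)m(s_{\ast\ast})$, integrating in $(s_\ast,s_{\ast\ast})$ (using $\int_I m=1$, $\|m\|_\infty\le\overline M$) and taking the $L^2(I)$ norm in $s$, the $\mathbf a$-difference term contributes $\lesssim\|y-x\|_2$, which is harmless. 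The crux is the $\mathbf s$-difference term, which after the $s_{\ast\ast}$-integration reduces to estimating the $L^2(I)$ norm of $s\mapsto m(s)\int_I m(s_\ast)\,|\mathbf s(y(s)-y(s_\ast))-\mathbf s(x(s)-x(s_\ast))|\,ds_\ast$. Here the plan is: for $s\in[\tfrac{i-1}N,\tfrac iN)$, split the $s_\ast$-integral at that same block. On the diagonal block $y(s)=y(s_\ast)$, so $\mathbf s(y(s)-y(s_\ast))=0$ and this part of the inner integral is $\le\mathbf S_\infty\int_{[\frac{i-1}N,\frac iN)}m(s_\ast)\,ds_\ast\le\overline M\mathbf S_\infty/N$ uniformly in $s$; hence it contributes $O(1/N)=o_N(1)$ in $L^2(I)$. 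Off the block, $s$ and $s_\ast$ lie in distinct blocks, so — as $y$ and $x$ are non-decreasing and separate blocks — $y(s)-y(s_\ast)$ and $x(s)-x(s_\ast)$ are nonzero with the common sign of $s-s_\ast$; thus both arguments of $\mathbf s$ lie in $(0,\infty)$ or both in $(-\infty,0)$, and the one-sided Lipschitz part of \textbf{H3} gives $|\mathbf s(y(s)-y(s_\ast))-\mathbf s(x(s)-x(s_\ast))|\le\mathbf S\big(|y(s)-x(s)|+|y(s_\ast)-x(s_\ast)|\big)$, whose contribution is again $\lesssim\|y-x\|_2$. Collecting, $\|\Psi(\cdot,y,m)-\Psi(\cdot,x,m)\|_2\lesssim\|y-x\|_2+o_N(1)$, and squaring with $(a+b)^2\le 2a^2+2b^2$ gives (ii).

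\textbf{The main obstacle.} Everything except the $\mathbf s$-difference term in (ii) is a routine adaptation of Lemma~\ref{Estimate for psi}. That term is the genuine difficulty: $\mathbf s$ is not Lipschitz at the origin and, unlike in Lemma~\ref{Estimate for psi}, the step function $\widetilde x_N$ is only non-decreasing rather than strictly one-to-one, so that lemma cannot be invoked directly. The resolution above hinges on two observations — that the ``bad'' set on which the arguments of $\mathbf s$ sit at or on opposite sides of $0$ is contained in the union of diagonal blocks (total measure $1/N\to0$, and on which one value of $\mathbf s$ is exactly $0$), and that off this set the common monotonicity of $\widetilde x_N$ and $x$ restores sign agreement and hence the usable one-sided Lipschitz bound of \textbf{H3}. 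Verifying that the resulting pointwise bounds assemble into the claimed $L^2$ estimate, with the diagonal contribution genuinely $o_N(1)$, is the last thing to check.
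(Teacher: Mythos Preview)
Your proof is correct. Part (i) follows the paper exactly—both transplant Step~1 of Lemma~\ref{Estimate for psi} from $L^1$ to $L^2$. In part (ii) you and the paper both split into an $\mathbf a$-difference term (routine) and an $\mathbf s$-difference term (the substance), but handle the latter by different decompositions. The paper partitions $I^2$ according to the four sign combinations of $\widetilde x_N(t,s_\ast)-\widetilde x_N(t,s)$ and $x(t,s_\ast)-x(t,s)$; on the two sign-matching pieces it invokes the one-sided Lipschitz bound of \textbf{H3}, and on the two sign-mismatching pieces it reduces to $t=0$ via order preservation (Lemma~\ref{basic properties }(iv), Claim~\ref{opinions remain seperated }) and then uses the Lebesgue differentiation theorem together with dominated convergence to show that the mismatch region has measure $o_N(1)$. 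Your diagonal-block split is more direct: off the block containing $s$, the monotonicity of both $\widetilde x_N$ and $x$ forces sign agreement automatically, so \textbf{H3} applies there; on the block one argument of $\mathbf s$ is identically $0$, so only the crude bound $\mathbf S_\infty$ is needed and the contribution is an explicit $O(1/N)$. Your route thus avoids the soft dominated-convergence step, upgrades the error from a qualitative $o_N(1)$ to an explicit rate, and sidesteps having to justify the one-sided Lipschitz bound on the diagonal block where one argument vanishes and \textbf{H3} does not literally apply. Both arguments rest on the same monotonicity facts and are valid in the present setting.
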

\textit{Proof}. Unless unavoidable, we supress the time variable.

\textbf{i.} Thanks to Lemma \ref{basic properties } we have 
\[
\int_{I}\widetilde{m}_{N}(t,s)ds=1,\ \widetilde{m}_{N}(t,s)\geq0.
\]
The estimate i. is almost identical to the estimate demonstrated in
(\ref{Estimate for psi}), the only minor difference being that here
we take the $L^{2}$-norm in $s$. Setting $\mathbf{a}_{N}(s,s_{\ast},s_{\ast\ast})\coloneqq\mathbf{a}(\widetilde{x}_{N}(s_{\ast\ast})-\widetilde{x}_{N}(s))+\mathbf{a}(\widetilde{x}_{N}(s_{\ast\ast})-\widetilde{x}_{N}(s_{\ast}))$, we have 
\begin{align*}
 \left|\widetilde{m}_{N}(s)\iint_{I^{2}}\widetilde{m}_{N}(s_{\ast})\widetilde{m}_{N}(s_{\ast\ast})\mathbf{a}_{N}(s,s_{\ast},s_{\ast\ast})\mathbf{s}(\widetilde{x}_{N}(s)-\widetilde{x}_{N}(s_{\ast}))ds_{\ast}ds_{\ast\ast}\right.\hspace{4.0 cm}\\
 \left.-m(s)\iint_{I^{2}}m(s_{\ast})m(s_{\ast\ast})\mathbf{a}_{N}(s,s_{\ast},s_{\ast\ast})\mathbf{s}(\widetilde{x}_{N}(s)-\widetilde{x}_{N}(s_{\ast}))ds_{\ast}ds_{\ast\ast}\right|\\
 \leq\widetilde{m}_{N}(s)\left|\iint_{I^{2}}\left(\widetilde{m}_{N}(s_{\ast})\widetilde{m}_{N}(s_{\ast\ast})-m(s_{\ast})m(s_{\ast\ast})\right)\mathbf{a}(\widetilde{x}_{N}(s_{\ast\ast})-\widetilde{x}_{N}(s))\mathbf{s}(\widetilde{x}_{N}(s)-\widetilde{x}_{N}(s_{\ast}))ds_{\ast}ds_{\ast\ast}\right. \hspace{-0.5 cm}\\\left.+\iint_{I^{2}}\left(\widetilde{m}_{N}(s_{\ast})\widetilde{m}_{N}(s_{\ast\ast})-m(s_{\ast})m(s_{\ast\ast})\right)\mathbf{a}(\widetilde{x}_{N}(s_{\ast\ast})-\widetilde{x}_{N}(s_{\ast}))\mathbf{s}(\widetilde{x}_{N}(s)-\widetilde{x}_{N}(s_{\ast}))ds_{\ast}ds_{\ast\ast}\right|\hspace{-0.5 cm}\\
 +\left|\widetilde{m}_{N}(s)-m(s)\right|\left|\iint_{I^{2}}m(s_{\ast})m(s_{\ast\ast})\mathbf{a}(\widetilde{x}_{N}(s_{\ast\ast})-\widetilde{x}_{N}(s))\mathbf{s}(\widetilde{x}_{N}(s)-\widetilde{x}_{N}(s_{\ast}))ds_{\ast}ds_{\ast\ast}\right.\\\left.+\iint_{I^{2}}m(s_{\ast})m(s_{\ast\ast})\mathbf{a}(\widetilde{x}_{N}(s_{\ast\ast})-\widetilde{x}_{N}(s_{\ast}))\mathbf{s}(\widetilde{x}_{N}(s)-\widetilde{x}_{N}(s_{\ast}))ds_{\ast}ds_{\ast\ast}\right|\\
 \lesssim\iint_{I^{2}}\left|\widetilde{m}_{N}(s_{\ast})\widetilde{m}_{N}(s_{\ast\ast})-m(s_{\ast})m(s_{\ast\ast})\right|ds_{\ast}ds_{\ast\ast}
 \hspace{3.0 cm}
 \end{align*}
\begin{equation}
+\left|\widetilde{m}_{N}(s)-m(s)\right|\iint_{I^{2}}m(s_{\ast})m(s_{\ast\ast})ds_{\ast}ds_{\ast\ast}. \hspace{-3.0 cm}\label{eq:-1-1}
\end{equation}
The first integral is 
\begin{align*}
\lesssim&\iint_{I^{2}}\left|\widetilde{m}_{N}(s_{\ast\ast})\left(\widetilde{m}_{N}(s_{\ast})-m(s_{\ast})\right)\right|ds_{\ast}ds_{\ast\ast}+\iint_{I^{2}}\left|m(s_{\ast})\left(\widetilde{m}_{N}(s_{\ast\ast})-m(s_{\ast\ast})\right)\right|ds_{\ast}ds_{\ast\ast}\\
\lesssim&\int_{I}\left|\widetilde{m}_{N}(s_{\ast\ast})-m(s_{\ast\ast})\right|ds_{\ast\ast}+\int_{I}\left|\widetilde{m}_{N}(s_{\ast})-m(s_{\ast})\right|ds_{\ast}.
\end{align*}
Therefore squaring and integrating in $s$ over $I$, Inequality (\ref{eq:-1-1})
produces 
\[
\int_{I}\left|\Psi(s,\widetilde{x}_{N}(t,\cdot),\widetilde{m}_{N}(t,\cdot))-\Psi(s,\widetilde{x}_{N}(t,\cdot),m(t,\cdot))\right|^{2}ds\lesssim\left\Vert \widetilde{m}_{N}(t,\cdot)-m(t,\cdot)\right\Vert _{2}^{2}.
\]

\textbf{ii. }We have 

\begin{align*}
&\left|\iint_{I^{2}}m(s_{\ast})m(s_{\ast\ast})\mathbf{a}_{N}(s,s_{\ast},s_{\ast\ast})\mathbf{s}(\widetilde{x}_{N}(s)-\widetilde{x}_{N}(s_{\ast}))ds_{\ast}ds_{\ast\ast}\right.\\    
&\left.-\iint_{I^{2}}m(s_{\ast})m(s_{\ast\ast})\mathbf{a}(s,s_{\ast},s_{\ast\ast})\mathbf{s}(x(s)-x(s_{\ast}))ds_{\ast}ds_{\ast\ast}\right|\\
\leq& L\iint_{I^{2}}m(s_{\ast})m(s_{\ast\ast})\left(2\left|\widetilde{x}_{N}(s_{\ast\ast})-x(s_{\ast\ast})\right|+\left|\widetilde{x}_{N}(s_{\ast})-x(s_{\ast})\right|+\left|\widetilde{x}_{N}(s)-x(s)\right|\right)ds_{\ast}ds_{\ast\ast}\\
&+\iint_{I^{2}}m(s_{\ast})m(s_{\ast\ast})\left|\mathbf{a}(x(s_{\ast\ast})-x(s))+\mathbf{a}(x(s_{\ast\ast})-x(s_{\ast}))\right|\\
&\times\left|\mathbf{s}(\widetilde{x}_{N}(s)-\widetilde{x}_{N}(s_{\ast}))-\mathbf{s}(x(s)-x(s_{\ast}))\right|ds_{\ast}ds_{\ast\ast}\\&\coloneqq J_{1}(t,s)+J_{2}(t,s).
\end{align*}
We estimate separately $\int_{I}m^{2}(t,s)\left|J_{1}(t,s)\right|^{2}ds$ and
$\int_{I}m^{2}(t,s)\left|J_{2}(t,s)\right|^{2}ds$. It is straightforward to
check
\begin{equation}
\int_{I}m^{2}(t,s)\left|J_{1}(t,s)\right|^{2}ds\lesssim\left\Vert \widetilde{x}_{N}(t,\cdot)-x(t,\cdot)\right\Vert _{2}^{2}.\label{eq:J1 es}
\end{equation}
Let us concentrate on the estimate of $\int_{I}m^{2}(t,s)\left|J_{2}(t,s)\right|^{2}ds$.
For each $s\in I$ set
\[
A_{N}(t,s)\coloneqq\left\{ s_{\ast}\in I\left|\widetilde{x}_{N}(t,s_{\ast})-\widetilde{x}_{N}(t,s)>0\right.\right\}, 
\ B_{N}(t,s)\coloneqq A_{N}^{c}(t,s)\]
and 
\[
A(t,s)\coloneqq\left\{ s_{\ast}\in I\left|x(t,s_{\ast})-x(t,s)>0\right.\right\}, \ B(t,s)\coloneqq A^{c}(t,s) .
\]
We abbreviate 
\[\mathbf{s}_{N}(s,s_{\ast})\coloneqq\mathbf{s}(\widetilde{x}_{N}(t,s)-\widetilde{x}_{N}(t,s_{\ast}))-\mathbf{s}(x(t,s)-x(t,s_{\ast})).\]
We estimate the integral as follows 
\begin{align*}
 \int_{I}m^{2}(t,s)\left|J_{2}(t,s)\right|^{2}ds\lesssim&\iint_{I^{2}}\left|\mathbf{s}_{N}(s,s_{\ast})\right|^{2}ds_{\ast}ds\\
=&\iint_{I^{2}}\mathbf{1}_{A_{N}(t,s)}(s_{\ast})\mathbf{1}_{A(t,s)}(s_{\ast})\left|\mathbf{s}_{N}(s,s_{\ast})\right|^{2}ds_{\ast}ds\\
&+\iint_{I^{2}}\mathbf{1}_{B_{N}(t,s)}(s_{\ast})\mathbf{1}_{B(t,s)}(s_{\ast})\left|\mathbf{s}_{N}(s,s_{\ast})\right|^{2}ds_{\ast}ds\\
&+\iint_{I^{2}}\mathbf{1}_{A_{N}(t,s)}(s_{\ast})\left(\mathbf{1}_{B(t,s)}-\mathbf{1}_{B_{N}(t,s)}\right)(s_{\ast})\left|\mathbf{s}_{N}(s,s_{\ast})\right|^{2}ds_{\ast}ds\\
&+\iint_{I^{2}}\left(\mathbf{1}_{A(t,s)}-\mathbf{1}_{A_{N}(t,s)}\right)(s_{\ast})\mathbf{1}_{B_{N}(t,s)}(s_{\ast})\left|\mathbf{s}_{N}(s,s_{\ast})\right|^{2}ds_{\ast}ds\\\eqqcolon&\mathbf{I}+\mathbf{II}+\mathbf{III}+\mathbf{IV}.
\end{align*}
By the assumption \textbf{H3} we have
\begin{equation}
\mathbf{I}+\mathbf{II}\lesssim\left\Vert \widetilde{x}_{N}(t,\cdot)-x(t,\cdot)\right\Vert _{2}^{2}=\xi_{N}(t).\label{I+II}    
\end{equation}
Recall that by Lemma \ref{basic properties } and Claim \ref{opinions remain seperated } there exist a constant
$C>1$ such that 

\begin{equation}
\frac{1}{C}\left|\widetilde{x}_{N}(0,s_{\ast})-\widetilde{x}_{N}(0,s)\right|\leq\left|\widetilde{x}_{N}(t,s_{\ast})-\widetilde{x}_{N}(t,s)\right|\leq C\left|\widetilde{x}_{N}(0,s_{\ast})-\widetilde{x}_{N}(0,s)\right|,\label{eq:-6}
\end{equation}

\[
\frac{1}{C}\left|x^{0}(s_{\ast})-x^{0}(s)\right|\leq\left|x(t,s_{\ast})-x(t,s)\right|\leq C\left|x^{0}(s_{\ast})-x^{0}(s)\right|.
\]
Hence $\mathbf{1}_{A_{N}(t,s)}=\mathbf{1}_{A_{N}(0,s)},\mathbf{1}_{A(t,s)}=\mathbf{1}_{A(0,s)}$,
so that 

\[
\mathbf{IV}\lesssim\iint_{I^{2}}\left|\mathbf{1}_{A_{N}(t,s)}(s_{\ast})-\mathbf{1}_{A(t,s)}(s_{\ast})\right|ds_{\ast}ds=\iint_{I^{2}}\left|\mathbf{1}_{A_{N}(0,s)}(s_{\ast})-\mathbf{1}_{A(0,s)}(s_{\ast})\right|ds_{\ast}ds.
\]
By Lebesgue differentiation theorem, for a.e. $s\in I$ it holds that 
\[\widetilde{x_{N}}(0,s_{\ast})-\widetilde{x_{N}}(0,s)\underset{N\rightarrow\infty}{\rightarrow}x^{0}(s_{\ast})-x^{0}(s)\]
a.e. $s_{\ast}$. 
For all $s\in I$ the set $\left\{ s_{\ast}\left|x^{0}(s_{\ast})=x^{0}(s)\right.\right\} $(being
an atom) is null due to \textbf{H2}, and therefore for a.e. $s$ it holds that 
\[
\mathbf{\mathbf{1}}_{A_{N}(0,s)}(s_{\ast})\underset{N\rightarrow\infty}{\rightarrow}\mathbf{\mathbf{1}}_{A(0,s)}(s_{\ast})
\] 
 a.e. $ s_{\ast}$. 
By dominated convergence we obtain 

\[
\iint_{I^{2}}\left|\mathbf{1}_{A_{N}(0,s)}-\mathbf{1}_{A(0,s)}\right|ds_{\ast}ds\underset{N\rightarrow\infty}{\rightarrow}0,
\]
which shows 
\begin{equation}
\mathbf{IV}\lesssim\iint_{I^{2}}\left|\mathbf{1}_{A_{N}(t,s)}-\mathbf{1}_{A(t,s)}\right|ds_{\ast}ds=o_{N}(1).\label{eq:oN(1)}
\end{equation}
The same reasoning also shows that 
\begin{equation}
\mathbf{III}\lesssim\iint_{I^{2}}\left|\mathbf{1}_{B_{N}(t,s)}-\mathbf{1}_{B(t,s)}\right|ds_{\ast}ds=o_{N}(1).\label{eq:oN(1)B}
\end{equation}
The combination of (\ref{eq:J1 es}), (\ref{I+II}), (\ref{eq:oN(1)}) and (\ref{eq:oN(1)B})
implies the announced claim. 
\begin{flushright}
$\square$
\par\end{flushright}
Gathering i.,ii. and (\ref{eq:first estimate for zeta}) gives
\begin{equation}
\dot{\zeta_{N}}(t)\lesssim\xi_{N}(t)+\zeta_{N}(t)+\left\Vert g_{N}(t,\cdot)\right\Vert _{2}^{2}+o_{N}(1).\label{eq:}
\end{equation}

\textbf{Step 2. The time derivative of $\xi_{N}(t)$.} The time derivative
of $\dot{\xi_{N}}(t)$ is mastered exactly as in {[}1{]}. Following
the argument in {[}1{]} one finds that 
\begin{equation}
\dot{\xi}_{N}(t)\lesssim\xi_{N}(t)+\zeta_{N}(t).\label{eq:-3}
\end{equation}

\textbf{Step 3. Conclusion. }The combination of Inequalities (\ref{eq:})
and (\ref{eq:-3}) yields 

\begin{align*}
  \dot{\xi}_{N}(t)+\dot{\zeta}_{N}(t)\lesssim\xi_{N}(t)+\zeta_{N}(t)+\frac{1}{2}\left\Vert g_{N}(t,\cdot)\right\Vert _{2}^{2}+o_{N}(1),  
\end{align*}
i.e. 
\begin{align*}
\xi_{N}(t)+\zeta_{N}(t)\lesssim\xi_{N}(0)+\zeta_{N}(0)+\int_{0}^{t}(\xi_{N}(\tau)+\zeta_{N}(\tau))d\tau+\int_{0}^{T}\left\Vert g_{N}(\tau,\cdot)\right\Vert _{2}^{2}d\tau+o_{N}(1)T.    
\end{align*}
Applying Gronwall's lemma entails 

\[
\xi_{N}(t)+\zeta_{N}(t)\leq C\left(\xi_{N}(0)+\zeta_{N}(0)+\int_{0}^{T}\left\Vert g_{N}(\tau,\cdot)\right\Vert _{2}^{2}d\tau+o_{N}(1)T\right)\exp\left(Ct\right).
\]
Since $\left\Vert g_{N}(\tau,\cdot)\right\Vert _{2}^{2}$ is uniformly
bounded, by (\ref{eq:VANISHING OF gN}) and dominated convergence
$\int_{0}^{T}\left\Vert g_{N}(\tau,\cdot)\right\Vert _{2}^{2}d\tau\underset{N\rightarrow\infty}{\rightarrow}0$,
which concludes the proof. 
\begin{flushright}
$\square$
\par\end{flushright}
In the last part of this section, we recall how to obtain as a consequence a special version of the mean
field limit for the empirical measure associated with the System (\ref{opinion dynamics sec2}).
We start by pointing out that currently the existing literature does
not cover the well posedness theory of the mean field equation, namely
the non-local non-homogeneous transport equation 
\[
\partial_{t}\mu(t,x)+\partial_{x}\left(\mu(t,x)\mathbf{a}\star\mu(t,x)\right)=h\left[\mu\right](t,x),\ \mu(0,\cdot)=\mu^{0},
\]
where 
\[
h\left[\mu\right](t,x)\coloneqq d\mu(t,x)\int_{\mathbb{R}^{2}}S(x,y,z)d\mu(t,y)d\mu(t,z),\ S(x,y,z)=\frac{1}{2}\left(\mathbf{a}(z-y)+\mathbf{a}(z-x)\right)\mathbf{s}(x-y).
\]
Indeed, in both works \cite{5,15} the well posedness
theory for measure valued solutions made an extensive use of the fact
that the source term satisfies some kind of Lipschitz continuity with
respect to a generalized Wasserstein distance $\rho$ defined in \cite{5,15}, namely a
bound of the type $\rho(h\left[\mu\right],h\left[\nu\right])\leq C\rho(\mu,\nu)$
with $C>0$. In fact, the graph limit method already allows to get a clean characterization of the
mean field limit for the special choice of initial data (\ref{initial data}).
With Theorem \ref{the graph limit: main thm} the proof of this mean
field limit can be deduced exactly by the same argument as in \cite{1}.
The following theorem is the first step in
understanding the mean field limit of the pairwise competition model,
which was a question raised in \cite{10}. We denote by $W_{1}$ the
Wasserstein distance of exponent $1$ (see e.g. \cite[Definition
6.1]{16} for general background on Wasserstein distances). 

\begin{thm}
Let the hypothesis \textbf{H1}-\textbf{H3} hold. Let $\mu_{N}(t)\coloneqq\frac{1}{N}\stackrel[j=1]{N}{\sum}m_{j}(t)\delta(x-x_{j}(t))$
and $\mu(t)\coloneqq\int_{I}m(t,s)\delta(x-x(t,s))ds$\footnote{We view $\int_{I}m(t,s)\delta(x-x(t,s))ds$ as a measure valued integral.
By definition $\left(\int_{I}m(t,s)\delta(x-x(t,s))ds\right)\varphi\coloneqq\int_{I}m(t,s)\varphi(x(t,s))ds$
for all $\varphi\in C_{b}(\mathbb{R}^{d})$.}. Then $W_{1}(\mu_{N}(t),\mu(t))\underset{N\rightarrow\infty}{\rightarrow}0$
for all $t\in[0,T]$. \label{weak mean field}
\end{thm}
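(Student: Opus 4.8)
The plan is to deduce this directly from Theorem \ref{the graph limit: main thm} via the Kantorovich--Rubinstein dual formulation of $W_1$; no new estimate is needed. The first step is the bookkeeping identity
\[
\mu_N(t)=\int_I\widetilde{m}_N(t,s)\,\delta\!\left(x-\widetilde{x}_N(t,s)\right)ds,
\]
which holds because $\widetilde{x}_N(t,\cdot)$ and $\widetilde{m}_N(t,\cdot)$ are constant, equal to $x_i(t)$ and $m_i(t)$ respectively, on each interval $[\tfrac{i-1}{N},\tfrac{i}{N})$; thus $\mu_N(t)$ and $\mu(t)$ are both obtained by pushing a weight density on $I$ forward along a map $I\to\mathbb{R}$. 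Both are probability measures: $\int_I\widetilde{m}_N(t,s)\,ds=\tfrac1N\sum_i m_i(t)=1$ by Lemma \ref{basic properties }(i), and $\int_I m(t,s)\,ds=1$ for the solution of \eqref{eq:COUPLED EQUATION} by conservation of total mass, inherited from Lemma \ref{decoupled eq} applied at each step of the iteration defining the coupled solution. Hence $W_1(\mu_N(t),\mu(t))$ is meaningful.

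Next I would use the uniform support bound. By Lemma \ref{basic properties }(ii) one has $|\widetilde{x}_N(t,s)|\le\overline{X}$ for all $N,t,s$, and the same a priori bound $|x(t,s)|\le\overline{X}\coloneqq e^{2LT}\sup_I|x^0|$ holds for the limiting solution, being exactly the bound established for the Picard iterates $x_n$ in the proof of well-posedness of \eqref{eq:COUPLED EQUATION}, which persists in the limit. Now fix $\varphi\in\mathrm{Lip}(\mathbb{R})$ with $\mathrm{Lip}(\varphi)\le1$; since both measures are probabilities we may normalise $\varphi(0)=0$, so $|\varphi(y)|\le|y|\le\overline{X}$ on the range that matters. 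Writing
\[
\int\varphi\,d\mu_N(t)-\int\varphi\,d\mu(t)=\int_I\bigl(\widetilde{m}_N(t,s)-m(t,s)\bigr)\varphi(\widetilde{x}_N(t,s))\,ds+\int_I m(t,s)\bigl(\varphi(\widetilde{x}_N(t,s))-\varphi(x(t,s))\bigr)\,ds,
\]
the first term is at most $\overline{X}\,\|\widetilde{m}_N(t,\cdot)-m(t,\cdot)\|_{L^1(I)}$, and the second, by $\mathrm{Lip}(\varphi)\le1$ together with the uniform-in-time weight bound $\|m(t,\cdot)\|_{L^\infty(I)}\le\overline{M}$, is at most $\overline{M}\,\|\widetilde{x}_N(t,\cdot)-x(t,\cdot)\|_{L^1(I)}$. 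Taking the supremum over $\varphi$ and using $\|\cdot\|_{L^1(I)}\le\|\cdot\|_{L^2(I)}$ (as $|I|=1$) yields
\[
W_1(\mu_N(t),\mu(t))\le\overline{X}\,\|\widetilde{m}_N(t,\cdot)-m(t,\cdot)\|_{L^2(I)}+\overline{M}\,\|\widetilde{x}_N(t,\cdot)-x(t,\cdot)\|_{L^2(I)},
\]
and the right-hand side tends to $0$ as $N\to\infty$ by Theorem \ref{the graph limit: main thm}.

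There is no genuinely hard step: the entire content sits inside Theorem \ref{the graph limit: main thm}, and the rest is the reduction above. The only points needing a little care are verifying the pushforward identity for $\mu_N(t)$, checking that both measures have unit mass so that the normalisation $\varphi(0)=0$ is harmless, and invoking the $N$-uniform support bound so that the possibly unbounded test function $\varphi$ is controlled. An equivalent route avoiding duality is to interpolate through $\nu_N(t)\coloneqq\int_I m(t,s)\,\delta(x-\widetilde{x}_N(t,s))\,ds$: the transport plan $\int_I m(t,s)\,\delta_{(\widetilde{x}_N(t,s),\,x(t,s))}\,ds$ bounds $W_1(\nu_N(t),\mu(t))$ by $\overline{M}\,\|\widetilde{x}_N(t,\cdot)-x(t,\cdot)\|_{L^1(I)}$, while a weight-difference estimate bounds $W_1(\mu_N(t),\nu_N(t))$ by $\overline{X}\,\|\widetilde{m}_N(t,\cdot)-m(t,\cdot)\|_{L^1(I)}$, giving the same conclusion.
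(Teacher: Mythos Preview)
Your proof is correct and follows essentially the same route as the paper: both identify $\mu_N(t)$ with $\int_I\widetilde{m}_N\,\delta_{\widetilde{x}_N}\,ds$ and then decompose $\int\varphi\,d(\mu_N-\mu)$ into a weight-difference term and a position-difference term, each controlled by Theorem \ref{the graph limit: main thm}. The only difference is cosmetic: you bound $W_1$ directly via Kantorovich--Rubinstein duality (obtaining the quantitative estimate $W_1\le\overline{X}\,\zeta_N^{1/2}+\overline{M}\,\xi_N^{1/2}$), whereas the paper first deduces weak convergence from the same decomposition and then invokes compact support and \cite[Theorem 6.9]{16} to upgrade to $W_1$-convergence.
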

\textit{Proof}. Fix some $\varphi\in C^{0,1}(\mathbb{R}^{d})$. Let
\[
\overline{\mu_{N}}(t)\coloneqq\int_{I}\widetilde{m_{N}}(t,s)\delta(x-\widetilde{x_{N}}(t,s))ds.
\]
We split the integral $\int_{\mathbb{R}}\varphi(x)\left(\mu_{N}(t,dx)-\mu(t,dx)\right)$
as follows. 

\begin{equation}
\int_{\mathbb{R}}\varphi(x)\left(\mu_{N}(t,dx)-\mu(t,dx)\right)=\int_{\mathbb{R}}\varphi(x)\left(\mu_{N}(t,dx)-\overline{\mu_{N}}(t,dx)\right)+\int_{\mathbb{R}}\varphi(x)\left(\overline{\mu_{N}}(t,dx)-\mu(t,dx)\right).\label{eq:-4}
\end{equation}
The first integral in the right hand side of Equation (\ref{eq:-4})
vanishes identically. Indeed 
\begin{align*}
 \int_{\mathbb{R}}\varphi(x)\left(\mu_{N}(t,dx)-\overline{\mu_{N}}(t,dx)\right)=&\frac{1}{N}\stackrel[j=1]{N}{\sum}m_{j}(t)\varphi(x_{j}(t))\\&-\stackrel[i=1]{N}{\sum}\int_{I}\varphi\left(\stackrel[j=1]{N}{\sum}\mathbf{1}_{\left[\frac{j-1}{N},\frac{j}{N}\right]}(s)x_{j}(t)\right)m_{i}(t)\mathbf{1}_{\left[\frac{i-1}{N},\frac{i}{N}\right]}(s)ds,   
\end{align*}
and the second term is
\begin{align*}
=&\stackrel[i=1]{N}{\sum}\int_{I}\varphi\left(\stackrel[j=1]{N}{\sum}\mathbf{1}_{\left[\frac{j-1}{N},\frac{j}{N}\right]}(s)x_{j}(t)\right)m_{i}(t)\mathbf{1}_{\left[\frac{i-1}{N},\frac{i}{N}\right]}(s)ds\\
=&\stackrel[i=1]{N}{\sum}\int_{\left[\frac{i-1}{N},\frac{i}{N}\right]}\varphi\left(\stackrel[j=1]{N}{\sum}\mathbf{1}_{\left[\frac{j-1}{N},\frac{j}{N}\right]}(s)x_{j}(t)\right)m_{i}(t)ds\\
=&\stackrel[i=1]{N}{\sum}\int_{\left[\frac{i-1}{N},\frac{i}{N}\right]}\varphi\left(x_{i}(t)\right)m_{i}(t)ds\\=&\frac{1}{N}\stackrel[i=1]{N}{\sum}m_{i}(t)\varphi\left(x_{i}(t)\right).
\end{align*}
As for the second integral, it is
\begin{align*}
 =&\int_{\mathbb{R}}\varphi(x)\left(\int_{I}\widetilde{m_{N}}(t,s)\delta(x-\widetilde{x_{N}}(t,s))ds-\int_{I}m(t,s)\delta(x-x(t,s))ds\right)\\
 =&\int_{I}\widetilde{m_{N}}(t,s)\varphi(\widetilde{x_{N}}(t,s))ds-\int_{I}m(t,s)\varphi(x(t,s))ds\\\leq&\left|\int_{I}\left(\widetilde{m_{N}}(t,s)-m(t,s)\right)\varphi(\widetilde{x_{N}}(t,s))ds\right|\\&+\left|\int_{I}m(t,s)\left(\varphi(\widetilde{x_{N}}(t,s))-\varphi(x(t,s))\right)ds\right|
 \\\leq&\left\Vert \varphi\right\Vert _{\infty}\left\Vert \widetilde{m}_{N}(t,\cdot)-m(t,\cdot)\right\Vert _{L^{2}(I)}\\&+\overline{M}\left\Vert \varphi\right\Vert _{\mathrm{Lip}}\left\Vert \widetilde{x}_{N}(t,\cdot)-x(t,\cdot)\right\Vert _{L^{2}(I)}.
 \end{align*}
The last estimate together with Theorem \ref{the graph limit: main thm}
entails the weak convergnce $\mu_{N}(t)\rightharpoonup\mu(t)$. Since
$\mu_{N}(t)$ and $\mu(t)$ are compactly supported, it follows that
$W_{1}(\mu_{N}(t),\mu(t))\underset{N\rightarrow\infty}{\rightarrow}0$
(see e.g. \cite[Theorem 6.9]{16}). 
\begin{flushright}
$\square$
\par\end{flushright}
\section{The case $d>1$}\
\label{Section 5}
In this section we explain how to extend the graph limit for arbitrary higher dimensions $d>1$. In some places the proof requires only minor modifications and we therefore concentrate only on the parts which require special treatment.

\subsection{The graph limit equation $d>1$}
The first notable difference in comparison to the case $d=1$ (or the work \cite{1}) is reflected in the definition of the Riemann sums. Instead of labeling the opinions along a multi-index of length $d$ we label them along a multi-$d$-dimensional matrix of indices. This is a particular case of the metric valued labelling procedure introduced in \cite{12} when the labelling space is $[0,1]^d$. At the level of the graph limit equation this choice corresponds to considering the equation posed on $[0,T]\times I^{d}$ rather than $[0,T]\times I$. Indeed, the fact that $x(t,\cdot)$ is a map from $I^{d}$ to itself enables to consider  bi-Lipschitz initial data, which is crucial for the sake of properly analyzing the singularity in $\mathbf{s}$ as is clarified in Lemma \ref{Estimate for psi d>1}. This labelling procedure does not have any modelling interpretation since particles (opinions) are still exchangeable or indistinguishable, it is solely needed for pure technical reasons. As we mentioned at the beginning of the paper, it would still be possible to go back to using $[0, 1]$ as a labeling space through a change of variable, since $[0, 1]$ and $[0,\ 1]^d$ are isomorphic as measurable spaces per the Borel isomorphism theorem. But, obviously, this would lead to painful technical assumptions to replace the bi-Lipschitz condition on $x(t,.)$, while the analysis is otherwise much more transparent when considering $[0,\ 1]^d$.   Precisely put, we take the number of opinions to be perfect powers of $d$ in which case the opinion dynamics system becomes 
the following system of $(d+1)N^d$ ODEs 
\begin{equation}
\left\{ \begin{array}{lc}
\dot{x_{i}}^{N}(t)=\frac{1}{N^d}\stackrel[j=1]{N^d}{\sum}m_{j}^{N}(t)\mathbf{a}(x_{j}^N(t)-x_{i}^N(t)),&\ x_{i}^{N}(0)=x_{i}^{0,N}\\
\dot{m}_{i}^{N}(t)=\psi_{i}^{N}(\mathbf{x}_{N}(t),\mathbf{m}_{N}(t)),&\ m_{i}^{N}(0)=m_{i}^{0,N}.
\end{array}\right.\label{eq:OPINION DYNAMICS d>1}
\end{equation}
Let $\mathcal{\mathcal{M}}=\left\{ 1,\ldots,N\right\}^d$ be the set of sequences of length $d$
with elements from $\left\{ 1,\ldots,N\right\}$ and fix a bijection
$\sigma:\mathcal{\mathcal{M}}\rightarrow\left\{ 1,\ldots,N^{d}\right\} $.
For each $\mathbf{i}=\left(i_{1},\ldots,i_{d}\right)\in\mathcal{M}$
consider the cube
\[
\mathcal{Q}_{\mathbf{i}}\coloneqq\stackrel[k=1]{d}{\prod}\left[\frac{i_{k}-1}{N},\frac{i_{k}}{N}\right].
\]
We attach to the flow of System (\ref{eq:OPINION DYNAMICS d>1})
the following ``Riemman sums'' like quantities, as in the one dimensional case, defined by
\begin{equation}
\widetilde{x}_{N}(t,s)\coloneqq\underset{\mathbf{i}\in\mathcal{\mathcal{M}}}{\sum}x_{\sigma\left(\mathbf{i}\right)}(t)\mathbf{1}_{\mathcal{Q}_{\left(\mathbf{i}\right)}}(s)=\stackrel[i_{d}=1]{N}{\sum}\cdots\stackrel[i_{1}=1]{N}{\sum}x_{\sigma\left(i_{1},\ldots,i_{d}\right)}(t)\mathbf{1}_{\mathcal{Q}_{\left(i_{1},\ldots,i_{d}\right)}}(s),\label{Riemman sums}
\end{equation}

\begin{equation}
\widetilde{m}_{N}(t,s)\coloneqq\underset{\mathbf{i}\in\mathcal{\mathcal{M}}}{\sum}m_{\sigma\left(\mathbf{i}\right)}(t)\mathbf{1}_{\mathcal{Q}_{\left(\mathbf{i}\right)}}(s)=\stackrel[i_{d}=1]{N}{\sum}\cdots\stackrel[i_{1}=1]{N}{\sum}m_{\sigma\left(i_{1},\ldots,i_{d}\right)}(t)\mathbf{1}_{\mathcal{Q}_{\left(i_{1},\ldots,i_{d}\right)}}(s).\label{riemman sums weights}
\end{equation}
Here the labeling variable $s$ varies on the $d$-dimensional unit cube $I^d$. Generalizing the constructions of Section \ref{graph limit eq 1D}, the functional $\Psi:I^{d}\times L^{\infty}(I^{d})\times L^{\infty}(I^{d})\rightarrow\mathbb{R}$
and the functions $x^{0}:I^{d}\rightarrow\mathbb{R}^{d},m^{0}:I^{d}\rightarrow\mathbb{R}$
are given and the functions $\psi_{i}^{N}$ and the initial data
$x_{i}^{0,N},m_{i}^{0,N}$ ($1\leq i\leq N^{d}$) are defined in terms
of these functions through the following formulas

\begin{equation}
\psi_{i}^{N}(\mathbf{x}_{N}(t),\mathbf{m}_{N}(t))\coloneqq N^{d}\int_{\mathcal{Q}_{\sigma^{-1}(i)}}\Psi(s,\widetilde{x}_{N}(t,\cdot),\widetilde{m}_{N}(t,\cdot))ds\label{eq:formula for psi sec 5}
\end{equation}
and 

\begin{equation}
x_{i}^{0,N}\coloneqq N^{d}\int_{\mathcal{Q}_{\sigma^{-1}(i)}}x^{0}(s)ds,\ m_{i}^{0,N}=N^{d}\int_{\mathcal{Q}_{\sigma^{-1}(i)}}m^{0}(s)ds.\label{initial data sec 5}
\end{equation}
If $\Psi$ is given by 

\[
\Psi(s,x(\cdot),m(\cdot))\coloneqq m(s)\iint_{I^{2d}}m(s_{\ast})m(s_{\ast\ast})\left(\mathbf{a}(x(s_{\ast\ast})-x(s))+\mathbf{a}(x(s_{\ast\ast})-x(s_{\ast}))\right)\mathbf{s}(x(s)-x(s_{\ast}))ds_{\ast}ds_{\ast\ast},
\]
then one readily checks that the $\psi_{i}^{N}$ in Formula (\ref{eq:formula for psi})
are recovered via Formula (\ref{eq:formula for psi sec 5}). Notice that $\widetilde{x}_{N}(0,s)$ and $\widetilde{m}_{N}(0,s)$ well approximate $x^{0}(s),m^{0}(s)$ because by Lebesgue's
differentiation theorem for a.e. $s\in I^{d}$ we have pointwise convergence 

\[
\widetilde{x}_{N}(0,s)=N^d\int_{\mathcal{Q}_{\left\lfloor s_{1}N\right\rfloor ,\ldots,\left\lfloor s_{d}N\right\rfloor }}x^{0}(\sigma)d\sigma\underset{N\rightarrow\infty}{\rightarrow}x^{0}(s)
\]
and 

\[
\widetilde{m}_{N}(0,s)=N^d\int_{\mathcal{Q}_{\left\lfloor s_{1}N\right\rfloor ,\ldots,\left\lfloor s_{d}N\right\rfloor }}m^{0}(\sigma)d\sigma\underset{N\rightarrow\infty}{\rightarrow}m^{0}(s).
\]
The functions $\widetilde{x}_{N}(t,s),\widetilde{m}_{N}(t,s)$ defined
through formulas (\ref{Riemman sums}), (\ref{riemman sums weights})
are governed by the following equation, which is the obvious higher dimensional version of Equation (\ref{eq:integro differential system intro}).
\begin{prop}
\label{equation governing xN,mN d>1} Let the assumptions of Proposition
\ref{well posedness of opinion dynamics } hold and let $(\mathbf{x}_{N}(t),\mathbf{m}_{N}(t))$
be the solution to System (\ref{eq:OPINION DYNAMICS d>1}) on $[0,T]$.
Let $\widetilde{x}_{N},\widetilde{m}_{N}$ be given by (\ref{Riemman sums}) and (\ref{riemman sums weights}) respectively.
Then 

\begin{equation}
\left\{ \begin{array}{lc}
\partial_{t}\widetilde{x}_{N}(t,s)=\displaystyle\int_{I^{d}}\widetilde{m}_{N}(t,s_{\ast})\mathbf{a}(\widetilde{x}_{N}(t,s_{\ast})-\widetilde{x}_{N}(t,s))ds_{\ast},\\[3mm]
\partial_{t}\widetilde{m}_{N}(t,s)=N^{d}\displaystyle\int_{\mathcal{Q}_{\left\lfloor s_{1}N\right\rfloor ,\ldots,\left\lfloor s_{d}N\right\rfloor }}\Psi(s_{\ast},\widetilde{x}_{N}(t,\cdot),\widetilde{m}_{N}(t,\cdot))ds_{\ast}. 
\end{array}\right.\label{eq:labled equation governing xN,mN}
\end{equation}
\end{prop}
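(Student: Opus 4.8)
The plan is to mirror, line for line, the proof of Proposition~\ref{equation governing xN,mN}, replacing the dyadic intervals $[\frac{i-1}{N},\frac{i}{N}]$ by the cubes $\mathcal{Q}_{\mathbf{i}}$ and carrying the bijection $\sigma$ along. Fix $t\in[0,T]$ and a point $s\in I^{d}$ lying in the interior of a unique cube $\mathcal{Q}_{\mathbf{i}_{0}}$ — the boundary $\bigcup_{\mathbf{i}}\partial\mathcal{Q}_{\mathbf{i}}$ being Lebesgue-null — and set $i_{0}\coloneqq\sigma(\mathbf{i}_{0})$; by construction this is exactly the cube indexed by $(\lfloor s_{1}N\rfloor,\ldots,\lfloor s_{d}N\rfloor)$ appearing in the statement, so $\widetilde{x}_{N}(t,s)=x_{i_{0}}(t)$ and $\widetilde{m}_{N}(t,s)=m_{i_{0}}(t)$.

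For the equation governing $\widetilde{x}_{N}$: since the indicators $\mathbf{1}_{\mathcal{Q}_{\mathbf{i}}}$ are time-independent and the family $\{\mathcal{Q}_{\mathbf{i}}\}_{\mathbf{i}\in\mathcal{M}}$ partitions $I^{d}$ up to a null set, differentiating \eqref{Riemman sums} in $t$ and invoking the first line of \eqref{eq:OPINION DYNAMICS d>1} gives $\partial_{t}\widetilde{x}_{N}(t,s)=\dot{x}_{i_{0}}(t)=\frac{1}{N^{d}}\sum_{j=1}^{N^{d}}m_{j}(t)\,\mathbf{a}(x_{j}(t)-x_{i_{0}}(t))$. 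On the other hand, substituting \eqref{Riemman sums}--\eqref{riemman sums weights} into $\int_{I^{d}}\widetilde{m}_{N}(t,s_{\ast})\mathbf{a}(\widetilde{x}_{N}(t,s_{\ast})-\widetilde{x}_{N}(t,s))\,ds_{\ast}$, splitting $I^{d}=\bigsqcup_{\mathbf{j}}\mathcal{Q}_{\mathbf{j}}$, using the constancy of $\widetilde{x}_{N}(t,\cdot),\widetilde{m}_{N}(t,\cdot)$ on each cube together with $\mathrm{Leb}(\mathcal{Q}_{\mathbf{j}})=N^{-d}$, the integral collapses to the same finite sum. This is the computation carried out for Proposition~\ref{equation governing xN,mN}, with $N^{-1}$ replaced by $N^{-d}$. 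For the equation governing $\widetilde{m}_{N}$: differentiating \eqref{riemman sums weights} and using the second line of \eqref{eq:OPINION DYNAMICS d>1} gives $\partial_{t}\widetilde{m}_{N}(t,s)=\dot{m}_{i_{0}}(t)=\psi_{i_{0}}^{N}(\mathbf{x}_{N}(t),\mathbf{m}_{N}(t))$, and the defining formula \eqref{eq:formula for psi sec 5} identifies the right-hand side with $N^{d}\int_{\mathcal{Q}_{\sigma^{-1}(i_{0})}}\Psi(s_{\ast},\widetilde{x}_{N}(t,\cdot),\widetilde{m}_{N}(t,\cdot))\,ds_{\ast}=N^{d}\int_{\mathcal{Q}_{\mathbf{i}_{0}}}\Psi(s_{\ast},\widetilde{x}_{N}(t,\cdot),\widetilde{m}_{N}(t,\cdot))\,ds_{\ast}$, which is precisely the claimed expression.

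I do not expect any genuine obstacle: everything in sight is a finite sum, the trajectory $(\mathbf{x}_{N},\mathbf{m}_{N})$ is of class $C^{1}$ on $[0,T]$ by Proposition~\ref{well posedness of opinion dynamics }, and the opinions remain pairwise separated there, so that $\mathbf{a}$ and $\mathbf{s}$ are only ever evaluated where they are continuous. The one point requiring care — and the only real difference from the one-dimensional argument — is the bookkeeping with the bijection $\sigma:\mathcal{M}\to\{1,\ldots,N^{d}\}$ and the identification of the cube containing $s$ with the multi-index $(\lfloor s_{1}N\rfloor,\ldots,\lfloor s_{d}N\rfloor)$; once this notation is pinned down the two displays above follow by the same manipulations as before. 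I would also record that $\tau\mapsto\int_{I^{d}}\widetilde{m}_{N}(\tau,s_{\ast})\mathbf{a}(\widetilde{x}_{N}(\tau,s_{\ast})-\widetilde{x}_{N}(\tau,s))\,ds_{\ast}$ and $\tau\mapsto N^{d}\int_{\mathcal{Q}_{\mathbf{i}_{0}}}\Psi(s_{\ast},\widetilde{x}_{N}(\tau,\cdot),\widetilde{m}_{N}(\tau,\cdot))\,ds_{\ast}$ are continuous in $\tau$, which legitimizes reading off these expressions as classical time derivatives and yields the asserted regularity of $\widetilde{x}_{N},\widetilde{m}_{N}$.
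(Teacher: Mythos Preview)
Your proposal is correct and follows essentially the same approach as the paper: fix $s$ in a cube $\mathcal{Q}_{\mathbf{i}_{0}}$, differentiate the Riemann sums termwise, invoke the ODE system, and for the $\widetilde{x}_{N}$-equation separately expand the integral over $I^{d}$ by splitting into cubes to recover the same finite sum. The paper's own proof is slightly terser and does not spell out the null-boundary or time-continuity remarks you add, but these are harmless extras; the arguments are otherwise identical.
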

\begin{proof}
We start with the equation for $\widetilde{x}_{N}(t,s)$. Fix $s\in\mathcal{Q}_{\mathbf{i}_0}$.
Then 
\[
\partial_{t}\widetilde{x}_{N}(t,s)=\dot{x}_{\sigma(\mathbf{i}_0)}(t)=\frac{1}{N^{d}}\stackrel[j=1]{N^d}{\sum}m_{j}(t)\mathbf{a}(x_{j}(t)-x_{\sigma(\mathbf{i}_0)}(t))=\frac{1}{N^{d}}\stackrel[j=1]{N^d}{\sum}m_{j}(t)\mathbf{a}(x_{j}(t)-\widetilde{x}_{N}(t,s)).
\]
On the other hand 
\begin{align*}
\int_{I^{d}}\widetilde{m}_{N}(t,s_{\ast})\mathbf{a}(\widetilde{x}_{N}(t,s_{\ast})-\widetilde{x}_{N}(t,s))ds_{\ast}&=\int_{I^{d}}\underset{\mathbf{i}\in\mathcal{\mathcal{M}}}{\sum}m_{\sigma\left(\mathbf{i}\right)}\mathbf{1}_{\mathcal{Q}_{\mathbf{i}}}(s_{\ast})\mathbf{a}\left(\underset{\mathbf{k}\in\mathcal{\mathcal{M}}}{\sum}x_{\sigma\left(\mathbf{k}\right)}(t)\mathbf{1}_{\mathcal{Q}_{\mathbf{k}}}(s_{\ast})-\widetilde{x}_{N}(t,s)\right)ds_{\ast}\\
&=\frac{1}{N^{d}}\underset{\mathbf{i}\in\mathcal{\mathcal{M}}}{\sum}m_{\sigma\left(\mathbf{i}\right)}\mathbf{a}\left(x_{\sigma\left(\mathbf{i}\right)}(t)-\widetilde{x}_{N}(t,s)\right)\\&=\frac{1}{N^{d}}\stackrel[j=1]{N^d}{\sum}m_{j}(t)\mathbf{a}\left(x_{j}(t)-\widetilde{x}_{N}(t,s)\right).
\end{align*}
The equation for $\widetilde{m}_{N}$ is due to the following identities
\[
\partial_{t}\widetilde{m}_{N}(t,s)=\underset{\mathbf{i}\in\mathcal{\mathcal{M}}}{\sum}\dot{m}_{\sigma\left(\mathbf{i}\right)}\mathbf{1}_{\mathcal{Q}_{\mathbf{i}}}(s)=N^{d}\int_{\mathcal{Q}_{\left\lfloor s_{1}N\right\rfloor ,\ldots,\left\lfloor s_{d}N\right\rfloor }}\Psi(s_{\ast},\widetilde{x}_{N}(t,\cdot),\widetilde{m}_{N}(t,\cdot))ds_{\ast}.
\]
\end{proof}
\subsection{Well posedness for $d>1$}
The point which requires most care for the proof of well posedness is point 2. in Lemma \ref{Estimate for psi}. Let us first state the assumptions we impose on the initial data and the other functions involved. 
\begin{itemize}
 \item[\textbf{A1}]  $d>1, \mathbf{a}(0)=0$  and $\mathbf{a}\in\mathrm{Lip}(\mathbb{R}^{d})$
with $L\coloneqq\mathrm{Lip}(\mathbf{a})$. 

\item[\textbf{A2}] i. $m^{0}\in L^{\infty}(I^d)$, $\int_{I^d} m_{0}(s)ds=1$ and $\frac{1}{M}\leq m^{0}\leq M$ for some $M>1$.  

ii. $\left|x^{0}\right|\leq X$ for some $X>0$ and $x^{0}:I^d\rightarrow\mathbb{R}^{d}$ is bi-Lipschitz, i.e. there is some $L_0>0$ such that for all $s_1,s_2\in I^{d}$
\[
\frac{1}{L_0}\left|s_{1}-s_{2}\right|\leq\left|x^{0}(s_{1})-x^{0}(s_{2})\right|\leq L_0\left|s_{1}-s_{2}\right|.
\]

 \item[\textbf{A3}]   i. $\mathbf{s}:\mathbb{R}^d\rightarrow \mathbb{R}$ is a measurable odd function ($\mathbf{s}(-x)=-\mathbf{s}(x)$).
 
ii. There is some locally $L^1$ function    $\mathbf{S}\geq0$ such that 
 \[ \left|\mathbf{s}(x_{1})-\mathbf{s}(x_{2})\right|\leq { \mathbf{S}(x_2)}\,
 \left|x_{1}-x_{2}\right|,\ x_{1},x_{2}\in\mathbb{R}^{d}.\]
\end{itemize}
\begin{rem}
The assumption \textbf{A2} that $x^0$ is bi-Lipschitz is strictly stronger than the assumption that it is 1-1 when $d=1$.     
\end{rem}
\begin{rem}
If $d>1$, then $\mathbf{s}(x)\coloneqq\left\{ \begin{array}{c}
\frac{x}{\left\Vert x\right\Vert },\ x\neq0\\
0,\ x=0.
\end{array}\right.$ is a particular example of hypothesis \textbf{A3 }as can be seen
from through the following elementary inequalities
\begin{align*}
\left|\frac{x_{1}}{\left|x_{1}\right|}-\frac{x_{2}}{\left|x_{2}\right|}\right|=\left|\frac{x_{1}\left|x_{2}\right|-\left|x_{1}\right|x_{2}}{\left|x_{1}\right|\left|x_{2}\right|}\right|=&\left|\frac{x_{1}\left(\left|x_{2}\right|-\left|x_{1}\right|\right)+\left|x_{1}\right|\left(x_{1}-x_{2}\right)}{\left|x_{1}\right|\left|x_{2}\right|}\right|\\
\leq&\frac{\left|\left|x_{1}\right|-\left|x_{2}\right|\right|}{\left|x_{2}\right|}+\frac{\left|x_{1}-x_{2}\right|}{\left|x_{2}\right|}\leq\frac{2\left|x_{1}-x_{2}\right|}{\left|x_{2}\right|}.
\end{align*}
Furthermore, it is clear that the condition i. in \textbf{A3} is more general than the assumption $ \mathbf{s}\in\mathrm{Lip}(\mathbb{R}^{d})$.
\end{rem}
\begin{lem}
\label{Estimate for psi d>1} Let hypotheses  \textbf{A1}-\textbf{A3} hold. Suppose that\\ 
$\bullet$ There is some $\overline{X}$ such that 
$\sup_{[0,T]\times I}\left|x_i\right|\leq\overline{X},$\ $i=1,2.$\\
$\bullet$ The function $x_{2}$ is bi-Lipschitz in the labeling variable, i.e. there is some $C>1$ such that for 
all $(t,s,s_{\ast})\in[0,T]\times I^d\times I^d$ it holds that 
\[\frac{1}{C}\left|s-s_{\ast}\right|\leq\left|x_{2}(t,s)-x_{2}(t,s_{\ast})\right|\leq C\left|s-s_{\ast}\right|,\ i=1,2.\]
Then 
\begin{align*}
&\int_{I^d}\left|\Psi(s,x_{1}(t,\cdot),m(t,\cdot))-\Psi(s,x_{2}(t,\cdot),m(t,\cdot))\right|ds\leq \mathcal{C}\,\underset{I^d}{\sup}\left|x_{1}(t,\cdot)-x_{2}(t,\cdot)\right|,    
\end{align*}
with {$\mathcal{C}=L\,\overline{M}(6+8\,\overline{X}\,C)$.} 
\end{lem}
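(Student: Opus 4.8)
The plan is to run the argument of part~2 of Lemma~\ref{Estimate for psi}, with the monotonicity of the $x_i$ (available only in one dimension) replaced by the bi-Lipschitz bound on $x_2$; this is exactly what makes the change of variables $s_\ast\mapsto x_2(t,s_\ast)$ admissible, which is the device needed to absorb the merely locally integrable modulus $\mathbf{S}$. Suppressing the time variable and setting $\mathbf{a}_i(s,s_\ast,s_{\ast\ast}):=\mathbf{a}(x_i(s_{\ast\ast})-x_i(s))+\mathbf{a}(x_i(s_{\ast\ast})-x_i(s_\ast))$ and $\mathbf{s}_i:=\mathbf{s}(x_i(s)-x_i(s_\ast))$, the first step is the algebraic identity $\mathbf{a}_1\mathbf{s}_1-\mathbf{a}_2\mathbf{s}_2=(\mathbf{a}_1-\mathbf{a}_2)\mathbf{s}_1+\mathbf{a}_2(\mathbf{s}_1-\mathbf{s}_2)$; since $\Psi(s,x_i,m)=m(s)\iint_{I^{2d}}m(s_\ast)m(s_{\ast\ast})\mathbf{a}_i\mathbf{s}_i\,ds_\ast ds_{\ast\ast}$ and $\int_{I^d}m=1$, the problem reduces to bounding $\int_{I^d}m(s)\bigl(J_1(s)+J_2(s)\bigr)\,ds$, where $J_1(s)$ collects the contribution of $(\mathbf{a}_1-\mathbf{a}_2)\mathbf{s}_1$ and $J_2(s)$ that of $\mathbf{a}_2(\mathbf{s}_1-\mathbf{s}_2)$.

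For $J_1$ one expands $\mathbf{a}_1-\mathbf{a}_2$ into two first differences and uses $\mathrm{Lip}(\mathbf{a})=L$ to get $|\mathbf{a}_1-\mathbf{a}_2|\le 4L\sup_{I^d}|x_1-x_2|$, together with a pointwise bound on $\mathbf{s}$ (available for the sphere projection, where $|\mathbf{s}|\le1$); since $\int_{I^d}m=1$, the integrations in $s_\ast,s_{\ast\ast}$ and then in $s$ contribute only constants, so $\int_{I^d}m(s)J_1(s)\,ds\lesssim L\sup_{I^d}|x_1-x_2|$. The term $J_2$ is the crux and the main obstacle, precisely because $\mathbf{s}$ is only assumed to obey the weak modulus estimate \textbf{A3}ii. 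Here one bounds $|\mathbf{a}_2|\le L\bigl(|x_2(s_{\ast\ast})-x_2(s)|+|x_2(s_{\ast\ast})-x_2(s_\ast)|\bigr)\le 4L\overline X$ and applies \textbf{A3}ii with the \emph{second} slot taken to be the $x_2$-difference:
\[
|\mathbf{s}_1-\mathbf{s}_2|\le \mathbf{S}\bigl(x_2(s)-x_2(s_\ast)\bigr)\,\bigl|(x_1(s)-x_1(s_\ast))-(x_2(s)-x_2(s_\ast))\bigr|\le 2\,\mathbf{S}\bigl(x_2(s)-x_2(s_\ast)\bigr)\sup_{I^d}|x_1-x_2|.
\]
The point is that $\mathbf{S}$ is now evaluated only at differences of the bi-Lipschitz map $x_2$. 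After bounding $m\le\overline M$, the change of variables $y=x_2(t,s_\ast)$ applies: $x_2(t,\cdot)$ is injective with $C$-Lipschitz inverse, hence a.e.\ differentiable with the Jacobian of the inverse $\le C^{d}$ (Rademacher plus the area formula), so
\[
\int_{I^d}m(s_\ast)\,\mathbf{S}\bigl(x_2(t,s)-x_2(t,s_\ast)\bigr)\,ds_\ast\le \overline M\,C^{d}\!\!\int_{B(0,2\overline X)}\!\!\mathbf{S}(z)\,dz<\infty,
\]
finite because $\mathbf{S}\in L^1_{\mathrm{loc}}$ and $|x_2|\le\overline X$. This is exactly where $d>1$ is used: for $\mathbf{S}(z)=2/|z|$, the modulus of the sphere projection, local integrability holds only in dimension $\ge2$, which is why the bi-Lipschitz initial datum (and hence bi-Lipschitz trajectory) is indispensable. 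Consequently $\int_{I^d}m(s)J_2(s)\,ds\lesssim L\,\overline M\,\overline X\,C\sup_{I^d}|x_1-x_2|$, the constant $C$ in the statement standing for the bi-Lipschitz constant and absorbing the dimensional factor $C^{d}$ and $\|\mathbf{S}\|_{L^1(B(0,2\overline X))}$.

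Adding the two bounds and keeping track of the numerical constants (using $\overline M>1$ to pull $\overline M$ out of the $J_1$ term) gives the assertion with $\mathcal{C}=L\,\overline M(6+8\,\overline X\,C)$. Every step except the bi-Lipschitz change of variables in $J_2$ is the obvious cube-version of the corresponding computation in Lemma~\ref{Estimate for psi}, so the only genuinely new work is the control of the singular factor $\mathbf{s}_1-\mathbf{s}_2$ just described; note in particular that the decomposition is chosen so that $\mathbf{S}$ never has to be integrated against differences of $x_1$, for which no bi-Lipschitz bound is assumed.
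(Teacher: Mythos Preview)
Your proof follows essentially the same route as the paper: the same $J_1/J_2$ split and, for $J_2$, the same key device---apply \textbf{A3}ii with the $x_2$-difference in the second slot and then change variables $s_\ast\mapsto x_2(t,s_\ast)$ via the bi-Lipschitz bound to reduce the singular factor to a local $L^1$ norm of $\mathbf{S}$. The one substantive difference is in the handling of $J_1$: you invoke a pointwise bound $|\mathbf{s}|\le 1$, which holds for the sphere projection but is not part of \textbf{A3} as stated; the paper instead retains the factor $|\mathbf{s}(x_1(s)-x_1(s_\ast))|+|\mathbf{s}(x_2(s)-x_2(s_\ast))|$ inside the integral and controls it by the same change-of-variables device, which is why the hypothesis carries the tag ``$i=1,2$'' (i.e.\ \emph{both} $x_i$ are assumed bi-Lipschitz), contrary to your reading that only $x_2$ is. Thus your remark that the decomposition can be arranged so that $\mathbf{S}$ is never integrated against $x_1$-differences is correct for $J_2$, but in $J_1$ you have effectively traded the bi-Lipschitz assumption on $x_1$ for an $L^\infty$ bound on $\mathbf{s}$ that the general hypothesis \textbf{A3} does not supply.
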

\textit{Proof}. 
Setting $\mathbf{a}_i(s,s_\ast,s_{\ast\ast}):=\mathbf{a}(x_i(s_{\ast\ast})-x_i(s))+\mathbf{a}(x_i(s_{\ast\ast})-x_i(s_{\ast}))$, $i=1,2$, we can  estimate A2
\begin{align*}
\left|\iint_{I^{2d}}m(s_{\ast})m(s_{\ast\ast})(\mathbf{a}_1(s,s_\ast,s_{\ast\ast})\mathbf{s}(x_{1}(s)-x_{1}(s_{\ast}))ds_{\ast}ds_{\ast\ast}-\mathbf{a}_2(s,s_\ast,s_{\ast\ast})\mathbf{s}(x_{2}(s)-x_{2}(s_{\ast})))ds_{\ast}ds_{\ast\ast}\right|\hspace{3.0 cm}\\
\leq J_{1}(t,s)+J_{2}(t,s),\hspace{14.3 cm}
\end{align*}
where we define
\begin{align*}
J_1(t,s)=L\,\iint_{I^{2d}} &m(s_{\ast})m(s_{\ast\ast})\left(2\left|x_{1}(s_{\ast\ast})-x_{2}(s_{\ast\ast})\right|+\left|x_{1}(s_{\ast})-x_{2}(s_{\ast})\right|+\left|x_{1}(s)-x_{2}(s)\right|\right)\\
&(|\mathbf{s}(x_1(s)-x_1(s_{\ast}))|+|\mathbf{s}(x_2(s)-x_2(s_{\ast}))|)\,ds_{\ast}ds_{\ast\ast},
\end{align*}
and
\[
J_2(t,s)=\iint_{I^{2d}}m(s_{\ast})m(s_{\ast\ast})\left|\mathbf{a}_2(s,s_\ast,s_{\ast\ast})\right| \left|\mathbf{s}(x_{1}(s)-x_{1}(s_{\ast}))-\mathbf{s}(x_{2}(s)-x_{2}(s_{\ast}))\right|ds_{\ast}ds_{\ast\ast}.
\]
We start with the estimate on $J_{2}(t,s)$. Using assumption $\mathbf{A3}$, we have that
\begin{align*}
&\left|\iint_{I^{2d}}m(s_{\ast})m(s_{\ast\ast})\mathbf{a}_{2}(s,s_{\ast},s_{\ast\ast})\mathbf{s}(x_{1}(s)-x_{1}(s_{\ast}))ds_{\ast}ds_{\ast\ast}\right.\\
&\left.-\iint_{I^{2d}}m(s_{\ast})m(s_{\ast\ast})\mathbf{a}_{2}(s,s_{\ast},s_{\ast\ast})\mathbf{s}(x_{2}(s)-x_{2}(s_{\ast}))ds_{\ast}ds_{\ast\ast}\right|\\
&\leq L\sup_{[0,T]\times I^d}\iint_{I^{2d}}(2|x_2(s_{\ast\ast})|+|x_2(s)|+|x_2(s_\ast)|)\,m(s_{\ast})m(s_{\ast\ast})\left|\mathbf{s}(x_{1}(s)-x_{1}(s_{\ast}))-\mathbf{s}(x_{2}(s)-x_{2}(s_{\ast}))\right|ds_{\ast}ds_{\ast\ast}\\
&\leq 4\,L\,\overline{X}\,\iint_{I^{2d}} m(s_{\ast})m(s_{\ast\ast})\, \mathbf{S}(x_2(s)-x_2(s_\ast))\,(|x_1(s)-x_2(s)|+|x_1(s_\ast)-x_2(s_\ast)|)\,ds_\ast\,ds_{\ast\ast}\\
&\leq8\,L\,\overline{X}\,\overline{M}\,\underset{I^d}{\sup}\left|x_{1}(t,\cdot)-x_{2}(t,\cdot)\right|\;
\sup_{s\in I^d} \int_{I^{d}}\mathbf{S}(x_2(s)-x_2(s_\ast))\,ds_{\ast}.
\end{align*}
 From the bi-Lipschitz assumption on $x_2$ we have $|(\nabla_s x_{2})^{-1}|\leq C$ so that 
\begin{align*}
\int_{I^{d}}\mathbf{S}(x_2(s)-x_2(s_\ast))\,ds_{\ast}
\leq   C\,\int_{I^{d}}\mathbf{S}(x_2(s)-y)\,dy\leq C\,\|S\|_{L^1(K)},
\end{align*}
for some compact set $K\subset \mathbb{R}^d$.  Therefore 
\begin{align*}
\int_{I^d}\ m(t,s)\left |J_2{(t,s)}\right|ds\leq 
8\,L\,\overline{X}\,\overline{M}\,C\,\|\mathbf{S}\|_{L^1(K)}\,\underset{I^d}{\sup}\left|x_{1}(t,\cdot)-x_{2}(t,\cdot)\right|.
\end{align*}
 The estimate for $J_1{(t,s)}$ follows in a similar way,
\begin{align*}
&\int_{I^d} m(t,s)\left|J_{1}(t,s)\right|ds\\
&\qquad\leq 3L\,\underset{I^d}{\sup}\left|x_{1}(t,\cdot)-x_{2}(t,\cdot)\right|\,\int_{I^{2d}}m(s)\,m(s_\ast)\, (|\mathbf{s}(x_1(s)-x_1(s_\ast))|+|\mathbf{s}(x_2(s)-x_2(s_\ast))|)\,ds\,ds_\ast\\
&\qquad \leq 6\,L\,\overline{M}\|\mathbf{S}\|_{L^1(K)}\,\underset{I^d}{\sup}\left|x_{1}(t,\cdot)-x_{2}(t,\cdot)\right|.
\end{align*}
\begin{flushright}
$\square$
\par\end{flushright}
\subsection{Graph limit for $d>1$}
Consider 
\begin{align*}
\xi_{N}(t)\coloneqq\left\Vert \widetilde{x}_{N}(t,\cdot)-x(t,\cdot)\right\Vert _{L^{1}(I^{d})},\ \zeta_{N}(t)\coloneqq\left\Vert \widetilde{m}_{N}(t,\cdot)-m(t,\cdot)\right\Vert _{L^{1}(I^{d})}.    
\end{align*}
The symbol $\lesssim$
stands for inequality up to a constant which may depend only on $L,L_0,M,X,T,\mathbf{S},A2\mathbf{S}_{\infty}$.
\begin{thm}
\label{the graph limit: main thm high dim} Let the hypotheses \textbf{A1}-\textbf{A3} hold. 
Let $(x,m)\in C^{1}\left([0,T];C(I^{d})\right)\oplus C^{1}\left([0,T];L^{\infty}(I^{d})\right)$
be the solution to Equation \eqref{eq:COUPLED EQUATION}. Assume that $\mathbf{S}$ is chosen so that the system \eqref{opinion dynamics sec2} has a well defined flow. Let $(\mathbf{x}_{N},\mathbf{m}_{N})\in C^{1}\left(\left[0,T\right];\mathbb{R}^{dN^d}\times{\mathbb{R}}^{N^d}\right)$
be the solution to the system \eqref{opinion dynamics sec2}. Then
\[
\left\Vert \widetilde{x}_{N}-x\right\Vert _{C([0,T];L^{1}(I^{d}))}+\left\Vert \widetilde{m}_{N}-m\right\Vert _{C([0,T];L^{1}(I^{d}))}\underset{N\rightarrow\infty}{\rightarrow}0.
\]
\end{thm}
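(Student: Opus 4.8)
\textit{Sketch.} The plan is to reproduce the scheme of Theorem \ref{the graph limit: main thm}, with the $L^{2}(I)$-norms replaced throughout by $L^{1}(I^{d})$-norms, and with the role played in \cite{1} (and in the $d=1$ case) by the $A_{N},B_{N}$ set-decomposition taken over instead by a change of variables based on the bi-Lipschitz character of $x(t,\cdot)$ together with assumption \textbf{A3}. First I would collect the uniform a priori bounds. Since the initial data \eqref{initial data sec 5} are cube-averages of $x^{0},m^{0}$, Lemma \ref{basic properties } gives, uniformly in $N$, $\widetilde m_{N}(t,\cdot)\ge0$, $\int_{I^{d}}\widetilde m_{N}(t,s)\,ds=1$, $\sup_{[0,T]}\|\widetilde m_{N}(t,\cdot)\|_{\infty}\le\overline M$ and $\sup_{[0,T]\times I^{d}}|\widetilde x_{N}|\le\overline X$; the solution $(x,m)$ of \eqref{eq:COUPLED EQUATION} (posed on $I^{d}$) satisfies the analogous bounds, and running the Gronwall argument of Claim \ref{opinions remain seperated } in both directions, together with \textbf{A2}, yields the two-sided estimate $\tfrac{1}{C}|s_{1}-s_{2}|\le|x(t,s_{1})-x(t,s_{2})|\le C|s_{1}-s_{2}|$ on $[0,T]\times I^{d}$ with $C=C(L,L_{0},T)$. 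Hence $x(t,\cdot)$ and its inverse are Lipschitz, so by the area formula, for any fixed $s$,
\[
\int_{I^{d}}\mathbf S\big(x(t,s)-x(t,s_{\ast})\big)\,ds_{\ast}\le C^{d}\,\|\mathbf S\|_{L^{1}(K)},\qquad K:=\overline{B(0,2\overline X)},
\]
and the same bound holds with the integration carried out in $s$ with $s_{\ast}$ frozen. This is the one place where $d>1$ forces the bi-Lipschitz hypothesis in place of mere injectivity.

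Next I would estimate the time derivatives of $\xi_{N}$ and $\zeta_{N}$, using $\tfrac{d}{dt}\|f(t,\cdot)\|_{1}\le\|\partial_{t}f(t,\cdot)\|_{1}$. For $\xi_{N}$: subtracting the first lines of Proposition \ref{equation governing xN,mN d>1} and \eqref{eq:COUPLED EQUATION}, adding and subtracting $m(t,s_{\ast})\mathbf a(\widetilde x_{N}(t,s_{\ast})-\widetilde x_{N}(t,s))$, and using $|\mathbf a|\le2L\overline X$, $\mathrm{Lip}(\mathbf a)=L$, $\int m=1$ and $\|m\|_{\infty}\le\overline M$, one gets $\dot\xi_{N}(t)\lesssim\xi_{N}(t)+\zeta_{N}(t)$ exactly as in \cite{1}. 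For $\zeta_{N}$: I would write $\partial_{t}(\widetilde m_{N}-m)(t,s)=h_{N}(t,s)+g_{N}(t,s)$ with $g_{N}(t,s):=N^{d}\!\int_{\mathcal Q(s)}\Psi(s_{\ast},x(t,\cdot),m(t,\cdot))\,ds_{\ast}-\Psi(s,x(t,\cdot),m(t,\cdot))$, where $\mathcal Q(s)$ is the cube of side $1/N$ containing $s$, and $h_{N}$ the remaining term. Since $s\mapsto\Psi(s,x(t,\cdot),m(t,\cdot))$ is bounded on $I^{d}$, Lebesgue differentiation and dominated convergence give $\|g_{N}(t,\cdot)\|_{1}\to0$ for each $t$ and then $\int_{0}^{T}\|g_{N}(t,\cdot)\|_{1}\,dt\to0$. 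For $h_{N}$, Jensen's inequality on each cube followed by Fubini collapses $\|h_{N}(t,\cdot)\|_{1}\le\int_{I^{d}}|\Psi(s,\widetilde x_{N},\widetilde m_{N})-\Psi(s,x,m)|\,ds$, which I would split through the intermediate term $\Psi(s,\widetilde x_{N},m)$.

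The ``weights part'' $\int_{I^{d}}|\Psi(s,\widetilde x_{N},\widetilde m_{N})-\Psi(s,\widetilde x_{N},m)|\,ds$ is estimated as in part 1 of Lemma \ref{Estimate for psi} (using $\widetilde m_{N}\ge0$, $\int\widetilde m_{N}=1$); the only new point is that $\int_{I^{d}}m(s_{\ast})|\mathbf s(\widetilde x_{N}(s)-\widetilde x_{N}(s_{\ast}))|\,ds_{\ast}$ must be seen to be bounded uniformly in $N$, which follows from $|\mathbf s(\widetilde x_{N}(s)-\widetilde x_{N}(s_{\ast}))|\le|\mathbf s(x(s)-x(s_{\ast}))|+\mathbf S(x(s)-x(s_{\ast}))\big(|\widetilde x_{N}-x|(s)+|\widetilde x_{N}-x|(s_{\ast})\big)$, the change-of-variables bound above, $\|\widetilde x_{N}-x\|_{\infty}\le2\overline X$ and $\mathbf S\in L^{1}_{\mathrm{loc}}$; this yields $\lesssim\zeta_{N}(t)$. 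The ``opinions part'' $\int_{I^{d}}|\Psi(s,\widetilde x_{N},m)-\Psi(s,x,m)|\,ds$ is the heart of the matter, and the step I expect to be the main obstacle, since $\widetilde x_{N}$ is only piecewise constant and is not itself bi-Lipschitz. The idea is to re-run the computation of Lemma \ref{Estimate for psi d>1} while keeping the $L^{1}(I^{d})$-norm rather than the sup-norm: expanding the integrand into an $\mathbf a$-difference term (absorbed by $\mathrm{Lip}(\mathbf a)$) and an $\mathbf s$-difference term, and invoking \textbf{A3}.ii always with base point $x(s)-x(s_{\ast})$ — which is bounded away from $0$ by the two-sided estimate — every instance of the singular weight appears as $\mathbf S(x(t,s)-x(t,s_{\ast}))$; after replacing each $\mathbf s(\widetilde x_{N}(s)-\widetilde x_{N}(s_{\ast}))$ by $\mathbf s(x(s)-x(s_{\ast}))$ plus an $\mathbf S$-controlled remainder, and then integrating in whichever of $s,s_{\ast}$ does not carry the factor $|\widetilde x_{N}-x|$ one wishes to retain, the change-of-variables bound removes $\mathbf S$ at the cost of a constant, leaving $\lesssim\|\widetilde x_{N}-x\|_{L^{1}(I^{d})}=\xi_{N}(t)$ — with no residual error term, in contrast to the $o_{N}(1)$ produced by the $A_{N},B_{N}$ splitting in $d=1$.

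Collecting the pieces, $\dot\xi_{N}(t)+\dot\zeta_{N}(t)\lesssim\xi_{N}(t)+\zeta_{N}(t)+\|g_{N}(t,\cdot)\|_{1}$, so Gronwall's lemma gives
\[
(\xi_{N}+\zeta_{N})(t)\le C\Big((\xi_{N}+\zeta_{N})(0)+\int_{0}^{T}\|g_{N}(\tau,\cdot)\|_{1}\,d\tau\Big)e^{Ct}.
\]
Since $x^{0}$ is bi-Lipschitz, hence continuous, and $m^{0}\in L^{\infty}$, the Riemann sums \eqref{Riemman sums}, \eqref{riemman sums weights} converge at $t=0$ to $x^{0},m^{0}$ in $L^{1}(I^{d})$, so $(\xi_{N}+\zeta_{N})(0)\to0$; combined with $\int_{0}^{T}\|g_{N}(\tau,\cdot)\|_{1}\,d\tau\to0$ this completes the argument.
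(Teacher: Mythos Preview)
Your proposal is correct and follows essentially the same route as the paper: the same $h_{N}/g_{N}$ splitting, the same Jensen/Fubini collapse of $\|h_{N}\|_{1}$ to $\int_{I^{d}}|\Psi(s,\widetilde x_{N},\widetilde m_{N})-\Psi(s,x,m)|\,ds$, the same intermediate term $\Psi(s,\widetilde x_{N},m)$, and the same key idea of always placing the base point of \textbf{A3}.ii at $x(t,s)-x(t,s_{\ast})$ so that the bi-Lipschitz change of variables turns $\mathbf S(x(t,s)-x(t,s_{\ast}))$ into $\|\mathbf S\|_{L^{1}(K)}$; in particular you correctly anticipate that no $o_{N}(1)$ residual appears here, unlike in the $d=1$ argument. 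If anything, you are slightly more explicit than the paper about why the factors $|\mathbf s(\widetilde x_{N}(s)-\widetilde x_{N}(s_{\ast}))|$ remain uniformly integrable (the paper simply refers back to the $d=1$ lemma, where $|\mathbf s|\le\mathbf S_{\infty}$ was assumed), so your extra step there is well placed.
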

\textit{Proof}. \textbf{Step 1}. \textbf{Time derivative of} $\zeta_{N}(t)$.
The computation of the time derivative of $\zeta_{N}$ is essentially identical to the case $d=1$, but we include it for clarity. For readability
we set $\mathbf{i}_{N}(s)\coloneqq\left(\left\lfloor s_{1}N\right\rfloor ,\ldots,\left\lfloor s_{d}N\right\rfloor \right)$. 

\begin{align*}
\dot{\zeta_{N}}(t)\leq\int_{I^{d}}\left|N^{d}\int_{\mathcal{Q}_{\mathbf{i}_{N}(s)}}\Psi(s_{\ast},\widetilde{x}_{N}(t,\cdot),\widetilde{m}_{N}(t,\cdot))ds_{\ast}-\Psi(s,x(t,\cdot),m(t,\cdot))\right|ds\\
\leq\int_{I^{d}}\underset{\coloneqq h_{N}(t,s)}{\underbrace{\left|N^{d}\int_{\mathcal{Q}_{\mathbf{i}_{N}(s)}}\Psi(s_{\ast},\widetilde{x}_{N}(t,\cdot),\widetilde{m}_{N}(t,\cdot))-\Psi(s_{\ast},x(t,\cdot),m(t,\cdot))ds_{\ast}\right|}}ds\\
+\int_{I^{d}}\underset{\coloneqq g_{N}(t,s)}{\underbrace{\left|N^{d}\int_{\mathcal{Q}_{\mathbf{i}_{N}(s)}}\Psi(s_{\ast},x(t,\cdot),m(t,\cdot))ds_{\ast}-\Psi(s,x(t,\cdot),m(t,\cdot))\right|}}ds.
\end{align*}
By Lebesgue differentiation theorem, for each $t\in[0,T]$ it holds
that $g_{N}(t,s){\rightarrow}0$ as $N\rightarrow\infty$ pointwise
a.e. $s\in I^{d}$. In addition, $\left\Vert x\right\Vert _{C([0,T]\times I^{d})},\left\Vert m\right\Vert _{C([0,T];L^{\infty}(I^{d}))}$
are bounded which implies that $g_{N}(t,s)$ is uniformly bounded
(with respect to $N$) so that by the dominated convergence theorem
we find that for each $t\in[0,T]$ it holds that
$\left\Vert g_{N}(t,\cdot)\right\Vert _{1}\underset{N\rightarrow\infty}{\rightarrow}0.$
We now estimate the first integral as
\begin{align*}
\left\Vert h_{N}(t,\cdot)\right\Vert _{1}=&\,N^{d}\int_{I^{d}}\left|\int_{\mathcal{Q}_{\mathbf{i}_{N}(s)}}\Psi(s_{\ast},\widetilde{x}_{N}(t,\cdot),\widetilde{m}_{N}(t,\cdot))-\Psi(s_{\ast},x(t,\cdot),m(t,\cdot))ds_{\ast}\right|ds\\=&\,N^{d}\underset{\mathbf{i}\in\mathcal{M}}{\sum}\int_{\mathcal{Q}_{\mathbf{i}}}\left|\int_{\mathcal{Q}_{\mathbf{i}_{N}(s)}}\Psi(s_{\ast},\widetilde{x}_{N}(t,\cdot),\widetilde{m}_{N}(t,\cdot))-\Psi(s_{\ast},x(t,\cdot),m(t,\cdot))ds_{\ast}\right|ds\\\leq& \,N^{d}\underset{\mathbf{i}\in\mathcal{M}}{\sum}\int_{\mathcal{Q}_{\mathbf{i}}}\int_{\mathcal{Q}_{\mathbf{i}_{N}(s)}}\left|\Psi(s_{\ast},\widetilde{x}_{N}(t,\cdot),\widetilde{m}_{N}(t,\cdot))-\Psi(s_{\ast},x(t,\cdot),m(t,\cdot))\right|ds_{\ast}ds\\=&\,N^{d}\underset{\mathbf{i}\in\mathcal{M}}{\sum}\int_{\mathcal{Q}_{\mathbf{i}}}\int_{\mathcal{Q}_{\mathbf{i}}}\left|\Psi(s_{\ast},\widetilde{x}_{N}(t,\cdot),\widetilde{m}_{N}(t,\cdot))-\Psi(s_{\ast},x(t,\cdot),m(t,\cdot))\right|ds_{\ast}ds\\=&\,\underset{\mathbf{i}\in\mathcal{M}}{\sum}\int_{\mathcal{Q}_{\mathbf{i}}}\left|\Psi(s_{\ast},\widetilde{x}_{N}(t,\cdot),\widetilde{m}_{N}(t,\cdot))-\Psi(s_{\ast},x(t,\cdot),m(t,\cdot))\right|ds_{\ast}\\=&\int_{I^{d}}\left|\Psi(s_{\ast},\widetilde{x}_{N}(t,\cdot),\widetilde{m}_{N}(t,\cdot))-\Psi(s_{\ast},x(t,\cdot),m(t,\cdot))\right|ds_{\ast}.\\
\end{align*}
This in turn leads to the following bound
\begin{align*}
\left\Vert h_{N}(t,\cdot)\right\Vert _{1}
\leq&\int_{I^{d}}\left|\Psi(s_{\ast},\widetilde{x}_{N}(t,\cdot),\widetilde{m}_{N}(t,\cdot))-\Psi(s_{\ast},\widetilde{x}_{N}(t,\cdot),m(t,\cdot))\right|ds_{\ast}\\
&+\int_{I^{d}}\left|\Psi(s_{\ast},\widetilde{x}_{N}(t,\cdot),m(t,\cdot))-\Psi(s_{\ast},x(t,\cdot),m(t,\cdot))\right|ds_{\ast}\\=&\int_{I^{d}}\left|\Psi(s,\widetilde{x}_{N}(t,\cdot),\widetilde{m}_{N}(t,\cdot))-\Psi(s,\widetilde{x}_{N}(t,\cdot),m(t,\cdot))\right|ds\\&+\int_{I^{d}}\left|\Psi(s,\widetilde{x}_{N}(t,\cdot),m(t,\cdot))-\Psi(s,x(t,\cdot),m(t,\cdot))\right|ds.
\end{align*}
At this stage we state and prove the following simple adaption of Lemma \ref{Estimate for psi d>1}. We precisely need the following result.
\begin{lem}
It holds that 
\[
i.\int_{I^{d}}\left|\Psi(s,\widetilde{x}_{N}(t,\cdot),\widetilde{m}_{N}(t,\cdot))-\Psi(s,\widetilde{x}_{N}(t,\cdot),m(t,\cdot))\right|ds\lesssim\left\Vert \widetilde{m}_{N}(t,\cdot)-m(t,\cdot)\right\Vert _{1}.
\]
and 
\[
ii.\int_{I^{d}}\left|\Psi(s,\widetilde{x}_{N}(t,\cdot),m(t,\cdot))-\Psi(s,x(t,\cdot),m(t,\cdot))\right|ds\lesssim\left\Vert \widetilde{x}_{N}(t,\cdot)-x(t,\cdot)\right\Vert _{1}.
\] 
\label{L1 contraction estimate}
\end{lem}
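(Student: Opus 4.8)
\textit{Proof idea.} Both inequalities are the $L^{1}$ analogues of estimates we have already proved, so the plan is to recycle those arguments almost verbatim. For i.\ the opinion slot is frozen at $\widetilde{x}_{N}$ and only the weight is varied, so I would mimic Step~1 of Lemma~\ref{Estimate for psi}: expand $\Psi$, write the difference as a ``prefactor'' term $(\widetilde{m}_{N}(s)-m(s))\times(\cdots)$ plus a ``product'' term $m(s)\times\iint(\widetilde{m}_{N}\widetilde{m}_{N}-mm)(\cdots)$, telescope $\widetilde{m}_{N}(s_{\ast})\widetilde{m}_{N}(s_{\ast\ast})-m(s_{\ast})m(s_{\ast\ast})=\widetilde{m}_{N}(s_{\ast})(\widetilde{m}_{N}(s_{\ast\ast})-m(s_{\ast\ast}))+(\widetilde{m}_{N}(s_{\ast})-m(s_{\ast}))m(s_{\ast\ast})$, and use the unit masses (available for $\widetilde{m}_{N}$ by Lemma~\ref{basic properties }) together with the bound $|\mathbf{a}(x(s_{\ast\ast})-x(s))+\mathbf{a}(x(s_{\ast\ast})-x(s_{\ast}))|\leq4L\overline{X}$ to collapse the double integral to $\|\widetilde{m}_{N}-m\|_{1}$; integrating in $s$ finishes i. The only new point relative to $d=1$ is that $\mathbf{s}$ now appears at $\widetilde{x}_{N}(s)-\widetilde{x}_{N}(s_{\ast})$, but this argument ranges over a fixed ball of radius $2\overline{X}$ on which $\mathbf{s}$ is bounded (by $1$ in the canonical case $\mathbf{s}(x)=x/\|x\|$), so it plays the role of the constant $\mathbf{S}_{\infty}$ from the one-dimensional estimate.

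For ii.\ I would apply Lemma~\ref{Estimate for psi d>1} with $x_{1}=\widetilde{x}_{N}(t,\cdot)$ and $x_{2}=x(t,\cdot)$. What makes this legitimate is that $x(t,\cdot)$ \emph{is} bi-Lipschitz in $s$, uniformly in $t\in[0,T]$: this follows by combining Claim~\ref{opinions remain seperated } (which gives $|x(t,s)-x(t,s_{\ast})|\gtrsim|x^{0}(s)-x^{0}(s_{\ast})|$, hence $\gtrsim|s-s_{\ast}|$ by \textbf{A2}) with the upper Lipschitz bound coming from $\mathbf{a}\in\mathrm{Lip}$. Concretely, with $\mathbf{a}_{x}:=\mathbf{a}(x(s_{\ast\ast})-x(s))+\mathbf{a}(x(s_{\ast\ast})-x(s_{\ast}))$ I would split $\mathbf{a}_{N}\mathbf{s}(\widetilde{x}_{N}(s)-\widetilde{x}_{N}(s_{\ast}))-\mathbf{a}_{x}\mathbf{s}(x(s)-x(s_{\ast}))=(\mathbf{a}_{N}-\mathbf{a}_{x})\mathbf{s}(\widetilde{x}_{N}(s)-\widetilde{x}_{N}(s_{\ast}))+\mathbf{a}_{x}(\mathbf{s}(\widetilde{x}_{N}(s)-\widetilde{x}_{N}(s_{\ast}))-\mathbf{s}(x(s)-x(s_{\ast})))$; the first summand is handled by Lipschitzness of $\mathbf{a}$, boundedness of $\mathbf{s}$ on the relevant ball, and $m\leq\overline{M}$, yielding $\lesssim\|\widetilde{x}_{N}-x\|_{1}$ after integration; the second summand is handled by \textbf{A3}\,ii., namely $|\mathbf{s}(\widetilde{x}_{N}(s)-\widetilde{x}_{N}(s_{\ast}))-\mathbf{s}(x(s)-x(s_{\ast}))|\leq\mathbf{S}(x(s)-x(s_{\ast}))(|\widetilde{x}_{N}(s)-x(s)|+|\widetilde{x}_{N}(s_{\ast})-x(s_{\ast})|)$, followed by the change of variables $y=x(t,s_{\ast})$, whose Jacobian is controlled by the bi-Lipschitz constant, to get $\sup_{s}\int_{I^{d}}\mathbf{S}(x(t,s)-x(t,s_{\ast}))\,ds_{\ast}\leq C\|\mathbf{S}\|_{L^{1}(K)}$ for a fixed compact $K\subset\mathbb{R}^{d}$; together with $|\mathbf{a}_{x}|\leq4L\overline{X}$ and $m\leq\overline{M}$ this again gives $\lesssim\|\widetilde{x}_{N}-x\|_{1}$, and summing the two bounds proves ii.

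The step I expect to require the most care is the one just described --- controlling the term where the singular weight $\mathbf{S}$ is integrated against the opinions. The naive move, a change of variables sending $s_{\ast}\mapsto\widetilde{x}_{N}(t,s_{\ast})$, is unavailable because $\widetilde{x}_{N}(t,\cdot)$ is a step function, not a bi-Lipschitz map. The whole point of the decomposition above is that in the only term where $\mathbf{S}$ genuinely appears, its argument is the increment of the \emph{continuous} flow $x(t,\cdot)$, which \emph{is} bi-Lipschitz by \textbf{A2} and Claim~\ref{opinions remain seperated } --- this is precisely what hypothesis \textbf{A2} is there to guarantee --- while every remaining occurrence of $\mathbf{s}$ sits at an increment of the uniformly bounded $\widetilde{x}_{N}$ and is therefore harmless. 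With this observation in hand both i.\ and ii.\ reduce to the routine manipulations already carried out in Lemma~\ref{Estimate for psi} and Lemma~\ref{Estimate for psi d>1}.
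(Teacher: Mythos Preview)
Your proposal is correct and follows essentially the same route as the paper. For i.\ both you and the paper simply recycle Step~1 of Lemma~\ref{Estimate for psi}; for ii.\ the paper uses the very decomposition you describe, bounding $J_{1}$ (the $\mathbf{a}_{N}-\mathbf{a}_{x}$ term with the $\mathbf{s}$-factor absorbed as a constant) by $\|\widetilde{x}_{N}-x\|_{1}$ directly, and handling $J_{2}$ via \textbf{A3}\,ii.\ with $\mathbf{S}$ evaluated at $x(t,s)-x(t,s_{\ast})$ followed by the bi-Lipschitz change of variables $y=x(t,s_{\ast})$ to obtain $\sup_{s}\int_{I^{d}}\mathbf{S}(x(t,s)-x(t,s_{\ast}))\,ds_{\ast}\lesssim\|\mathbf{S}\|_{L^{1}(K)}$. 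Your closing remark that the crucial point is to arrange for $\mathbf{S}$ to sit at an increment of the continuous flow $x(t,\cdot)$ (bi-Lipschitz by \textbf{A2} and Claim~\ref{opinions remain seperated }) rather than the step function $\widetilde{x}_{N}$ is exactly the mechanism the paper exploits. One cosmetic caveat: Lemma~\ref{Estimate for psi d>1} as stated yields a $\sup_{I^{d}}|x_{1}-x_{2}|$ bound, so you cannot cite it verbatim; but the detailed $L^{1}$ argument you then spell out is precisely what is needed and matches the paper.
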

\textit{Proof}. Point i. follows from the same argument of 1. in Lemma \ref{Estimate for psi}, so let us concentrate on point ii. We estimate 
\begin{align*}
&\left|\iint_{I^{2d}}m(s_{\ast})m(s_{\ast\ast})\mathbf{a}_{N}(s,s_{\ast},s_{\ast\ast})\mathbf{s}(\widetilde{x}_{N}(s)-\widetilde{x}_{N}(s_{\ast}))ds_{\ast}ds_{\ast\ast}\right.\\    
&\left.-\iint_{I^{2d}}m(s_{\ast})m(s_{\ast\ast})\mathbf{a}(s,s_{\ast},s_{\ast\ast})\mathbf{s}(x(s)-x(s_{\ast}))ds_{\ast}ds_{\ast\ast}\right|\\
\lesssim& \iint_{I^{2d}}m(s_{\ast})m(s_{\ast\ast})\left(2\left|\widetilde{x}_{N}(s_{\ast\ast})-x(s_{\ast\ast})\right|+\left|\widetilde{x}_{N}(s_{\ast})-x(s_{\ast})\right|+\left|\widetilde{x}_{N}(s)-x(s)\right|\right)ds_{\ast}ds_{\ast\ast}\\
&+\iint_{I^{2d}}m(s_{\ast})m(s_{\ast\ast})\left|\mathbf{a}(x(s_{\ast\ast})-x(s))+\mathbf{a}(x(s_{\ast\ast})-x(s_{\ast}))\right|\\
&\times\left|\mathbf{s}(\widetilde{x}_{N}(s)-\widetilde{x}_{N}(s_{\ast}))-\mathbf{s}(x(s)-x(s_{\ast}))\right|ds_{\ast}ds_{\ast\ast}\\&\coloneqq J_{1}(t,s)+J_{2}(t,s).
\end{align*}
We estimate seperately $\int_{I^d}m(t,s)\left|J_{1}(t,s)\right|ds$
and $\int_{I^d}m(t,s)\left|J_{2}(t,s)\right|ds$. It is straightforward
to check
\begin{equation}
\int_{I^{d}}m(t,s)\left|J_{1}(t,s)\right|ds\lesssim\left\Vert \widetilde{x}_{N}(t,\cdot)-x(t,\cdot)\right\Vert _{1}.\label{eq:J1 es-1}
\end{equation}
We further estimate 
\begin{align*}
\int_{I^{d}}m(t,s)\left|J_{2}(t,s)\right|ds&\lesssim\iint_{I^{2d}}\mathbf{S}(x(t,s)-x(t,s_{\ast}))\left(\left|\widetilde{x}_{N}(t,s)-x(t,s)\right|+\left|\widetilde{x}_{N}(t,s_{\ast})-x(t,s_{\ast})\right|\right)ds_{\ast}ds\\ &\leq2\left\Vert \mathbf{S}\right\Vert _{L^{1}(K)}\left\Vert \widetilde{x}_{N}(t,\cdot)-x(t,\cdot)\right\Vert _{1},
\end{align*}
for some compact set $K\subset \mathbb{R}^d$.
\begin{flushright}
    $\square$
\end{flushright}
\textbf{Step 2. The time derivative of $\xi_{N}(t)$.} The time derivative
of $\dot{\xi_{N}}(t)$ is mastered exactly as in {[}1{]}. Following
the argument in {[}1{]} one finds that 
\begin{equation}
\dot{\xi}_{N}(t)\lesssim\xi_{N}(t)+\zeta_{N}(t).\label{eq:-3-1}
\end{equation}
\textbf{Step 3. Conclusion. } The combination of Inequality (\ref{eq:-3-1}) and Lemma \ref{L1 contraction estimate} yields 
\begin{align*}
\dot{\xi}_{N}(t)+\dot{\zeta}_{N}(t)\lesssim\xi_{N}(t)+\zeta_{N}(t)+\frac{1}{2}\left\Vert g_{N}(t,\cdot)\right\Vert _{1},   \end{align*}
i.e.
\begin{align*}
\xi_{N}(t)+\zeta_{N}(t)\lesssim\xi_{N}(0)+\zeta_{N}(0)+\int_{0}^{t}(\xi_{N}(\tau)+\zeta_{N}(\tau))d\tau+\int_{0}^{T}\left\Vert g_{N}(\tau,\cdot)\right\Vert _{1}d\tau.    
\end{align*}
Applying Gronwall's lemma entails 

\[
\xi_{N}(t)+\zeta_{N}(t)\leq C\left(\xi_{N}(0)+\zeta_{N}(0)+\int_{0}^{T}\left\Vert g_{N}(\tau,\cdot)\right\Vert _{1}d\tau+o_{N}(1)T\right)\exp\left(Ct\right).
\]
Since $\left\Vert g_{N}(\tau,\cdot)\right\Vert _{2}^{2}$ is uniformly
bounded, by dominated convergence
$\int_{0}^{T}\left\Vert g_{N}(\tau,\cdot)\right\Vert _{1}d\tau\underset{N\rightarrow\infty}{\rightarrow}0$,
which concludes the proof. 
\begin{flushright}
$\square$
\par\end{flushright}
\begin{rem}
Note that Theorem \ref{the graph limit: main thm high dim} proves convergence  with respect to the $L^1$ norm, whereas Theorem \ref{the graph limit: main thm} proves convergence with respect to the $L^2$ norm. This minor difference is because when $d=2$ the $L^2$ norm of $\frac{1}{\left|s\right|}$ blows up, which prevents getting the inequality ii. in Lemma \ref{L1 contraction estimate}. Notice that for any $d\geq 3$, the $L^2$ approach is perfectly valid. 
\end{rem}
\begin{rem}
Essentially the same argument of Theorem \ref{weak mean field} allows one to conclude the weak mean field limit from the graph limit in higher dimension.  
\end{rem}
\subsubsection*{Acknowledgments}
IBP and JAC were supported by the EPSRC grant number EP/V051121/1.
This work was also supported by the Advanced Grant Nonlocal-CPD (Nonlocal PDEs for Complex Particle Dynamics: Phase Transitions, Patterns and Synchronization) of the European Research Council Executive Agency (ERC) under the European Union's Horizon 2020 research and innovation programme (grant agreement No. 883363). We are grateful for the comments of the anonymous referee.

\end{document}